\documentclass[reqno,12pt]{amsart}

\usepackage{amsmath,amssymb,amsthm}
\usepackage{bbm,mathtools,mathrsfs}
\usepackage[margin=1in]{geometry}
\usepackage[hidelinks,bookmarksnumbered=true,bookmarksdepth=section]{hyperref}
\hypersetup{
pdftitle={Primitive Fourier Coefficients of Hermitian Klingen--Eisenstein Series},
pdfauthor={Nobuki Takeda}
}

\allowdisplaybreaks
\makeatletter
\@namedef{subjclassname@2020}{\textup{2020} Mathematics Subject Classification}
\makeatother

\title{Primitive Fourier Coefficients of Hermitian Klingen--Eisenstein Series}
\author[N. Takeda]{Nobuki Takeda}
\address{Department of Mathematics, Graduate School of Science\\
Kyoto University,
Kyoto 606-8502,
Japan}
\email{takeda.nobuki.z04@kyoto-u.jp}
\subjclass[2020]{Primary 11F55; Secondary 11F30, 11F60}
\keywords{Hermitian modular forms,
Klingen--Eisenstein series,
primitive Fourier coefficients,
pullback formula}
\makeatletter
\let\ams@tocwrite\@tocwrite
\def\@tocwrite#1#2{%
  \ifx\@secnumber\@empty
  \else
    \ams@tocwrite{#1}{#2}%
  \fi
}
\makeatother
\theoremstyle{definition}
\newtheorem{dfn}{Definition}[section]
\newtheorem{rem}[dfn]{Remark}
\newtheorem{ex}[dfn]{Example}
\theoremstyle{plain}
\newtheorem{thm}[dfn]{Theorem}
\newtheorem*{introthm}{Theorem}
\newtheorem{prop}[dfn]{Proposition}
\newtheorem{lem}[dfn]{Lemma}
\newtheorem{cor}[dfn]{Corollary}

\DeclareMathOperator{\tr}{tr}
\DeclareMathOperator{\Tr}{Tr}

\ExplSyntaxOn
\clist_const:Nn \c_alphabet_clist
{A,B,C,D,E,F,G,H,I,J,K,L,M,N,O,P,Q,R,S,T,U,V,W,X,Y,Z,
a,b,c,d,e,f,g,h,i,j,k,l,m,n,o,p,q,r,s,t,u,v,w,x,y,z}
\cs_new:Npn \makealphabet #1 #2
{\clist_map_inline:Nn \c_alphabet_clist {\exp_args:Nc \newcommand {#1##1}{#2{##1}}}}
\makealphabet{bb}{\mathbb}
\makealphabet{bf}{\mathbf}
\makealphabet{cal}{\mathcal}
\makealphabet{frak}{\mathfrak}
\makealphabet{scr}{\mathscr}
\makealphabet{rm}{\mathrm}
\clist_map_inline:nn
{GL,U,Her,M,St,rank,vol,hol,pr,Gal,prim,Kl,cusp,Si,Lie,Aut,covol,Sym,SL,diag,pol,Span,ord} {\exp_args:Nc \newcommand {#1}{\mathrm{#1}}}
\ExplSyntaxOff

\numberwithin{equation}{section}
\begin{document}
\begin{abstract}
  Let \(E/F\) be a CM extension and let \(1\le r<n\).
  We determine primitive Fourier coefficients of Hermitian Klingen--Eisenstein series on \(\U_{n,n}\) induced from cusp forms on \(\U_{r,r}\), at principal congruence level.
  Using the pullback formula for Hermitian Eisenstein series, we express a normalized primitive Fourier coefficient of a Klingen--Eisenstein series as the corresponding primitive Fourier coefficient of a Siegel Eisenstein series multiplied by a Petersson inner product with an explicitly constructed holomorphic automorphic form.
  The automorphic form in the Petersson inner product is a finite sum of products of Hermitian theta series and Siegel Eisenstein series of degree \(r\).
  Under some hypotheses, we also prove the \(p\)-integrality of these normalized primitive Fourier coefficients.
\end{abstract}
\maketitle
\tableofcontents
\section{Introduction}

Klingen--Eisenstein series are a natural source of arithmetic information on Fourier coefficients and congruences between Eisenstein and cuspidal automorphic forms.
Pullback formulas provide a way to relate their Fourier coefficients to those of Siegel Eisenstein series,
following the work of Garrett and Shimura \cite{Garrett1984Pullbacks,Shimura1997EulerProducts}.
For Hermitian modular forms,
the differential operators and pullback formulas used here were established in \cite{Takeda2025DifferentialOperators,Takeda2026PullbackFormula},
and their arithmetic applications were developed in \cite{Takeda2026Congruences}.
Related integrality questions for Klingen--Eisenstein series have been studied in \cite{mizumoto1996on,mizumoto2005congruences}.

The purpose of this paper is to determine primitive Fourier coefficients of Hermitian Klingen--Eisenstein series at principal congruence level over a general CM extension.
For Siegel modular forms,
primitive coefficients can be defined by M\"obius inversion \cite{BoechererRaghavan1988,BoechererSchulzePillot1991}.
Over a general CM field,
the lattices associated with cusp representatives need not be free,
so we work with projective \(\calO_E\)-lattices.
M\"obius inversion then gives primitive Fourier coefficients independent of these choices.
Applied to the pullback formula,
it cancels the contributions from larger lattices and reduces the relevant singular Fourier coefficients to primitive degree-\(n\) Siegel coefficients.

Let \(E/F\) be a CM extension with \(F\) totally real,
let \(1\le r<n\),
and put \(G_m=\U_{m,m}\).
Let \(\frakN\subset\calO_F\) be prime to \(2\frakD_{E/F}\),
and let \(f\in\calS_r(\rho_r,\frakN)\) generate an irreducible cuspidal automorphic representation.
We use the weights and the Hecke character \(\chi\) fixed in Section~\ref{sec:automorphic-setup},
and write \(\mu\) for the common value of \(\kappa_v+\nu_v\).

For a cusp representative \(\gamma\) and \(T\in\Lambda_\gamma(\frakN)_{>0}\),
write \(A_{\Kl}^{\prim}(\gamma,T)\) and \(A_{\Si}^{\prim}(\gamma,T)\) for the primitive Fourier coefficients of the Klingen--Eisenstein series and the normalized degree-\(n\) Siegel Eisenstein series, respectively, as defined in Subsection~\ref{subsec:primitive-formula}.
Let \(C_{n+r,\kappa,\nu}^{(\frakN)}(f;\chi)\) be the constant in the global pullback formula of Theorem~\ref{thm:global-pullback}, and put
\[
  c_f=\frac{C_{n+r,\kappa,\nu}^{(\frakN)}(f;\chi)}{(f^\dagger,f^\dagger)_{G_r}}.
\]
The main result,
proved in Theorem~\ref{thm:main-petersson-formula},
is the following.
\begin{introthm}
  Assume \(\mu\ge n+r\).
  If \(\mu=n+r+1\) and \(F=\bbQ\), assume
  \(\chi_0\ne\chi_{E/F}^{n+r+1}\).
  If \(\mu=n+r\) and \(n=r+1\), assume that
  \(L^{(\frakN)}(1,f,\St)\) is holomorphic and nonzero.
  Then
  \[
    c_f A_{\Kl}^{\prim}(\gamma,T)
    =
    A_{\Si}^{\prim}(\gamma,T)
    \frac{(\Phi_{\gamma,T},f^\dagger)_{G_r}}
    {(f^\dagger,f^\dagger)_{G_r}}.
  \]
  Here \(\Phi_{\gamma,T}\) is the \(V_{\sigma_n}\otimes V_{\rho_r'}\)-valued holomorphic automorphic form defined in Subsection~\ref{subsec:theta-eisenstein}.
\end{introthm}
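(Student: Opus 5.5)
The starting point is the global pullback formula of Theorem~\ref{thm:global-pullback}. It will be applied to the Siegel Eisenstein series of degree \(n+r\), restricted to \(G_n\times G_r\) and paired against \(f^\dagger\) in the \(G_r\)-variable. It expresses \(C_{n+r,\kappa,\nu}^{(\frakN)}(f;\chi)\) times the Klingen--Eisenstein series attached to \(f\) as the Petersson inner product \((\calD E_{n+r}(\cdot,\iota(Z,\cdot)),f^\dagger)_{G_r}\), where \(\calD\) is the holomorphic differential operator producing the \(V_{\sigma_n}\otimes V_{\rho_r'}\)-valued kernel. The hypotheses on \(\mu\) are exactly what make this identity valid at the holomorphic point. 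When \(\mu=n+r+1\) and \(F=\bbQ\), the condition \(\chi_0\ne\chi_{E/F}^{n+r+1}\) avoids a pole of the Eisenstein series. When \(\mu=n+r\) and \(n=r+1\), we need the nonvanishing and holomorphy of \(L^{(\frakN)}(1,f,\St)\) so that we may divide by the normalizing factor. I will therefore assume these and divide by \((f^\dagger,f^\dagger)_{G_r}\), which turns \(C\) into \(c_f\).

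Next I take the Fourier expansion of both sides at the cusp \(\gamma\) in the \(G_n\)-variable. The kernel \(\calD E_{n+r}\) has a Fourier--Jacobi type expansion along the \(G_n\)-variable. Its coefficient at \(T\) is a sum over degree-\((n+r)\) Hermitian matrices \(\begin{pmatrix}T&*\\ *&S\end{pmatrix}\), which may be singular. Using the structure of Siegel Eisenstein coefficients, I would group these by the projective \(\calO_E\)-lattice they generate. Each group should factor as follows: a degree-\(n\) Siegel coefficient at a matrix \(T'\) with \(T\) lying in a lattice \(L\supseteq\) the one of \(T'\), times a theta-type sum over the off-diagonal block, times a degree-\(r\) Siegel Eisenstein series. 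Collecting these gives the \(T\)-th coefficient of the kernel as \(\sum_{L}A_{\Si}(\gamma,T[L])\,\Phi_{\gamma,T,L}\), where the \(\Phi\)'s are products of Hermitian theta series and degree-\(r\) Siegel Eisenstein series. Pairing with \(f^\dagger\) then gives an identity
\[
c_f A_{\Kl}(\gamma,T)=\sum_{L\supseteq L_T}A_{\Si}(\gamma,\cdot)\,\frac{(\Phi_{\gamma,\cdot},f^\dagger)}{(f^\dagger,f^\dagger)}.
\]

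I then apply Möbius inversion over the poset of projective lattices containing the lattice of \(T\), which is the definition of primitive coefficients in Subsection~\ref{subsec:primitive-formula}. The key compatibility is that the contributions coming from larger lattices \(L'\supsetneq L\) in the expansion of the degree-\((n+r)\) coefficient are exactly the transported terms for \(T[L']\). Inverting both sides therefore leaves only the primitive term. This gives \(A_{\Si}^{\prim}(\gamma,T)\) times the inner product with \(\Phi_{\gamma,T}\), where \(\Phi_{\gamma,T}\) is the finite theta--Eisenstein sum of Subsection~\ref{subsec:theta-eisenstein}.

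I expect the main obstacle to be the factorization step. The issue is showing that the singular and non-primitive degree-\((n+r)\) Siegel coefficients restricted to the block \(T\) factor as a degree-\(n\) primitive coefficient of \(T\) times a form on \(G_r\) that is independent of the lattice choice. At the same time this must be compatible with the differential operator \(\calD\) and with non-free lattices. I would first handle this prime by prime using the local Siegel series: the coefficient is primitive at \(T\) when the lattice is maximal. I would then check that the archimedean factor, after applying \(\calD\), depends only on \(T\) and the \(G_r\)-variable.
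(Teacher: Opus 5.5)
\textbf{There is a genuine gap.} Your outline has the right ingredients: the pullback identity, the $T$-th coefficient in the $G_n$-variable, grouping by the lattice generated, and M\"obius inversion. But the step you postpone as the ``main obstacle'' is the whole content of the theorem, and in the form you state it, it does not hold in general.

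You want to write the $T$-th coefficient of the differentiated kernel, as a form on $G_r$, as a sum of theta series times degree-$r$ Eisenstein series. That coefficient collects all indices $R=\begin{pmatrix}T&B\\ B^*&Q\end{pmatrix}\ge0$, of every rank between $n$ and $n+r$. Each is weighted by $P^{(n,r)}_{\mu;\bfk,\bfl}(R)$, and this polynomial in general depends on the block $Q$; compare $C_{T,x}$ in \eqref{eq:degree-two-integral-differential-polynomial}. By contrast, $\Phi_{\gamma,T}$ carries $P_{T,X}=P^{(n,r)}_{\mu;\bfk,\bfl}(R_{T,X})$, i.e.\ $Q=T[X^*]$, against a plain holomorphic Eisenstein series. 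So in general the kernel coefficient also contains terms involving derivatives of the Eisenstein series, and only its pairing with a cusp form has the shape you want.

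The paper never computes this Fourier--Jacobi coefficient. Instead it does the following.
\begin{enumerate}
\item It unfolds the pairing with $f^\dagger$ along the open orbit $\xi_r$.
\item Fourier orthogonality in $U$, in the open-orbit coordinate $C$, and in $V$ (where $f^\dagger$ has only indices $Q'>0$) leaves exactly the rank-$n$ indices $R_{T,X}$ with $\rank_E X=r$.
\item For these, Shimura's singular-coefficient formula applied to \eqref{eq:primitive-quotient-lattice} identifies the coefficient at $\Xi_{M,\delta}$ with $A_{\Si}(t_{n,0}(h_N)\gamma,T)$, where $N=M+X^*L_\delta$.
\end{enumerate}
A prime-by-prime analysis of Siegel series does not supply this. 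Primitivity is a M\"obius construction on $\calP_\gamma(T)$, not a property of a single maximal lattice.

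Your M\"obius step is also inconsistent as written. Inverting $c_fA_{\Kl}=\sum_{L'}A_{\Si}(\cdot)\,(\Phi_{\cdot},f^\dagger)/(f^\dagger,f^\dagger)$ over $\calP_\gamma(T)$ returns the full coefficient $A_{\Si}(\gamma,T)$, not $A_{\Si}^{\prim}(\gamma,T)$. What actually produces the primitive coefficient is that, for fixed $(\delta,X)$, neither $R_{T,X}$ nor the archimedean integral depends on $M$; only the lattice $M+K$ with $K=X^*L_\delta$ does. Lemma~\ref{lem:primitive-reconstruction}, applied to upper intervals, then gives
\[
  \sum_{\substack{M\in\calP_\gamma(T)\\ M+K\in\calP_\gamma(T)}}
  \mu_{\calP_\gamma(T)}(L_\gamma,M)\,
  A_{\Si}\bigl(t_{n,0}(h_{M+K})\gamma,T\bigr)
  =\mathbf1_{K\subseteq L_\gamma}\,A_{\Si}^{\prim}(\gamma,T).
\]
This single identity yields both $A_{\Si}^{\prim}$ and the condition $X^*L_\delta\subseteq L_\gamma$ defining $\calX_{\gamma,\delta}(T)$, and hence the theta series in $\Phi_{\gamma,T}$.

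Finally, since no form-level identity is available, you must still show that the resulting sum equals $(\Phi_{\gamma,T},f^\dagger)_{G_r}$. This requires three further steps:
\begin{enumerate}
\item unfold $E^{[n]}_{\delta,\mathbf i}(\,\cdot\,,u)$ for $\Re u>r$;
\item continue to $u=(\mu-n)/2$ using the rapid decay of $f^\dagger$;
\item collapse $\sum_{\mathbf i}$ to $\mathbbm1_{\calX_{\gamma,\delta}(T)}$ by \eqref{eq:dual-basis-identity}.
\end{enumerate}
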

On each adelic component,
\(\Phi_{\gamma,T}\) is a finite sum of products of a degree-\(r\) Hermitian theta series and a degree-\(r\) Siegel Eisenstein series.
The pullback identity is first established in a region of absolute convergence and then continued to the point required in the theorem.
The Petersson integrals at the continued value are absolutely convergent by the rapid decay of cusp forms.

The formula also has arithmetic consequences.
After fixing integral structures,
we obtain \(p\)-integrality results for normalized
Klingen--Eisenstein Fourier coefficients and,
under additional local hypotheses,
for their primitive Fourier coefficients.
In degree two,
the explicit Fourier expansion of \(\Phi_{\gamma,T}\)
gives a criterion for the relevant Petersson inner product to be nonzero
modulo \(\frakP\).
Together with the formula above,
this can be used to verify the Fourier-coefficient condition occurring in Harder-type congruences \cite{katsurada2026harder}.

Section~\ref{sec:automorphic-setup} fixes the automorphic forms,
Eisenstein series,
and differential operators used in the pullback formula.
Section~\ref{sec:pullback} proves the local and global pullback formulas and establishes the required continuation.
Section~\ref{sec:primitive} defines primitive Fourier coefficients and proves the primitive Fourier coefficient formula.
Section~\ref{sec:arithmetic-applications} gives the integrality results,
the degree-two rank criterion,
and the numerical applications.
\subsection*{Acknowledgements}
The author is grateful to T. Ikeda for his guidance and support,
and to H. Katsurada for kindly sharing his notes on Fourier coefficients of Klingen--Eisenstein lifts in the case of Siegel modular forms.
This work was supported by the Japan Society for the Promotion of Science KAKENHI [grant number JP26KJ1378].
\subsection*{Notation}

Throughout the paper,
\(F\) is a totally real field of degree \(d_F=[F:\bbQ]\),
and \(E/F\) is a CM extension.
Let \(\calO_F\) and \(\calO_E\) be their rings of integers.
We write \(x\mapsto\overline{x}\) for the nontrivial element of
\(\Gal(E/F)\),
and \(\chi_{E/F}\) for the associated quadratic Hecke character of \(F\).
Let \(\frakd_F\) be the different ideal of \(F/\bbQ\),
and let \(\frakD_{E/F}\subset\calO_F\) be the relative discriminant ideal of \(E/F\).
For an extension \(K/k\),
we denote its norm and trace maps by \(N_{K/k}\) and \(\Tr_{K/k}\),
respectively.

For a number field \(K\) and a nonzero prime ideal
\(\frakq\subset\calO_K\),
let \(v_{\frakq}:K^\times\to\bbZ\) be the normalized additive valuation,
so that \(v_{\frakq}(x)=\ord_{\frakq}(x)\),
and put \(v_{\frakq}(0)=+\infty\).
We write \(\calO_{K,\frakq}\) for the localization of \(\calO_K\) at
\(\frakq\).
For \(x,y\in K^\times\), write \(x\sim_{\frakq}y\) if
\(x/y\in\calO_{K,\frakq}^{\times}\).
For a rational prime \(p\),
\(v_p\) denotes the usual \(p\)-adic valuation normalized by \(v_p(p)=1\).

Let \(\bfh\) and \(\bfa\) denote the finite and infinite places of \(F\).
For a place \(v\) of \(F\), put \(E_v=E\otimes_FF_v\).
We write
\(\bbA_F=\bbA_{F,\bfh}\times\bbA_{F,\infty}\) and
\(\bbA_E=\bbA_{E,\bfh}\times\bbA_{E,\infty}\),
where
\(\bbA_{F,\bfh}={\prod'_{v\in\bfh}}F_v\) and
\(\bbA_{F,\infty}=\prod_{v\in\bfa}F_v\),
and similarly for \(E\).
For \(v\in\bfh\),
let \(\frakp_v\) be the corresponding prime ideal of \(\calO_F\),
let \(\calO_{F_v}\) be the ring of integers of \(F_v\),
and put
\(\calO_{E,v}=\calO_E\otimes_{\calO_F}\calO_{F_v}\).
Fix a uniformizer \(\varpi_v\) of \(F_v\),
and put \(q_v=\#(\calO_{F_v}/\varpi_v\calO_{F_v})\).
We use the normalized absolute value \(|\cdot|_v\) on \(F_v\),
with \(|\varpi_v|_v=q_v^{-1}\) at finite places and the usual real
absolute value at infinite places.

For a ring \(R\),
let \(\M_{a,b}(R)\) denote the space of \(a\times b\) matrices over \(R\),
write \(\M_a(R)=\M_{a,a}(R)\),
and let \(I_a\) be the identity matrix of degree \(a\).
For a matrix \(X\) over \(E\), put \(X^*={}^t\overline X\).
For a Hermitian matrix \(T\) and a matrix \(X\) of compatible size,
write \(T[X]=X^*TX\).
We denote by \(\Her_m(E)\) the space of Hermitian matrices in \(\M_m(E)\),
and use \(\tr\) and \(\det\) for matrix trace and determinant.
For an archimedean tuple \(X=(X_v)_{v\in\bfa}\),
put \(\tr_\infty(X)=\sum_{v\in\bfa}\tr(X_v)\).
For \(T\in\Her_m(E)\),
write \(T>0\) (resp. \(T\ge0\)) if \(T_v\) is positive definite
(resp. positive semidefinite) for every \(v\in\bfa\).
For a subset \(X\subset\Her_m(E)\),
put \(X_{>0}=\{T\in X:T>0\}\).

Let
\(\psi_{\bbQ}:\bbA_{\bbQ}/\bbQ\to\bbC^\times\)
be the standard additive character,
and put
\(\psi_F=\psi_{\bbQ}\circ\Tr_{F/\bbQ}:
\bbA_F/F\to\bbC^\times\).
We write \(\psi_{F_v}\) for its local component at \(v\).
Fourier transforms are taken with respect to the self-dual Haar measures
for the specified pairings.
For the archimedean integrals,
we use product Lebesgue measure on \(\Her_m(\bbC)\)
in the diagonal entries and in the real and imaginary parts of the entries
above the diagonal,
and the corresponding product measure over \(v\in\bfa\).
When \(m=r\), we denote this product measure by \(dV\).

\section{Preliminaries}\label{sec:automorphic-setup}

\subsection{Unitary groups and automorphic forms}\label{subsec:unitary-groups}

For a positive integer \(m\),
put
\[
  J_m=\begin{pmatrix}0&-I_m\\ I_m&0\end{pmatrix}.
\]
We denote by \(G_m=\U_{m,m}\) the quasi-split unitary group over \(F\) defined by
\[
  G_m(R) =\left\{g\in\GL_{2m}(E\otimes_F R)\mid g^*J_mg=J_m\right\}
\]
for every \(F\)-algebra \(R\).
For \(Y\in\Her_m(E)\),
put
\[
  n_m(Y)= \begin{pmatrix} I_m & Y \\ 0 & I_m \end{pmatrix}\in G_m(F),
\]
and use the same notation over \(F_v\) and \(\bbA_F\).
For \(v\in\bfh\),
put
\[
  K_{m,v}=G_m(F_v)\cap\GL_{2m}(\calO_{E,v}),
\]
and
\[
  K_{m,\bfh}=\prod_{v\in\bfh}K_{m,v}.
\]
For a nonzero ideal \(\fraka_v\subset\calO_{F_v}\),
define
\[
  K_{m,v}(\fraka_v) =\{k\in K_{m,v}\mid k\equiv I_{2m}\pmod{\fraka_v\calO_{E,v}}\},
\]
and put \(K_{m,v}(\calO_{F_v})=K_{m,v}\).
For a nonzero integral ideal \(\frakN=\prod_{v\in\bfh}\frakp_v^{c_v}\) of \(\calO_F\),
put
\[
  K_m(\frakN) =\prod_{v\in\bfh}K_{m,v}(\frakp_v^{c_v}).
\]
Thus \(K_m(\calO_F)=K_{m,\bfh}\).
For every \(v\in\bfh\),
write \(\frakN_v=\frakp_v^{c_v}\).

At every \(v\in\bfa\),
the corresponding archimedean factor is identified with the real Lie group
\[
  \U(m,m) =\left\{g\in\GL_{2m}(\bbC)\middle|g^*J_mg=J_m\right\}.
\]
Put
\[
  \calH_m =\left\{Z\in\M_m(\bbC)\middle| \frac{Z-Z^*}{2\sqrt{-1}}>0\right\}.
\]
Under the identification \(G_m(\bbA_{F,\infty})\simeq \prod_{v\in\bfa}\U(m,m)\),
the factor at \(v\) acts on \(\calH_m\) by
\[
  \begin{pmatrix}A&B\\C&D\end{pmatrix}Z =(AZ+B)(CZ+D)^{-1}.
\]
Let \(K_{m,v}\) be the stabilizer of \(\sqrt{-1}I_m\).
Then \(K_{m,v}\simeq\U(m)\times\U(m)\).
We denote its complexification by \(K_{m,v}^{\bbC}\simeq\GL_m(\bbC)\times\GL_m(\bbC)\).
Put
\[
  K_{m,\infty} =\prod_{v\in\bfa}K_{m,v}, \qquad K_{m,\infty}^{\bbC} =\prod_{v\in\bfa}K_{m,v}^{\bbC}.
\]
If \(\frakg_{m,v}=\Lie(\U(m,m))\),
we write
\[
  \frakg_{m,v}^{\bbC} =\frakg_{m,v}\otimes_\bbR\bbC =\frakk_{m,v}^{\bbC} \oplus\frakp_{m,v}^{+} \oplus\frakp_{m,v}^{-}
\]
for the complexified Cartan decomposition,
where \(\frakk_{m,v}^{\bbC}=\Lie(K_{m,v})\otimes_{\bbR}\bbC\).
Under the infinitesimal action on \(\calH_m\) at \(\sqrt{-1}I_m\),
the subspaces \(\frakp_{m,v}^{+}\) and \(\frakp_{m,v}^{-}\) correspond to the holomorphic and antiholomorphic tangent directions,
respectively.
Put
\[
  \frakp_{m,\infty}^{-} =\bigoplus_{v\in\bfa}\frakp_{m,v}^{-}.
\]

Throughout this paper,
we consider only dominant integral weights \(\lambda=(\lambda_1,\ldots,\lambda_m)\) of \(\GL_m(\bbC)\) satisfying \(\lambda_1\ge\cdots\ge\lambda_m\ge0\).
We denote by \(\rho_{m,\lambda}\) the irreducible polynomial representation of \(\GL_m(\bbC)\) with highest weight \(\lambda\).
If \(\lambda=(\lambda_1,\ldots,\lambda_s)\) with \(s<m\),
we identify it with \(\lambda=(\lambda_1,\ldots,\lambda_s,0,\ldots,0)\).
We put
\begin{align*}
  |\lambda| & =\sum_{i=1}^m\lambda_i, \\ \ell(\lambda) & =\max\{i\mid \lambda_i\ne0\},
\end{align*}
with \(\ell(0)=0\).
We fix integers \(1\le r<n\).
For every \(v\in\bfa\),
we fix positive integers \(\kappa_v,\nu_v\) and dominant integral weights \(\bfk_v,\bfl_v\) of \(\GL_r(\bbC)\) such that \(\kappa_v+\nu_v=\mu\) and \(\ell(\bfk_v)+\ell(\bfl_v)\le\mu\),
where the same integer \(\mu\) is used for all \(v\in\bfa\).
Write \(\bfk_v=(k_{v,1},\ldots,k_{v,r})\) and \(\bfl_v=(l_{v,1},\ldots,l_{v,r})\).
For \(m\ge r\),
we regard \(\bfk_v\) and \(\bfl_v\) as dominant integral weights of \(\GL_m(\bbC)\) by appending zeros.
We write \(\kappa=(\kappa_v)_{v\in\bfa}\),
\(\nu=(\nu_v)_{v\in\bfa}\),
\(\bfk=(\bfk_v)_{v\in\bfa}\),
and \(\bfl=(\bfl_v)_{v\in\bfa}\).

For \(m\ge r\),
put
\[
  \sigma_m = \boxtimes_{v\in\bfa} ( \rho_{m,\bfk_v}\boxtimes\rho_{m,\bfl_v} ), \qquad \sigma_m' = \boxtimes_{v\in\bfa} ( \rho_{m,\bfl_v}\boxtimes\rho_{m,\bfk_v} ).
\]
For tuples \(\alpha=(\alpha_v)_{v\in\bfa}\) and \(\beta=(\beta_v)_{v\in\bfa}\) of integers,
put
\[
  \tau_{\alpha,\beta} = \boxtimes_{v\in\bfa} ( {\det}^{\alpha_v}\boxtimes{\det}^{\beta_v} ).
\]
In particular,
we use \(\tau_{\kappa,\nu}\) for the fixed weight representation above,
and define
\[
  \rho_m = \tau_{\kappa,\nu}\otimes\sigma_m, \qquad \rho_m' = \tau_{\kappa,\nu}\otimes\sigma_m'.
\]
Since \(\tau_{\kappa,\nu}\) is one-dimensional,
we identify \(V_{\rho_m}\) with \(V_{\sigma_m}\) and \(V_{\rho_m'}\) with \(V_{\sigma_m'}\) as vector spaces.
The factor \(\tau_{\kappa,\nu}\) is retained in all transformation laws.
For \(v\in\bfa\),
we write \(\sigma_{m,v}\),
\(\sigma'_{m,v}\),
\(\rho_{m,v}\),
and \(\rho'_{m,v}\) for the \(v\)-components of these representations.

Fix an unramified unitary Hecke character \(\chi:E^\times\backslash\bbA_E^\times\rightarrow\bbC^\times\) with infinity type
\[
  \chi_{\bfa}((z_v)_{v\in\bfa}) =\prod_{v\in\bfa} |z_v|^{-\kappa_v+\nu_v} z_v^{\kappa_v-\nu_v}.
\]
Recall that \(\chi_0=\chi|_{\bbA_F^\times}\).
At the archimedean places,
\(\chi_0\) is a sign character.
Since \(\chi\) is unramified at the finite places,
it follows that \(\chi_0\) is of finite order.

For a finite-dimensional algebraic representation \((\rho,V_\rho)\) of \(K_{m,\infty}^{\bbC}\)
and a compact open subgroup \(K^\bfh\subset G_m(\bbA_{F,\bfh})\),
let \(\calA_m(\rho,K^\bfh)\) denote the space of holomorphic Hermitian automorphic forms of weight \(\rho\) and level \(K^\bfh\),
and let \(\calS_m(\rho,K^\bfh)\) be its cuspidal subspace.
Explicitly,
an element is left \(G_m(F)\)-invariant and right \(K^\bfh\)-invariant,
satisfies \(f(gk_\infty)=\rho(k_\infty)^{-1}f(g)\) for \(k_\infty\in K_{m,\infty}\),
and is annihilated by \(\frakp_{m,\infty}^{-}\).
We also impose moderate growth and,
in degree one,
holomorphy at the cusps.

For a nonzero integral ideal \(\frakN\subset\calO_F\),
we abbreviate
\[
  \calA_m(\rho,\frakN) =\calA_m(\rho,K_m(\frakN)), \qquad \calS_m(\rho,\frakN) =\calS_m(\rho,K_m(\frakN)).
\]

Normalize Haar measure \(dg_{m,v}\) on \(G_m(F_v)\) by \(\vol(K_{m,v})=1\) for \(v\in\bfh\).
For \(v\in\bfa\),
use Haar probability measure on \(K_{m,v}\) and quotient measure \(\det(Y_v)^{-2m}\,dX_v\,dY_v\) on \(G_m(F_v)/K_{m,v}\simeq\calH_m\).
Let \(d\bar g_m\) be the measure induced by \(dg_m=\prod_v dg_{m,v}\) on \(G_m(F)\backslash G_m(\bbA_F)\),
with counting measure on \(G_m(F)\).

For every finite-dimensional algebraic representation \(\rho\) of \(K_{m,\infty}^{\bbC}\) occurring below,
fix a \(K_{m,\infty}\)-invariant positive definite Hermitian inner product
\(
\langle\ ,\ \rangle_\rho
\)
on \(V_\rho\),
linear in the first variable and conjugate-linear in the second.
For the bideterminant realizations occurring in the differential operator and the pullback formula,
we use the inner product
\[
  \langle P,Q\rangle
  =\left.P(\partial_U,\partial_V)\overline{Q}(U,V)\right|_{U=V=0},
\]
and its tensor product over the archimedean places.
This normalization agrees with the pairing used in \cite[Section~6]{Takeda2025DifferentialOperators}.
For the paired weights \(\rho_m\) and \(\rho_m'\),
we choose these inner products compatibly with the conjugate-linear identification obtained by complex conjugation and interchange of the two \(\GL_m(\bbC)\)-factors.
Whenever the integral converges,
for holomorphic automorphic forms \(F_1,F_2\) on \(G_m\) of the same weight \(\rho\),
put
\[
  (F_1,F_2)_{G_m} =\int_{G_m(F)\backslash G_m(\bbA_F)} \langle F_1(g),F_2(g)\rangle_\rho \,d\bar g_m.
\]
In particular,
the integral converges absolutely if one of the two forms is cuspidal.

Put
\[
  \vartheta_m = \begin{pmatrix} 0 & I_m \\ I_m & 0 \end{pmatrix}.
\]
For \(g\in G_m(\bbA_F)\),
put
\[
  g^\natural = \vartheta_m g\vartheta_m.
\]
For a Hecke character \(\zeta\) of \(E\) whose restriction to \(\det G_m(\bbA_F)\) is unitary,
define
\[
  \varphi^{\dagger,\zeta}(g) = \zeta(\det g)\,\overline{\varphi(g^\natural)}.
\]
For \(\zeta=\chi^{-1}\),
this gives conjugate-linear maps
\[
  \calA_m(\rho_m,\frakN) \longrightarrow \calA_m(\rho_m',\frakN),
\]
\[
  \calS_m(\rho_m,\frakN) \longrightarrow \calS_m(\rho_m',\frakN).
\]
Since \(|\chi(\det g)|=1\) and \(g\mapsto g^\natural\) preserves the Petersson measure,
the map \(\dagger,\chi^{-1}\) is antiunitary for the compatible inner products fixed above:
\[
  (\varphi_1^{\dagger,\chi^{-1}},\varphi_2^{\dagger,\chi^{-1}})_{G_m} = (\varphi_2,\varphi_1)_{G_m}.
\]
\subsection{Differential operators}\label{subsec:differential-operators}

We recall the differential operator constructed in \cite{Takeda2025DifferentialOperators,Takeda2026PullbackFormula} for the pullback from \(G_{n+r}\) to \(G_n\times G_r\).

Let
\begin{align*}
  \iota_{n,r}:G_n\times G_r & \longrightarrow G_{n+r} \\ \left( \begin{pmatrix}A&B\\ C&D\end{pmatrix}, \begin{pmatrix}a&b\\ c&d\end{pmatrix} \right) & \longmapsto \begin{pmatrix} A & 0 & B & 0 \\ 0 & a & 0 & b \\ C & 0 & D & 0 \\ 0 & c & 0 & d \end{pmatrix}
\end{align*}
be the standard diagonal embedding.
For \(v\in\bfa\),
under the standard identification \(\frakp_{n+r,v}^{+}\simeq\M_{n+r}(\bbC)\),
let
\[
  \pi_v^+ = (\pi^+_{v,ij})_{1\le i,j\le n+r}
\]
denote the corresponding matrix of right-invariant differential operators associated with the holomorphic tangent directions.
Since \(\frakp_{n+r,v}^{+}\) is abelian,
the operators \(\pi^+_{v,ij}\) commute.
Thus a polynomial
\[
  P: \M_{n+r}(\bbC) \longrightarrow V_{\sigma_{n,v}}\otimes V_{\sigma'_{r,v}}
\]
determines a vector-valued differential operator \(P(\pi_v^+)\).

We require \(P(\pi_v^+)\) to satisfy the following covariance relation:
\[
  P(\pi_v^+)F(\iota_{n,r}(g_1k_1,g_2k_2)) = ( \rho_{n,v}(k_1)^{-1} \otimes \rho'_{r,v}(k_2)^{-1} ) P(\pi_v^+)F(\iota_{n,r}(g_1,g_2))
\]
for every scalar-valued holomorphic function \(F\) on \(G_{n+r}(F_v)\) of \(K_{n+r,v}\)-type \({\det}^{\kappa_v}\boxtimes{\det}^{\nu_v}\),
and all \(g_1\in G_n(F_v)\),
\(g_2\in G_r(F_v)\),
\(k_1\in K_{n,v}\),
and \(k_2\in K_{r,v}\).

The following polynomial criterion characterizes this covariance relation.
Let \(X_1,Y_1\in\M_{n,\mu}(\bbC)\) and \(X_2,Y_2\in\M_{r,\mu}(\bbC)\).
For \(P\) as above,
put
\[
  \widetilde P(X_1,X_2,Y_1,Y_2) = P\!\left( \begin{pmatrix} X_1{}^tY_1 & X_1{}^tY_2 \\ X_2{}^tY_1 & X_2{}^tY_2 \end{pmatrix} \right).
\]
A polynomial \(Q(X,Y)\) in the entries of two \(a\times\mu\) matrices is called \emph{pluriharmonic} if
\[
  \sum_{s=1}^{\mu} \frac{\partial^2Q} {\partial X_{is}\partial Y_{js}} =0
\]
for every \(1\le i,j\le a\).

See \cite[Proposition~3.2]{Takeda2025DifferentialOperators}.

\begin{prop}\label{prop:differential-criterion}
  Fix \(v\in\bfa\), and suppose that \(\ell(\bfk_v)\le r\), \(\ell(\bfl_v)\le r\), and \(\ell(\bfk_v)+\ell(\bfl_v)\le\mu\).
  Let \(P:\M_{n+r}(\bbC)\rightarrow V_{\sigma_{n,v}}\otimes V_{\sigma'_{r,v}}\) be a polynomial.
  For \(X_1,Y_1\in\M_{n,\mu}(\bbC)\), \(X_2,Y_2\in\M_{r,\mu}(\bbC)\),
  put
  \[
    \widetilde P(X_1,X_2,Y_1,Y_2)
    =
    P\!\left(
    \begin{pmatrix}
        X_1{}^tY_1 & X_1{}^tY_2 \\
        X_2{}^tY_1 & X_2{}^tY_2
      \end{pmatrix}
    \right).
  \]
  Then the covariance relation above holds if and only if the following two conditions hold:
  \begin{enumerate}
    \item
          \(\widetilde P\) is pluriharmonic in each of the pairs
          \((X_1,Y_1)\) and \((X_2,Y_2)\).

    \item
          For all
          \(A_1,B_1\in\GL_n(\bbC)\) and
          \(A_2,B_2\in\GL_r(\bbC)\),
          \[
            P\!\left(
            \begin{pmatrix}
                A_1 & 0   \\
                0   & A_2
              \end{pmatrix}
            T
            \begin{pmatrix}
                {}^tB_1 & 0       \\
                0       & {}^tB_2
              \end{pmatrix}
            \right)
            =
            \bigl(
            \sigma_{n,v}(A_1,B_1)
            \otimes
            \sigma'_{r,v}(A_2,B_2)
            \bigr)P(T).
          \]
  \end{enumerate}
\end{prop}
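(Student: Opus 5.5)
We would prove Proposition~\ref{prop:differential-criterion} by splitting the covariance relation into its two ingredients: holomorphy in the \(G_n\times G_r\)-variables, which yields condition (1), and equivariance under \(K_{n,v}\times K_{r,v}\), which yields condition (2). First we pass to the unbounded realization. A holomorphic \(F\) of \(K_{n+r,v}\)-type \({\det}^{\kappa_v}\boxtimes{\det}^{\nu_v}\) corresponds to a holomorphic function \(f\) on \(\calH_{n+r}\) satisfying the scalar automorphy factor \(\det(CZ+D)^{\kappa_v}\det(C\overline{Z}+\overline{D})^{\nu_v}\) (up to the usual convention). Under this correspondence, \(P(\pi_v^+)\) becomes the operator \(P(\partial_Z)\), corrected by the automorphy factor.

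For the equivariance part we use the action of \(K_{n,v}\times K_{r,v}\) through \(\iota_{n,r}\), which is by the Levi subgroup \(\GL_n(\bbC)\times\GL_n(\bbC)\times\GL_r(\bbC)\times\GL_r(\bbC)\) after complexification. This Levi acts on \(\frakp^+_{n+r,v}\simeq\M_{n+r}(\bbC)\) by \(T\mapsto \diag(A_1,A_2)\,T\,{}^t\diag(B_1,B_2)\). Via \(\mathrm{Ad}\), right translation of \(P(\pi^+_v)F\) by \(k\) intertwines \(P\) with \(P\circ\mathrm{Ad}(k)\). The fact that \(F\) transforms by a character is absorbed into \(\tau_{\kappa,\nu}\), and it appears identically on both sides. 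Hence the \(K\)-equivariance part of the covariance relation is equivalent to condition (2), first for \(k\) in the compact group and then, by Zariski density and polynomiality, for the complexification.

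The harder half is to show that \(P(\pi^+_v)F\circ\iota_{n,r}\) is again holomorphic, i.e.\ annihilated by \(\frakp^-_{n,v}\oplus\frakp^-_{r,v}\), if and only if (1) holds. The plan is to commute \(\frakp^-\) past \(P(\pi^+)\), using \([\frakp^-,\frakp^+]\subset\frakk^{\bbC}\) together with \(\frakp^-F=0\). This produces first-order terms in which a \(\frakk\)-element acts on \(F\) by the scalar \((\kappa_v,\nu_v)\), as well as second-order contractions among the \(\pi^+_{ij}\).

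Restricted to the diagonal blocks, the vanishing of these terms gives a system of second-order PDEs for \(P\). The main obstacle is identifying this system with pluriharmonicity of \(\widetilde P\). The approach we would try first is the classical one of Ibukiyama and of the author's earlier work. One checks, via the chain rule, that for \(T=X\,{}^tY\) with \(X,Y\) of width \(\mu\), the Laplacian-type operator \(\sum_s\partial_{X_{is}}\partial_{Y_{js}}\) applied to \(P(X\,{}^tY)\) equals the transformed operator arising above. In this computation the constant \(\mu=\kappa_v+\nu_v\) enters exactly as the \(\frakk\)-eigenvalue. One then tests on a generating family of \(F\), for instance \(F\) attached to \(\det(Z)^{-\kappa}\)-type kernels or the functions \(\det(\ldots)\), so that the identity on \(F\) becomes an identity of polynomials.

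The hypothesis \(\ell(\bfk_v)+\ell(\bfl_v)\le\mu\) guarantees that \(\widetilde P\mapsto P\) is injective on the relevant representation spaces, so that no information is lost in passing to the \((X,Y)\)-variables; this is what makes the equivalence two-sided. Since this computation is carried out in \cite[Proposition~3.2]{Takeda2025DifferentialOperators}, we would ultimately cite it after verifying that its normalizations match those fixed here.
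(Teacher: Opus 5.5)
This matches the paper, which gives no argument of its own and simply cites \cite[Proposition~3.2]{Takeda2025DifferentialOperators}. Your sketch of what that result proves is sound: condition (2) comes from \(K\)-equivariance via \(\mathrm{Ad}\) and Zariski density, condition (1) comes from commuting \(\frakp^-\) past \(P(\pi_v^+)\) and the chain rule for \(T=X\,{}^tY\), and the length hypothesis gives injectivity of \(P\mapsto\widetilde P\) on equivariant \(P\). You were also right to build holomorphy of \(P(\pi_v^+)F\circ\iota_{n,r}\) into the covariance requirement, since the \(K\)-equivariance displayed in the paper is by itself equivalent to condition (2) alone, and condition (1) is what the extra holomorphy contributes.
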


For Siegel modular forms,
the corresponding covariance problem is Case~(I) in~\cite{Ibukiyama1999DifferentialOperators};
see also~\cite{Ibukiyama2020GenericDifferentialOperators} for the general construction by invariant pluriharmonic polynomials.
The use of such differential operators in pullback formulas goes back to B\"ocherer--Satoh--Yamazaki~\cite{BoechererSatohYamazaki1992}.
For the present Hermitian setting,
we use the normalized choice in \cite[Theorem~5.13]{Takeda2025DifferentialOperators};
uniqueness up to scalar follows from \cite[Proposition~5.4]{Takeda2025DifferentialOperators}.
For each \(v\in\bfa\),
let
\[
  P_{\mu;\bfk_v,\bfl_v}^{(n,r)}: \M_{n+r}(\bbC) \longrightarrow V_{\sigma_{n,v}}\otimes V_{\sigma'_{r,v}}
\]
be the resulting normalized polynomial satisfying the two conditions above.

For
\(R=(R_v)_{v\in\bfa}\in\prod_{v\in\bfa}\M_{n+r}(\bbC)\),
put
\[
  P_{\mu;\bfk,\bfl}^{(n,r)}(R) = \bigotimes_{v\in\bfa} P_{\mu;\bfk_v,\bfl_v}^{(n,r)}(R_v),
\]
and define
\[
  \bbD_{\bfk,\bfl} = \bigotimes_{v\in\bfa} P_{\mu;\bfk_v,\bfl_v}^{(n,r)}(\pi_v^+).
\]
Then
\[
  (2\pi\sqrt{-1})^{-d_{\bfk,\bfl}} \bbD_{\bfk,\bfl} \exp( 2\pi\sqrt{-1} \tr_\infty(RZ) ) = P_{\mu;\bfk,\bfl}^{(n,r)}(R) \exp( 2\pi\sqrt{-1} \tr_\infty(RZ) ),
\]
where we put \(d_{\bfk,\bfl}=\sum_{v\in\bfa}(|\bfk_v|+|\bfl_v|)\).

\subsection{Siegel Eisenstein series}\label{subsec:siegel-eisenstein-series}

We define the Siegel Eisenstein series with the normalization of \cite[Section~4.2]{Takeda2026Congruences} and \cite[\S~16.5]{Shimura2000Arithmeticity}.

We use the convention \(G_0=\{1\}\).
For \(0\le q<m\),
let \(U_{m,q}\) be the unipotent radical whose \(F\)-points are
\[
  U_{m,q}(F) = \left\{ \begin{pmatrix} I_{m-q} & * & * & * \\ 0 & I_q & * & 0 \\ 0 & 0 & I_{m-q} & 0 \\ 0 & * & * & I_q \end{pmatrix} \in G_m(F) \right\}.
\]
For \(0\le q<m\), define the standard embeddings
\[
  t_{m,q}:\GL_{m-q}(E)\longrightarrow G_m(F)
\]
and
\[
  s_{m,q}:G_q(F)\longrightarrow G_m(F)
\]
by
\[
  t_{m,q}(A) = \begin{pmatrix} A & 0 & 0 & 0 \\ 0 & I_q & 0 & 0 \\ 0 & 0 & (A^*)^{-1} & 0 \\ 0 & 0 & 0 & I_q \end{pmatrix}
\]
and
\[
  s_{m,q}\!\left( \begin{pmatrix} a & b \\ c & d \end{pmatrix} \right) = \begin{pmatrix} I_{m-q} & 0 & 0 & 0 \\ 0 & a & 0 & b \\ 0 & 0 & I_{m-q} & 0 \\ 0 & c & 0 & d \end{pmatrix}.
\]
Let \(P_{m,q}\) be the standard parabolic subgroup with
\[
  P_{m,q}(F) = s_{m,q}(G_q(F)) t_{m,q}(\GL_{m-q}(E)) U_{m,q}(F).
\]
We use the same notation for the corresponding maps and subgroups over \(F_v\) and \(\bbA_F\).
The parabolic \(P_{m,0}\) is the Siegel parabolic.
For \(0<r<n\),
\(P_{n,r}\) is the Klingen parabolic.
For \(q=m\),
we put \(P_{m,m}=G_m\),
\(U_{m,m}=\{1\}\),
let \(t_{m,m}\) be the trivial map from \(\GL_0(E)=\{1\}\),
and let \(s_{m,m}\) be the identity map on \(G_m\).
Finally,
put
\[
  w_m=J_m = \begin{pmatrix} 0 & -I_m \\ I_m & 0 \end{pmatrix} \in G_m(F).
\]

Let \(\chi\) be the Hecke character fixed above.
For each place \(v\) of \(F\),
we regard its local component as a character of \(E_v^\times=\prod_{w\mid v}E_w^\times\) by
\[
  \chi_v((x_w)_{w\mid v}) = \prod_{w\mid v}\chi_w(x_w).
\]

For \(v\in\bfa\) and \(g_v =
\begin{pmatrix}
  A_v & B_v \\
  C_v & D_v
\end{pmatrix}
\in G_m(F_v)\simeq\U(m,m)\),
put
\[
  \delta_m(g_v) = \det(C_v\sqrt{-1}I_m+D_v).
\]

For each place \(v\),
let \(I_{m,v}(s,\chi_v)\) be the Siegel degenerate principal series of \(G_m(F_v)\) induced from \(P_{m,0}(F_v)\),
in the normalization of \cite{Shimura1997EulerProducts,Shimura2000Arithmeticity}.
Thus \(I_{m,v}(s,\chi_v)\) is realized as the space of smooth functions \(\Phi(\,\cdot\,,s):G_m(F_v)\rightarrow\bbC\) satisfying
\[
  \Phi(t_{m,0}(A)ug,s) = |\det(A^*A)|_v^s \chi_v(\det A^*) \Phi(g,s)
\]
for \(A\in\GL_m(E_v)\),
\(u\in U_{m,0}(F_v)\),
and \(g\in G_m(F_v)\).

For \(v\in\bfh\) and a nonzero ideal \(\fraka_v\subset\calO_{F_v}\),
let \(\varepsilon_{m,v}^{\fraka_v}(\,\cdot\,,s;\chi_v)\in I_{m,v}(s,\chi_v)\) be the right \(K_{m,v}(\fraka_v)\)-invariant section supported on \(P_{m,0}(F_v)K_{m,v}(\fraka_v)\) and normalized by
\[
  \varepsilon_{m,v}^{\fraka_v}(1,s;\chi_v)=1.
\]

For \(v\in\bfa\),
we take the holomorphic section
\[
  \varepsilon_{m,v}^{\hol}(g_v,s;\chi_v) = \det(g_v)^{\nu_v} |\delta_m(g_v)|^{\kappa_v+\nu_v-2s} \delta_m(g_v)^{-\kappa_v-\nu_v}.
\]
Put
\[
  \varepsilon_{m,\infty}^{\hol}(g_\infty,s;\chi_{\bfa}) = \prod_{v\in\bfa} \varepsilon_{m,v}^{\hol}(g_v,s;\chi_v).
\]

Fix a nonzero integral ideal \(\frakN\subset\calO_F\) prime to \(2\frakD_{E/F}\).

For \(m\ge1\),
define the global section
\[
  \varepsilon_{m,\kappa,\nu,\frakN}(g,s;\chi) = \prod_{v\in\bfh} \varepsilon_{m,v}^{\frakN_v}(g_v,s;\chi_v) \prod_{v\in\bfa} \varepsilon_{m,v}^{\hol}(g_v,s;\chi_v).
\]
Define the corresponding Siegel Eisenstein series by
\[
  E_{m,\kappa,\nu,\frakN}(g,s;\chi) = \sum_{\gamma\in P_{m,0}(F)\backslash G_m(F)} \varepsilon_{m,\kappa,\nu,\frakN}(\gamma g,s;\chi).
\]
At \(s=\mu/2\),
write
\[
  E_{m,\kappa,\nu,\frakN}(g;\chi) = E_{m,\kappa,\nu,\frakN}(g,\mu/2;\chi).
\]
The series converges absolutely and locally uniformly for \(\Re(s)>m\).
We use its continuation to \(s=\mu/2\) when this point lies outside the convergence region.
By \cite[Proposition~17.7]{Shimura2000Arithmeticity},
the specialization attached to the holomorphic section is holomorphic when \(\mu\ge m\),
except possibly when \(\mu=m+1\), \(F=\bbQ\), and \(\chi_0=\chi_{E/F}^{m+1}\).
The same definition and regularity statement will be used for the Siegel Eisenstein series appearing below when one of the two scalar weights is zero.
The exceptional case is excluded in Theorem~\ref{thm:global-pullback};
all specializations used below are therefore regular by this proposition.
\begin{rem}\label{rem:siegel-fourier-coefficients}
  The Fourier coefficients of \(E_{m,\kappa,\nu,\frakN}\) factor into local Fourier integrals;
  see \cite[\S\S18--19]{Shimura1997EulerProducts} and \cite[\S2.2]{Eischen2015EisensteinMeasure}.
  At almost all finite places these are given by the local Hermitian Siegel series,
  with explicit formulas in \cite{Hironaka1998LocalZeta,Hironaka1999Spherical}.
  At the remaining places they depend on the chosen sections.
  We do not need a uniform formula for these factors.
\end{rem}
\section{Pullback formula}\label{sec:pullback}

We establish the pullback formula for principal congruence subgroups,
including an explicit determination of the proportionality constant.

From now on,
assume that \(f\) generates an irreducible cuspidal automorphic representation \(\pi_f=\bigotimes_v'\pi_{f,v}\).
Write \(f^\dagger=f^{\dagger,\chi^{-1}}\),
with the normalization fixed above.
We express the pullback pairing in \cite[\S~5.1]{Takeda2026PullbackFormula} using this conjugate-linear convention,
with the kernel in the first variable of the Petersson product.

For a finite place \(v\nmid\frakN\),
put
\[
  L_v(s,f,\St)=L_v(s,\pi_{f,v},\St).
\]
Here \(\St\) denotes the standard \(L\)-function,
with the local normalization used in \cite[\S~5.3.1]{Takeda2026PullbackFormula}.
Define
\[
  L^{(\frakN)}(s,f,\St)=\prod_{\substack{v\in\bfh\\v\nmid\frakN}}L_v(s,f,\St).
\]
For a Hecke character \(\lambda=\bigotimes_v\lambda_v\) of \(F\),
write
\[
  L_{F,v}(s,\lambda_v)=
  \begin{cases}
    (1-\lambda_v(\varpi_v)q_v^{-s})^{-1}, & \lambda_v\text{ unramified}, \\
    1,                                    & \lambda_v\text{ ramified},
  \end{cases}
\]
for \(v\in\bfh\),
and put
\[
  L^{(\frakN)}(s,\lambda)=\prod_{\substack{v\in\bfh\\v\nmid\frakN}}L_{F,v}(s,\lambda_v).
\]
These products are initially defined in a right half-plane and elsewhere by meromorphic continuation.

For a place \(v\) and \(\varphi_v\in\pi_{f,v}\),
put
\[
  \varphi_v^\dagger(g)
  =\varphi_v^{\dagger,\chi_v^{-1}}(g)
  =\chi_v(\det g)^{-1}\,\overline{\varphi_v(g^\natural)}.
\]

By \cite[Fact~19, Proposition~20]{Takeda2026PullbackFormula},
after unfolding,
only the open orbit contributes,
while all other orbits vanish by cuspidality.

\subsection{Local pullback integrals}\label{subsec:local-pullback}

The spherical and archimedean integrals are evaluated in \cite[\S\S~5.3.1 and~5.3.3]{Takeda2026PullbackFormula}.
We recall these formulas and compute the integrals at the primes dividing \(\frakN\).

Let \(v\in\bfh\),
let \(\fraka_v\subset\calO_{F_v}\) be a nonzero ideal,
and let \(\varphi_v\in\pi_{f,v}^{K_{r,v}(\fraka_v)}\).
Define \(\Psi_{n,r,v}^{\fraka_v}(\varphi_v;\,\cdot\,,s;\chi_v)\) to be the local Klingen section supported on \(P_{n,r}(F_v)K_{n,v}(\fraka_v)\) and satisfying
\[
  \Psi_{n,r,v}^{\fraka_v}( \varphi_v; t_{n,r}(A)u\,s_{n,r}(h)k, s;\chi_v ) = |\det(A^*A)|_v^s \chi_v(\det A^*) \varphi_v(h)
\]
for \(A\in\GL_{n-r}(E_v)\),
\(u\in U_{n,r}(F_v)\),
\(h\in G_r(F_v)\),
and \(k\in K_{n,v}(\fraka_v)\).

Put
\[
  e_r = \begin{pmatrix} 0_{n-r, r} \\ I_r \end{pmatrix} \in\M_{n,r}(\calO_F),
\]
and take
\[
  \xi_r = \begin{pmatrix} I_n & 0 & 0 & 0 \\ 0 & I_r & 0 & 0 \\ 0 & e_r & I_n & 0 \\ {}^te_r & 0 & 0 & I_r \end{pmatrix} \in G_{n+r}(F)
\]
as a representative of the open orbit for the right action of \(\iota_{n,r}(G_n\times G_r)\) on \(P_{n+r,0}\backslash G_{n+r}\).

See \cite[Proposition~24]{Takeda2026PullbackFormula}.

\begin{prop}\label{prop:unramified-local-pullback}
  Let \(v\) be a finite place at which \(\pi_{f,v}\) and the local Siegel section are unramified,
  and let \(\varphi_v\in\pi_{f,v}^{K_{r,v}}\).
  Then,
  for every \(g_1\in G_n(F_v)\),
  \begin{align*}
     & \int_{G_r(F_v)} \varepsilon_{n+r,v}^{\calO_{F_v}} \left( \xi_r\iota_{n,r}(g_1,g_2), \frac{\mu}{2};\chi_v \right) \overline{\varphi_v^\dagger(g_2)} \,dg_2 \\ & \quad= L_v\!\left(\frac{\mu-2r+1}{2},\pi_{f,v}, \St\right) \prod_{a=0}^{2r-1}L_{F,v}(\mu-a,\chi_{0,v}\chi_{E/F,v}^{a})^{-1} \Psi_{n,r,v}^{\calO_{F_v}} \left( \varphi_v; g_1, \frac{\mu}{2};\chi_v \right).
  \end{align*}
\end{prop}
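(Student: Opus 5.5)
The plan is the standard doubling-method computation, reduced to the known unramified doubling zeta integral. First I would study the dependence on \(g_1\). Write \(Z(g_1)\) for the left-hand side. Since \(\varepsilon^{\calO_{F_v}}_{n+r,v}\) is right \(K_{n+r,v}\)-invariant, \(Z\) is right \(K_{n,v}\)-invariant.

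Next I would check that \(Z\) has the left \(P_{n,r}(F_v)\)-equivariance of a Klingen section. Take \(p=t_{n,r}(A)u\,s_{n,r}(h)\). Then \(\xi_r\iota_{n,r}(p,1)\) can be rewritten as \(q\,\xi_r\iota_{n,r}(1,h')\), where \(q\in P_{n+r,0}(F_v)\) and \(h'\) is essentially \(h^\natural\) (or \(h\)). This uses the defining property of the open orbit: its stabilizer is the "diagonal" copy of \(G_r\), together with \(U_{n,r}\) and the Levi part. The factor \(q\) contributes \(|\det(A^*A)|_v^{\mu/2}\chi_v(\det A^*)\). After the substitution \(g_2\mapsto h'^{-1}g_2\), the dagger twist \(\varphi_v^\dagger\) turns the \(h'\)-translate back into \(\varphi_v(h)\).

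It follows that \(g_1\mapsto Z(g_1)\) lies in the \(K_{n,v}\)-fixed part of the Klingen induced representation \(\mathrm{Ind}_{P_{n,r}}(|\cdot|^{\mu/2}\chi_v\otimes\pi_{f,v})\). Its value at \(g_1=1\) is the vector
\[
  \int_{G_r(F_v)}\varepsilon(\xi_r\iota_{n,r}(1,g_2),\mu/2)\,\overline{\varphi_v^\dagger(g_2)}\,dg_2\in\pi_{f,v},
\]
viewed as a function of \(h\). By the Iwasawa decomposition \(G_n(F_v)=P_{n,r}(F_v)K_{n,v}\), such a function is determined by its restriction to \(s_{n,r}(G_r)\). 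That restriction is a \(K_{r,v}\)-fixed vector of \(\pi_{f,v}\), hence a scalar multiple \(c\,\varphi_v\) by uniqueness of the spherical vector. So \(Z=c\,\Psi^{\calO_{F_v}}_{n,r,v}(\varphi_v;\cdot,\mu/2;\chi_v)\).

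It remains to compute \(c\). Evaluating at \(g_1=s_{n,r}(h)\) and using the equivariance reduces \(c\) to the doubling zeta integral on \(G_r\times G_r\). Concretely, \(\xi_r\iota_{n,r}(s_{n,r}(h),g_2)\) lies in \(P_{n+r,0}\cdot\xi^{(r)}\iota_{r,r}(h,g_2)\), where \(\xi^{(r)}\) is the doubling element, and the extra \(n-r\) variables only change the exponent by a shift. The result is
\[
  c=\int_{G_r(F_v)}\varepsilon^{\calO}_{2r,v}\bigl(\xi^{(r)}\iota_{r,r}(1,g),s'\bigr)\,\langle\pi(g)\varphi_v,\varphi_v\rangle/\langle\varphi_v,\varphi_v\rangle\,dg .
\]
This is the unramified Piatetski-Shapiro--Rallis integral. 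Its value is \(L_v(s'+\tfrac12,\pi_{f,v},\St)\) divided by the normalizing product \(\prod_{a=0}^{2r-1}L_{F,v}(2s'+r-a,\dots)\), with \(2s'=\mu-r\) (up to the shift). This gives exactly \(L_v(\frac{\mu-2r+1}{2})\prod_{a}L_{F,v}(\mu-a,\chi_{0,v}\chi_{E/F,v}^a)^{-1}\).

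The main obstacle is the bookkeeping in the reduction step. One has to match the \(\chi_v\) factors, the \(\natural\)/dagger conventions, and the shift in \(s\) coming from the modulus character between degree \(n+r\) and \(2r\). I would handle this by relying on \cite[Proposition~24]{Takeda2026PullbackFormula} and the unramified computation of Li for the unitary doubling integral, checking each normalization against the Satake parameters.
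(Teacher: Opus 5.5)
Your outline is correct and matches what the paper relies on. The paper gives no argument of its own beyond citing \cite[Proposition~24]{Takeda2026PullbackFormula}. Your first step, equivariance plus uniqueness of the spherical vector, is the same mechanism the paper uses for Proposition~\ref{prop:principal-local-pullback}. Your second step, the identities $\xi_r=s_{n+r,2r}(\xi^{(r)})$ and $\iota_{n,r}(s_{n,r}(h),g_2)=s_{n+r,2r}(\iota_{r,r}(h,g_2))$, is the standard reduction to the unramified doubling integral on $G_r\times G_r$.

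One correction to your bookkeeping. In the paper's unnormalized convention, the restricted section is the unramified degree-$2r$ section at the same $s=\mu/2$, with no shift. The normalized parameter is therefore $s'=(\mu-2r)/2$, not $2s'=\mu-r$. It is this value that yields $L_v(\frac{\mu-2r+1}{2},\pi_{f,v},\St)$ and $\prod_{a=0}^{2r-1}L_{F,v}(\mu-a,\chi_{0,v}\chi_{E/F,v}^{a})^{-1}$.
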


Let \(m\in\{r,n\}\) and \(v\in\bfa\).
For \(x=\begin{pmatrix}
  A & B \\
  C & D
\end{pmatrix}\in G_m(F_v)\),
put
\[
  \calM_m(x) = ( C\sqrt{-1}I_m+D, \overline C\sqrt{-1}I_m+\overline D ).
\]
Let \(\varphi_v\in\pi_{f,v}\) belong to the holomorphic \(K_{r,v}\)-type corresponding to \(\rho_{r,v}\).
Use the bideterminant realizations of the polynomial representations \(\rho_{m,\bfk_v}\) and \(\rho_{m,\bfl_v}\),
with matrix variables \(U\) and \(V\) whose columns are indexed by \(1,\ldots,m\).
The substitutions
\[
  u_{ai}\longmapsto u_{a,n-r+i}, \qquad v_{bi}\longmapsto v_{b,n-r+i} \qquad (1\le i\le r)
\]
send each bideterminant to the corresponding bideterminant in the last \(r\) columns.
Via these substitutions,
we regard \(V_{\sigma_{r,v}}\) as a subspace of \(V_{\sigma_{n,v}}\),
and likewise \(V_{\rho_{r,v}}\) as a subspace of \(V_{\rho_{n,v}}\) under the vector-space identifications fixed above.
For \(g=t_{n,r}(A)u\,s_{n,r}(h)k\) with \(A\in\GL_{n-r}(E_v)\),
\(u\in U_{n,r}(F_v)\),
\(h\in G_r(F_v)\),
and \(k\in K_{n,v}\),
define
\begin{align*}
  \Psi_{n,r,v}^{\hol}(\varphi_v;g,s;\chi_v) ={} & |\delta_n(g)\delta_r(h)^{-1}|^{\kappa_v+\nu_v-2s} \rho_{n,v}(\calM_n(g))^{-1} \rho_{r,v}(\calM_r(h))\varphi_v(h).
\end{align*}
For a pure tensor \(\varphi_\infty=\bigotimes_{v\in\bfa}\varphi_v\),
put
\[
  \Psi_{n,r,\infty}^{\hol}(\varphi_\infty;g,s;\chi_{\bfa}) = \bigotimes_{v\in\bfa} \Psi_{n,r,v}^{\hol}(\varphi_v;g_v,s;\chi_v),
\]
and extend this definition linearly in \(\varphi_\infty\).

See \cite[Proposition~27]{Takeda2026PullbackFormula},
\cite[Proposition~4.9]{Takeda2026Congruences},
\cite[Theorem~6.6]{Takeda2025DifferentialOperators}.

\begin{prop}\label{prop:arche-local-pullback}
  Put \(\pi_{f,\infty}=\bigotimes_{v\in\bfa}\pi_{f,v}\).
  For \(\varphi_\infty\in\pi_{f,\infty}\) in the holomorphic \(K_{r,\infty}\)-type corresponding to \(\rho_r\),
  define \(\varphi_\infty^\dagger\) componentwise by the same involution.
  Then,
  for every \(g_1\in G_n(\bbA_{F,\infty})\),
  \[
    \int_{G_r(\bbA_{F,\infty})} \left\langle \bbD_{\bfk,\bfl} \varepsilon_{n+r,\infty}^{\hol} \left( \xi_r\iota_{n,r}(g_1,g_2), \frac{\mu}{2};\chi_{\bfa} \right), \varphi_\infty^\dagger(g_2) \right\rangle \,dg_2 = c_\infty \Psi_{n,r,\infty}^{\hol} \left( \varphi_\infty; g_1, \frac{\mu}{2};\chi_{\bfa} \right),
  \]
  where
  \begin{align*}
    c_\infty ={} & 2^{-d_F(2r^2+(r-2)\mu)+d_{\bfk,\bfl}} \pi^{d_Fr^2} \prod_{v\in\bfa} \prod_{i=1}^{r} \prod_{t=0}^{r-1} (k_{v,i}+l_{v,i}+\mu-i-t)^{-1} \\ & \times \prod_{v\in\bfa} \frac{ \prod_{i=1}^{\ell(\bfk_v)} (k_{v,i}+\ell(\bfk_v)-i)! }{ \prod_{1\le i<j\le\ell(\bfk_v)} (k_{v,i}-k_{v,j}+j-i) } \prod_{v\in\bfa} \frac{ \prod_{i=1}^{\ell(\bfl_v)} (l_{v,i}+\ell(\bfl_v)-i)! }{ \prod_{1\le i<j\le\ell(\bfl_v)} (l_{v,i}-l_{v,j}+j-i) }.
  \end{align*}
\end{prop}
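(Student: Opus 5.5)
The archimedean integral is computed one place at a time. Both sides are pure tensors over \(v\in\bfa\): the section \(\varepsilon_{n+r,\infty}^{\hol}\), the operator \(\bbD_{\bfk,\bfl}\), the vector \(\varphi_\infty\) and the Haar measure all factor over the archimedean places. So it suffices to prove the formula for a single \(v\) and then multiply the local constants. The product of those local constants should give \(c_\infty\): the \(d_F\) in the power of \(2\) and in \(\pi^{d_Fr^2}\) comes from the \(d_F\) places, and \(d_{\bfk,\bfl}\) is the sum of the \(|\bfk_v|+|\bfl_v|\).

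Fix \(v\). I first reduce to \(g_1=1\). The left side, as a function of \(g_1\), transforms on the left under \(P_{n,r}(F_v)\) exactly as a Klingen section does. This follows from three facts:
\begin{itemize}
  \item \(\xi_r\iota_{n,r}(p,h)\) can be rewritten as an element of the Siegel parabolic times \(\xi_r\iota_{n,r}(1,h')\), where \(h'\) depends on the \(G_r\)-part of \(p\);
  \item after the substitution \(g_2\mapsto h'g_2\), the \(G_r\)-equivariance of \(\varphi_v^\dagger\) appears;
  \item the covariance relation of Proposition~\ref{prop:differential-criterion} for \(P^{(n,r)}_{\mu;\bfk_v,\bfl_v}(\pi_v^+)\) gives the \(\rho_{n,v}(k_1)^{-1}\) behaviour on the right under \(K_{n,v}\).
\end{itemize}
Since \(G_n=P_{n,r}K_{n,v}\), both sides are then determined by their values at \(g_1=1\). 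Moreover, the left side at \(g_1=1\) lies in the embedded subspace \(V_{\rho_{r,v}}\subset V_{\rho_{n,v}}\), and it depends on \(\varphi_v\) only through the value \(\varphi_v(1)\) of its lowest \(K\)-type vector. Hence it equals \(c_v\varphi_v(1)\) for a scalar \(c_v\), by Schur's lemma applied to the \(K_{r,v}\)-equivariant map.

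It remains to compute \(c_v\). I would use the Cartan decomposition \(g_2=k\,a(t)\,k'\) of \(\U(r,r)\), or equivalently work on \(\calH_r\) with the measure \(\det(Y)^{-2r}\,dX\,dY\), and apply \(\bbD_{\bfk,\bfl}\) to the explicit holomorphic function \(\delta_{n+r}^{-\mu}\) at \(s=\mu/2\). Applying the formula \(\bbD_{\bfk,\bfl}\exp(2\pi\sqrt{-1}\tr(RZ))=(2\pi\sqrt{-1})^{d_{\bfk,\bfl}}P(R)\exp(\cdots)\) through a Fourier/Laplace representation of \(\det(Z)^{-\mu}\), the differentiated kernel at the open-orbit point becomes \(P_{\mu;\bfk_v,\bfl_v}^{(n,r)}\) evaluated at an off-diagonal block. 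This block is a multiple of \(\det(Z_2-\overline{W})^{-\mu}\) times a polynomial in \((Z_2-\overline W)^{-1}\), where \(Z_2\) is the point of \(\calH_r\) for \(g_2\) and \(W\) is the corresponding point obtained through the involution \(\natural\).

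Pairing against \(\varphi_v^\dagger\) then turns the integral into a reproducing-kernel (Bergman-type) integral on \(\calH_r\) for the holomorphic discrete series of weight \(\rho_{r,v}\). Its value is the inverse formal degree, given by a Hua-type integral \(\int_{\calH_r}\det(Y)^{\mu-2r}|\det(Z+\sqrt{-1})|^{-2\mu}\cdots\). I would take the value from \cite[Theorem~6.6]{Takeda2025DifferentialOperators} together with the norm of the normalized polynomial restricted to the block. That norm yields the factorial and hook-length products for \(\bfk_v\) and \(\bfl_v\), while the gamma-function ratio gives \(\prod_{i,t}(k_{v,i}+l_{v,i}+\mu-i-t)^{-1}\) and the powers of \(2\) and \(\pi\).

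The main obstacle is this last step: tracking every normalization (the measures, the factor \(2^{\ldots}\) from \(\delta_m\) at \(\sqrt{-1}I\), and the pairing \(\langle P,Q\rangle\)) so that the constant comes out exactly. I would rely on the cited computations \cite[Proposition~27]{Takeda2026PullbackFormula} and \cite[Proposition~4.9]{Takeda2026Congruences} and check them against the scalar-weight case \(\bfk=\bfl=0\).
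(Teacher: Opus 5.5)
Your outline is correct and takes essentially the paper's route. The paper gives no independent argument for this proposition: it takes the statement, constant included, from \cite[Proposition~27]{Takeda2026PullbackFormula}, \cite[Proposition~4.9]{Takeda2026Congruences} and \cite[Theorem~6.6]{Takeda2025DifferentialOperators}, and you also obtain \(c_\infty\) from those results after the standard steps (factorization over places, reduction to \(g_1=1\), Schur/reproducing-kernel argument).

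One step needs a justification you did not give: that the value at \(g_1=1\) lies in \(V_{\rho_{r,v}}\subset V_{\rho_{n,v}}\). The \(t_{n,r}(\mathrm{U}(n-r))\)-invariance alone does not give this when \(\mathbf{k}_v\) and \(\mathbf{l}_v\) are both nonzero. It follows once you combine that invariance with the fact that the value is the image of a \(K_{r,v}\)-equivariant map out of \(V_{\rho_{r,v}}\).
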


It remains to compute the local integral at \(v\mid\frakN\).
Fix such a place \(v\) and use the section \(\varepsilon_{m,v}^{\frakN_v}(\,\cdot\,,s;\chi_v)\) of Subsection~\ref{subsec:siegel-eisenstein-series}.

\begin{lem}\label{lem:level-big-cell-criterion}
  Let \(m\ge1\),
  and let \(x=\begin{pmatrix}
    A & B \\
    C & D
  \end{pmatrix} \in G_m(F_v)\).
  Then
  \[
    x\in P_{m,0}(F_v)K_{m,v}(\frakN_v)
  \]
  if and only if \(D\) is invertible and \(D^{-1}C\in\M_m(\frakN_v\calO_{E,v})\).
\end{lem}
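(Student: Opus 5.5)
We prove both directions by direct matrix computation, using that \(P_{m,0}(F_v)\) consists of the matrices \(\begin{pmatrix}a&b\\0&d\end{pmatrix}\in G_m(F_v)\), with \(d=(a^*)^{-1}\).

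\emph{Forward direction.} Suppose \(x=pk\) with \(p=\begin{pmatrix}a&b\\0&d\end{pmatrix}\in P_{m,0}(F_v)\) and \(k=\begin{pmatrix}k_1&k_2\\k_3&k_4\end{pmatrix}\in K_{m,v}(\frakN_v)\). Then \(C=dk_3\) and \(D=dk_4\). Since \(k\equiv I_{2m}\pmod{\frakN_v\calO_{E,v}}\) and \(\frakN_v\) is a proper ideal (or \(k\in K_{m,v}\) when \(\frakN_v=\calO_{F_v}\)), we have \(k_4\equiv I_m\) modulo the maximal ideal in the first case. So \(k_4\in\GL_m(\calO_{E,v})\) and \(D\) is invertible, and
\[
D^{-1}C=k_4^{-1}k_3\in\M_m(\frakN_v\calO_{E,v}),
\]
because \(k_3\equiv0\).

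In the degenerate case \(\frakN_v=\calO_{F_v}\), \(k_4\) need not be invertible: \(P_{m,0}K_{m,v}=G_m(F_v)\) by the Iwasawa decomposition, while \(D\) may be singular. Since \(v\mid\frakN\) in the current context, we restrict to \(\frakN_v\subsetneq\calO_{F_v}\), where the above argument applies.

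\emph{Converse.} Suppose \(D\) is invertible and \(Y:=D^{-1}C\in\M_m(\frakN_v\calO_{E,v})\). The relation \(x^*J_mx=J_m\) gives \(C^*D=D^*C\), so \(Y\) is Hermitian, as \(Y=D^{-1}(D^*)^{-1}D^*C\). Put
\[
k=\begin{pmatrix}I_m&0\\Y&I_m\end{pmatrix}.
\]
Then \(k\in G_m(F_v)\) because \(Y\) is Hermitian, and \(k\in K_{m,v}(\frakN_v)\) because \(Y\equiv0\). Now
\[
xk^{-1}=\begin{pmatrix}A-BY&B\\C-DY&D\end{pmatrix}=\begin{pmatrix}A-BY&B\\0&D\end{pmatrix}.
\]
This lies in \(G_m(F_v)\) and is block upper triangular, hence lies in \(P_{m,0}(F_v)\). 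Therefore \(x\in P_{m,0}(F_v)K_{m,v}(\frakN_v)\).

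The only delicate point is the invertibility of \(k_4\), which needs \(\frakN_v\) to be a proper ideal; the Hermitian property of \(D^{-1}C\) follows directly from the defining relation of \(G_m\).
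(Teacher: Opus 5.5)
\textbf{Overall.} Your proof has the same structure as the paper's. In the forward direction, the lower blocks of \(pk\) are the lower blocks of \(k\) multiplied on the left by the Levi block. In the converse, you right-multiply by \(\begin{pmatrix}I_m&0\\-Y&I_m\end{pmatrix}\) to kill \(C\). Your remark that the lemma needs \(\frakN_v\ne\calO_{F_v}\) is correct: for \(\frakN_v=\calO_{F_v}\), the element \(x=J_m\) is a counterexample. The paper's proof uses this tacitly via ``\(d\equiv I_m\), so \(d\) is invertible'', which is legitimate because the lemma sits in the context \(v\mid\frakN\).

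\textbf{The gap.} The proof that \(Y=D^{-1}C\) is Hermitian fails as written, and you need this to get \(k\in G_m(F_v)\).

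\emph{Wrong relation.} The relation \(x^*J_mx=J_m\) constrains the block \emph{columns} of \(x\). It gives \(A^*C=C^*A\), \(B^*D=D^*B\) and \(D^*A-B^*C=I_m\), but not \(C^*D=D^*C\), which is false in general. For instance, take \(x=t_{m,0}(a)\begin{pmatrix}I_m&0\\Y_0&I_m\end{pmatrix}\) with \(Y_0\) Hermitian. Then \(D^*C=(a^*a)^{-1}Y_0\), which is not Hermitian when \(Y_0\) does not commute with \(a^*a\). Concretely, take \(m=2\), \(a=\diag(1,2)\), \(Y_0=\begin{pmatrix}0&1\\1&0\end{pmatrix}\); note that here \(D^{-1}C=Y_0\) is Hermitian, so only your justification fails, not the lemma.

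\emph{Invalid inference.} Even granting \(D^*C=C^*D\), your factorization \(Y=(D^*D)^{-1}(D^*C)\) writes \(Y\) as a product of two Hermitian matrices. Such a product need not be Hermitian.

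\textbf{The fix.} Use the block \emph{rows}. Since \(G_m(F_v)\) is a group, \(x^*J_mx=J_m\) implies \(xJ_mx^*=J_m\) (equivalently \(x^*\in G_m(F_v)\)). The lower-right block of \(xJ_mx^*\) is \(DC^*-CD^*\), so \(DC^*=CD^*\). Multiplying by \(D^{-1}\) on the left and \((D^*)^{-1}\) on the right gives \(D^{-1}C=C^*(D^*)^{-1}=(D^{-1}C)^*\). This is exactly the relation the paper uses, and with it the rest of your converse goes through unchanged.
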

\begin{proof}
  If \(x=pk\) with \(p\in P_{m,0}(F_v)\) and \(k=
  \begin{pmatrix}
    a & b \\
    c & d
  \end{pmatrix}\in K_{m,v}(\frakN_v)\),
  then \(d\equiv I_m\pmod{\frakN_v}\),
  so \(d\) is invertible.
  Since the lower blocks of \(x\) are obtained from \((c,d)\) by the same invertible Levi factor,
  \[
    D^{-1}C=d^{-1}c\in\M_m(\frakN_v\calO_{E,v}).
  \]

  Conversely,
  suppose that \(D\) is invertible and
  \[
    U=D^{-1}C\in\M_m(\frakN_v\calO_{E,v}).
  \]
  From \(DC^*=CD^*\),
  we have \(U^*=U\),
  and hence
  \[
    \overline n(-U) = \begin{pmatrix} I_m & 0 \\ -U & I_m \end{pmatrix} \in K_{m,v}(\frakN_v).
  \]
  Since the lower-left block of \(x\overline n(-U)\) is zero,
  \[
    x\overline n(-U)\in P_{m,0}(F_v),
  \]
  proving the converse.
\end{proof}

Let \(\varphi_v\in\pi_{f,v}^{K_{r,v}(\frakN_v)}\),
and use the section \(\Psi_{n,r,v}^{\frakN_v}(\varphi_v;\,\cdot\,,s;\chi_v)\) defined above.

\begin{prop}\label{prop:principal-local-pullback}
  We have
  \[
    \int_{G_r(F_v)} \varepsilon_{n+r,v}^{\frakN_v}
    \bigl( \xi_r \iota_{n,r}(g_1,g_2) \xi_r^{-1}, s;\chi_v \bigr)
    \overline{\varphi_v^\dagger(g_2)} \,dg_2
    =
    \vol(K_{r,v}(\frakN_v))
    \Psi_{n,r,v}^{\frakN_v}(\varphi_v;g_1,s;\chi_v).
  \]
\end{prop}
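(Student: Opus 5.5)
The plan is to show that both sides, as functions of \(g_1\), satisfy the same equivariance under \(P_{n,r}(F_v)\), are right \(K_{n,v}(\frakN_v)\)-invariant, and are supported on \(P_{n,r}(F_v)K_{n,v}(\frakN_v)\). It then suffices to compare the values at \(g_1=s_{n,r}(h)\) for \(h\in G_r(F_v)\). Write \(\Xi(g_1,g_2)=\xi_r\iota_{n,r}(g_1,g_2)\xi_r^{-1}\). Since \(\xi_r\) and \(\xi_r^{-1}\) have entries in \(\calO_{E,v}\), conjugation by \(\xi_r\) preserves congruence modulo \(\frakN_v\). Hence \(\Xi(k_1,k_2)\in K_{n+r,v}(\frakN_v)\) for \(k_1\in K_{n,v}(\frakN_v)\) and \(k_2\in K_{r,v}(\frakN_v)\). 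This gives the right \(K_{n,v}(\frakN_v)\)-invariance of the left side in \(g_1\) and the right \(K_{r,v}(\frakN_v)\)-invariance of the integrand in \(g_2\). The latter is compatible with \(\varphi_v\in\pi_{f,v}^{K_{r,v}(\frakN_v)}\), because \(\natural\) preserves \(K_{r,v}(\frakN_v)\) and \(\chi_v\) is unramified.

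Next I will establish the equivariance by a block-matrix computation. For \(A\in\GL_{n-r}(E_v)\), \(u\in U_{n,r}(F_v)\) and \(h\in G_r(F_v)\), I expect \(\Xi(t_{n,r}(A)u\,s_{n,r}(h),h^{\natural})\) (or \(h\); the correct twist is to be read off from the matrix product) to lie in \(P_{n+r,0}(F_v)\). Its Levi part should have determinant \(\det A\) times a factor involving \(\det h\). The modulus \(|\det(A^*A)|_v^s\chi_v(\det A^*)\) then appears, and the \(h\)-factor is absorbed by the character \(\chi_v(\det h)^{-1}\) built into \(\varphi_v^\dagger\). Substituting \(g_2\mapsto h^{\natural}g_2\) (or \(hg_2\)) in the integral then shows that the left side transforms under \(P_{n,r}(F_v)\) exactly like \(\Psi_{n,r,v}^{\frakN_v}(\varphi_v;\cdot,s;\chi_v)\). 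It also reduces the value at \(s_{n,r}(h)\) to the value at \(g_1=1\), with \(\varphi_v\) replaced by its right translate by \(h\).

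The heart of the argument is the support computation via Lemma~\ref{lem:level-big-cell-criterion}, which I expect to be the main obstacle. For \(g_1=1\), I will write out the lower blocks \((C,D)\) of \(\Xi(1,g_2)\) in terms of the blocks of \(g_2\). I then aim to show that \(D\) is invertible with \(D^{-1}C\in\M_{n+r}(\frakN_v\calO_{E,v})\) if and only if \(g_2\in K_{r,v}(\frakN_v)\), possibly up to a \(P\)-factor on the left that is killed by the previous step. On that set the section equals \(1\), so the integral becomes \(\vol(K_{r,v}(\frakN_v))\,\overline{\varphi_v^\dagger(1)}\). Unwinding \(\dagger\) gives \(\overline{\varphi_v^\dagger(1)}=\varphi_v(1)\), and after the translation by \(h\) this becomes \(\varphi_v(h)\), which matches \(\Psi_{n,r,v}^{\frakN_v}(\varphi_v;s_{n,r}(h),s;\chi_v)\).

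Finally, for \(g_1\notin P_{n,r}(F_v)K_{n,v}(\frakN_v)\) I must show the integrand vanishes identically in \(g_2\). I will again use the same \(D^{-1}C\) criterion. If \(\Xi(g_1,g_2)\in P_{n+r,0}K_{n+r,v}(\frakN_v)\), the integrality of the \(n\times n\) part of \(D^{-1}C\) should force the bottom \(r\) rows of the lower blocks of \(g_1\) to define a point of \(P_{n,r}K_{n,v}(\frakN_v)\). If this direct argument becomes messy, I would first reduce \(g_1\) by an Iwasawa decomposition to \(P_{n,r}\)-coset representatives of \(P_{n,r}\backslash G_{n}/K_{n,v}(\frakN_v)\) and check them one at a time.
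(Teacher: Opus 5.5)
Your proposal follows the paper's proof: the open-orbit stabilizer $\{(t_{n,r}(A)u\,s_{n,r}(h),h^\natural)\}$ gives the $P_{n,r}(F_v)$-equivariance and the reduction to $g_1=s_{n,r}(h)$, and Lemma~\ref{lem:level-big-cell-criterion} shows that the section restricts to $\mathbbm 1_{K_{r,v}(\frakN_v)}$ on a slice. You use the slice $\xi_r\iota_{n,r}(1,g_2)\xi_r^{-1}$ where the paper uses $\xi_r\iota_{n,r}(s_{n,r}(g),I_{2r})\xi_r^{-1}$, and the two are equivalent via the stabilizer.

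For the support in $g_1$, which the paper only asserts, first use the stabilizer to move $g_2$ to $1$. Writing $g_1=\begin{pmatrix}A_1&B_1\\C_1&D_1\end{pmatrix}$, the criterion then yields both $D_1^{-1}C_1\in\M_n(\frakN_v\calO_{E,v})$ and $D_1^{-1}e_r\equiv e_r\pmod{\frakN_v}$, and these together place $g_1$ in $P_{n,r}(F_v)K_{n,v}(\frakN_v)$. Both conditions are needed: the $n\times n$ block alone only gives $P_{n,0}(F_v)K_{n,v}(\frakN_v)$. Also, membership in $P_{n,r}(F_v)K_{n,v}(\frakN_v)$ is governed by rows $n+1,\dots,2n-r$ of $g_1$, not by the bottom $r$ rows of its lower blocks.
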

\begin{proof}
  Put
  \[
    Z_v(g_1,s)
    =
    \int_{G_r(F_v)} \varepsilon_{n+r,v}^{\frakN_v}
    \bigl( \xi_r \iota_{n,r}(g_1,g_2) \xi_r^{-1}, s;\chi_v \bigr)
    \overline{\varphi_v^\dagger(g_2)} \,dg_2.
  \]

  We first determine the support and value of the section at \(\xi_r\iota_{n,r}(s_{n,r}(g),I_{2r})\xi_r^{-1}\).
  Write
  \[
    g=\begin{pmatrix}a&b\\c&d\end{pmatrix}\in G_r(F_v).
  \]
  With respect to the decomposition \(n=(n-r)+r\),
  the complementary blocks of size \(n-r\) are identity or zero blocks,
  and the nontrivial \(4r\times4r\) block of \(\xi_r \iota_{n,r}(s_{n,r}(g),I_{2r}) \xi_r^{-1}\)
  is
  \[
    \begin{pmatrix} a & -b & b & 0 \\ 0 & I_r & 0 & 0 \\ c & I_r-d & d & 0 \\ a-I_r & -b & b & I_r \end{pmatrix}.
  \]
  Its lower blocks are
  \[
    C_0= \begin{pmatrix} c & I_r-d \\ a-I_r & -b \end{pmatrix}, \qquad D_0= \begin{pmatrix} d & 0 \\ b & I_r \end{pmatrix}.
  \]
  By Lemma~\ref{lem:level-big-cell-criterion},
  the section is supported precisely when \(d\) is invertible and
  \[
    D_0^{-1}C_0 = \begin{pmatrix} d^{-1}c & d^{-1}-I_r \\ a-I_r-bd^{-1}c & -bd^{-1} \end{pmatrix} \in\M_{2r}(\frakN_v\calO_{E,v}).
  \]
  This is equivalent to \(a\equiv d\equiv I_r, b\equiv c\equiv0 \pmod{\frakN_v}\),
  hence to \(g\in K_{r,v}(\frakN_v)\).
  If \(g\in K_{r,v}(\frakN_v)\),
  then \(\xi_r\iota_{n,r}(s_{n,r}(g),I_{2r})\xi_r^{-1}\in K_{n+r,v}(\frakN_v)\).
  By the right \(K_{n+r,v}(\frakN_v)\)-invariance of the section and its normalization at the identity,
  we obtain
  \[
    \varepsilon_{n+r,v}^{\frakN_v} \bigl(\xi_r\iota_{n,r}(s_{n,r}(g),I_{2r})\xi_r^{-1},s;\chi_v\bigr) =\varepsilon_{n+r,v}^{\frakN_v}(1,s;\chi_v) =1.
  \]

  Thus
  \[
    \varepsilon_{n+r,v}^{\frakN_v}
    \bigl(\xi_r\iota_{n,r}(s_{n,r}(g),I_{2r})\xi_r^{-1},s;\chi_v\bigr)
    =\mathbbm1_{K_{r,v}(\frakN_v)}(g).
  \]

  By the calculation of the stabilizer of the open orbit in \cite[Fact~19(2)]{Takeda2026PullbackFormula} and the description of the support above,
  \(Z_v(g_1,s)\) is supported on \(P_{n,r}(F_v)K_{n,v}(\frakN_v)\).
  It has the same \(P_{n,r}(F_v)\)-equivariance and right \(K_{n,v}(\frakN_v)\)-invariance as \(\Psi_{n,r,v}^{\frakN_v}(\varphi_v;g_1,s;\chi_v)\).
  It therefore suffices to take \(g_1=s_{n,r}(h)\).
  The change of variables and the transformation law of the section yield
  \begin{align*}
    Z_v(s_{n,r}(h),s)
     & =\int_{G_r(F_v)}
    \varepsilon_{n+r,v}^{\frakN_v}
    \bigl(\xi_r\iota_{n,r}(s_{n,r}(h),(hg^{-1})^\natural)\xi_r^{-1},s;\chi_v\bigr)
    \overline{\varphi_v^\dagger((hg^{-1})^\natural)}\,dg   \\
     & =\int_{G_r(F_v)}
    \mathbbm1_{K_{r,v}(\frakN_v)}(g)\varphi_v(hg^{-1})\,dg \\
     & =\int_{K_{r,v}(\frakN_v)}\varphi_v(hg^{-1})\,dg     \\
     & =\vol(K_{r,v}(\frakN_v))
    \Psi_{n,r,v}^{\frakN_v}(\varphi_v;s_{n,r}(h),s;\chi_v).
  \end{align*}
\end{proof}

The volume factor is also explicit.
Since \(v\nmid\frakD_{E/F}\),
the reductive \(\calO_{F_v}\)-group scheme defining the hyperspecial subgroup \(K_{r,v}\) is smooth,
and our normalization \(\vol(K_{r,v})=1\) gives
\[
  \vol(K_{r,v}(\frakp_v^c)) = \#G_r(\calO_{F_v}/\frakp_v^c)^{-1}.
\]
Using the orders of \(\GL_{2r}(\bbF_{q_v})\) in the split case and \(\U_{2r}(\bbF_{q_v})\) in the inert case,
we obtain,
for \(c\ge1\),
\[
  \vol(K_{r,v}(\frakp_v^c)) = \begin{cases} \displaystyle q_v^{-4r^2(c-1)-r(2r-1)} \prod_{j=1}^{2r}(q_v^j-1)^{-1}, & v\text{ split in }E, \\[8pt] \displaystyle q_v^{-4r^2(c-1)-r(2r-1)} \prod_{j=1}^{2r} (q_v^j-(-1)^j)^{-1}, & v\text{ inert in }E/F. \end{cases}
\]

\subsection{Global pullback formula}\label{subsec:global-pullback}

For a pure tensor \(f=\bigotimes_v f_v\),
define the global Klingen section by
\[
  \Psi_{n,r,\frakN}(f;g,s;\chi)
  =\left(\prod_{v\in\bfh}\Psi_{n,r,v}^{\frakN_v}(f_v;g_v,s;\chi_v)\right)
  \otimes\left(\bigotimes_{v\in\bfa}\Psi_{n,r,v}^{\hol}(f_v;g_v,s;\chi_v)\right),
\]
using spherical vectors normalized by \(f_v(1)=1\) at almost all finite places.
Extend this definition linearly to general \(f\).
Put
\[
  [f]_{\kappa,\nu,\frakN}^n(g,s;\chi) = \sum_{\gamma\in P_{n,r}(F)\backslash G_n(F)} \Psi_{n,r,\frakN}(f;\gamma g,s;\chi).
\]
so that \([f]_{\kappa,\nu,\frakN}^{r}=f\) when \(n=r\).
At the holomorphic point,
write
\[
  [f]_{\kappa,\nu,\frakN}^n(g;\chi) = [f]_{\kappa,\nu,\frakN}^n(g,\mu/2;\chi).
\]

Let \(\theta_{n,r}=\theta_{n,r}^{(\frakN)}\in G_{n+r}(\bbA_{F,\bfh})\) be the element whose \(v\)-component is
\[
  \theta_{n,r,v} = \begin{cases} \xi_r, & v\mid\frakN, \\ I_{2(n+r)}, & v\nmid\frakN. \end{cases}
\]
Put
\[
  \bbL^{(\frakN)}(m,\mu;\chi) = \prod_{a=0}^{m-1} L^{(\frakN)}( 1-\mu+a, \chi_0^{-1}\chi_{E/F}^{a} ).
\]
Define the degree-\(m\) arithmetic normalization by
\[
  \widetilde E_{m,\kappa,\nu,\frakN}(g;\chi)
  =
  \bbL^{(\frakN)}(m,\mu;\chi)
  E_{m,\kappa,\nu,\frakN}(g;\chi).
\]
For the pullback kernel, however, we use the degree-\(n\) normalization
\[
  \widetilde E_{n+r,\kappa,\nu,\frakN}^{[n],\theta}(g;\chi)
  =
  \bbL^{(\frakN)}(n,\mu;\chi)
  E_{n+r,\kappa,\nu,\frakN}(g\theta_{n,r}^{-1};\chi).
\]
Put
\[
  \calE_{\frakN}(g,h)
  =
  (2\pi\sqrt{-1})^{-d_{\bfk,\bfl}}
  \bbD_{\bfk,\bfl}
  \widetilde E_{n+r,\kappa,\nu,\frakN}^{[n],\theta}
  (\iota_{n,r}(g,h);\chi),
\]
and
\begin{align}
  C_{n+r,\kappa,\nu}^{(\frakN)}(f;\chi)
  ={} &
  (2\pi\sqrt{-1})^{-d_{\bfk,\bfl}}
  \bbL^{(\frakN)}(n,\mu;\chi)c_\infty
  \notag       \\
      & \times
  L^{(\frakN)}\left(
  \frac{\mu-2r+1}{2},f,\St
  \right)
  \prod_{a=0}^{2r-1}
  L^{(\frakN)}
  \left(
  \mu-a,\chi_0\chi_{E/F}^{a}
  \right)^{-1}
  \notag       \\
      & \times
  \prod_{v\mid\frakN}
  \vol(K_{r,v}(\frakN_v)).
  \label{eq:C-explicit-formula}
\end{align}
Suppose \(\mu\ge n+r\).
For \(0\le a\le2r-1\),
\[
  \mu-a\ge n-r+1\ge2,
\]
so the Hecke \(L\)-factors in the denominator are finite and nonzero.
Moreover,
\[
  \frac{\mu-2r+1}{2}
  \ge
  \frac{n-r+1}{2}
  \ge1.
\]
This lies in the zero-free half-plane \(\Re(s)>1\), except when
\(\mu=n+r\) and \(n=r+1\), where we assume
\(L^{(\frakN)}(1,f,\St)\) to be holomorphic and nonzero.
Finally, for \(0\le a\le n-1\) put
\(k=\mu-a\).
Then \(k\ge r+1\ge2\).
The archimedean parity of
\(\chi_0^{-1}\chi_{E/F}^{a}\)
is the parity required at \(1-k\), so the functional equation identifies the corresponding full Hecke \(L\)-value with a nonzero value in the half-plane of absolute convergence.
Removing the Euler factors at \(v\mid\frakN\) multiplies this value by
\[
  1-
  \bigl(\chi_0^{-1}\chi_{E/F}^{a}\bigr)_v(\varpi_v)
  q_v^{k-1},
\]
which is nonzero because \(k\ge2\).
Thus \(\bbL^{(\frakN)}(n,\mu;\chi)\ne0\).
The explicit factor \(c_\infty\) and the local volume factors are also nonzero.
\begin{thm}\label{thm:global-pullback}
  Assume \(\mu\ge n+r\).
  If \(\mu=n+r+1\) and \(F=\bbQ\), assume \(\chi_0\ne\chi_{E/F}^{n+r+1}\).
  If \(\mu=n+r\) and \(n=r+1\), assume that \(L^{(\frakN)}(1,f,\St)\) is holomorphic and nonzero.
  Then the Klingen--Eisenstein series \([f]_{\kappa,\nu,\frakN}^n(g,s;\chi)\) is regular at \(s=\mu/2\),
  and
  \[
    ( \calE_{\frakN}(g,*), f^\dagger )_{G_r}
    =
    C_{n+r,\kappa,\nu}^{(\frakN)}(f;\chi)
    [f]_{\kappa,\nu,\frakN}^n(g;\chi).
  \]
\end{thm}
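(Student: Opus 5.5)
We follow the standard doubling/pullback strategy of Garrett and Shimura: first prove the identity for \(\Re(s)\) large, with the Siegel Eisenstein series of degree \(n+r\) at a general \(s\), and then specialize to \(s=\mu/2\) by analytic continuation. For \(\Re(s)>n+r\), we write \(\calE_{\frakN}(g,h;s)\) with \(s\) in place of \(\mu/2\). The series \(E_{n+r,\kappa,\nu,\frakN}(\iota_{n,r}(g,h)\theta_{n,r}^{-1},s;\chi)\) converges absolutely and locally uniformly. Since \(\bbD_{\bfk,\bfl}\) is a right-invariant operator, it can be applied termwise. We then insert the expansion over \(P_{n+r,0}(F)\backslash G_{n+r}(F)\) into the Petersson integral against \(f^\dagger\) and decompose into double cosets for the right action of \(\iota_{n,r}(G_n\times G_r)(F)\). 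By \cite[Fact~19, Proposition~20]{Takeda2026PullbackFormula}, every non-open orbit contributes zero by cuspidality of \(f\). For the open orbit, represented by \(\xi_r\), the stabilizer computation unfolds the sum into
\[
\sum_{\gamma\in P_{n,r}(F)\backslash G_n(F)}\int_{G_r(\bbA_F)}\bigl\langle\bbD_{\bfk,\bfl}\varepsilon(\xi_r\iota_{n,r}(\gamma g,h)\theta_{n,r}^{-1},s),f^\dagger(h)\bigr\rangle\,dh .
\]
Absolute convergence for \(\Re(s)\) large, together with the rapid decay of \(f\), justifies the interchange of sum and integral.

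Next we factor the inner adelic integral over places, taking \(f\) to be a pure tensor. Since \(\theta_{n,r,v}=\xi_r\) for \(v\mid\frakN\), the local integrand at such \(v\) is exactly \(\varepsilon^{\frakN_v}(\xi_r\iota_{n,r}(g_1,g_2)\xi_r^{-1},s)\). Proposition~\ref{prop:principal-local-pullback} then gives \(\vol(K_{r,v}(\frakN_v))\Psi^{\frakN_v}_{n,r,v}\) at these places. At unramified finite places, Proposition~\ref{prop:unramified-local-pullback}, in its general-\(s\) form, gives the standard \(L\)-factor times the inverse Hecke factors times \(\Psi^{\calO_{F_v}}_{n,r,v}\). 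At the archimedean places, Proposition~\ref{prop:arche-local-pullback} gives \(c_\infty\Psi^{\hol}\), together with the \((2\pi\sqrt{-1})^{-d_{\bfk,\bfl}}\) normalization. Multiplying by \(\bbL^{(\frakN)}(n,\mu;\chi)\) and summing over \(\gamma\), we obtain, for \(\Re(s)\) large,
\[
(\calE_{\frakN}(g,*;s),f^\dagger)_{G_r}=C(s)\,[f]^n_{\kappa,\nu,\frakN}(g,s;\chi),
\]
where \(C(s)\) is the \(s\)-version of \eqref{eq:C-explicit-formula}. One caveat: the archimedean computation is stated only at \(\mu/2\). For the continuation argument, we either use its general-\(s\) analogue from \cite{Takeda2026PullbackFormula}, or we carry out the whole identity with the holomorphic section and check \(s\)-dependence separately. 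I would take the former route.

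The last step is continuation to \(s=\mu/2\), which I expect to be the main obstacle. The left side is meromorphic in \(s\), being a Petersson integral of a meromorphic Eisenstein series against a rapidly decreasing cusp form. By \cite[Proposition~17.7]{Shimura2000Arithmeticity}, it is holomorphic at \(s=\mu/2\), since \(\mu\ge n+r\) and the exceptional case \(\mu=n+r+1\), \(F=\bbQ\), \(\chi_0=\chi_{E/F}^{n+r+1}\) is excluded. On the right, \(C(s)\) is meromorphic, and the discussion preceding the theorem shows that \(C(\mu/2)\) is finite and nonzero; this uses the assumption on \(L^{(\frakN)}(1,f,\St)\) in the boundary case. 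Consequently \([f]^n(g,s)=C(s)^{-1}(\calE_{\frakN}(g,*;s),f^\dagger)\) is regular at \(s=\mu/2\), and the identity persists there by uniqueness of meromorphic continuation. Two points must be checked here. First, differentiation by \(\bbD_{\bfk,\bfl}\) commutes with meromorphic continuation, which holds because the continuation is locally uniform in \(g\). Second, the Petersson integral at \(s=\mu/2\) converges absolutely and agrees with the continued function. For this I would use moderate growth of the continued Eisenstein series, uniform for \(s\) in a compact neighbourhood, against the rapid decay of \(f^\dagger\), and then apply dominated convergence.
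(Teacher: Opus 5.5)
Your proposal is correct and follows essentially the same route as the paper's proof. You unfold for \(\Re(s)>n+r\), keep only the open orbit by cuspidality, and factor into the local integrals at \(v\mid\frakN\), at the unramified places and at infinity. You then continue meromorphically, and use Shimura's Proposition~17.7 together with the nonvanishing of \(C_{n+r,\kappa,\nu}^{(\frakN)}(f;\chi)\) at \(s=\mu/2\) to obtain both regularity of \([f]^n_{\kappa,\nu,\frakN}\) and the identity. Your caveats, namely the general-\(s\) forms of the local integrals, commuting \(\bbD_{\bfk,\bfl}\) with continuation, and dominated convergence at \(\mu/2\), only make explicit points that the paper leaves implicit.
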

\begin{proof}
  For \(\Re(s)>n+r\),
  unfold the pullback integral as in \cite[\S\S~5.2--5.3]{Takeda2026PullbackFormula}.
  Cuspidality eliminates all but the open orbit,
  and the local calculations there identify the remaining integral with a scalar multiple of \([f]_{\kappa,\nu,\frakN}^n(g,s;\chi)\).
  This identity extends meromorphically in \(s\).

  Put \(m=n+r\).
  Since \(\mu\ge m\),
  \cite[Proposition~17.7]{Shimura2000Arithmeticity}
  gives regularity of the degree-\(m\) Siegel Eisenstein series at \(s=\mu/2\),
  except at \(\mu=m+1\) with the exceptional character.
  This is exactly the case excluded in the statement.
  Its Petersson pairing with \(f^\dagger\) is therefore regular at \(\mu/2\),
  since \(f^\dagger\) is rapidly decreasing.

  At \(s=\mu/2\),
  Propositions~\ref{prop:unramified-local-pullback},
  \ref{prop:arche-local-pullback},
  and \ref{prop:principal-local-pullback} identify the scalar obtained from the local integrals with the nonzero value
  \(C_{n+r,\kappa,\nu}^{(\frakN)}(f;\chi)\) in \eqref{eq:C-explicit-formula}.
  Dividing the continued identity by this scalar proves regularity of \([f]_{\kappa,\nu,\frakN}^n(g,s;\chi)\) at \(\mu/2\).
  Evaluation at \(\mu/2\) gives the asserted formula.
\end{proof}

\section{Primitive Fourier coefficients}\label{sec:primitive}

We formulate the primitive Fourier coefficient as a finite M\"obius sum and apply it to the pullback kernel before unfolding.
Because the lattices \(L_{\gamma,\frakN}^{(n)}\) attached to cusp representatives over a general CM field need not be free,
the inversion is expressed in terms of projective \(\calO_E\)-lattices.

\subsection{Primitive Fourier coefficients and cusp representatives}\label{subsec:primitive-coefficients}

Put \(\widehat{\calO}_E=\calO_E\otimes_{\bbZ}\widehat{\bbZ}\) and \(W_m=E^m\oplus0\subset E^{2m}\).
For \(g\in G_m(\bbA_{F,\bfh})\),
define
\[
  L_{g,\frakN}^{(m)}
  =\frakN\cdot(W_m\cap g\widehat{\calO}_E^{2m}).
\]
This is a finitely generated torsion-free \(\calO_E\)-module,
hence a projective lattice.

Choose a set \(\calC^{(m)}(\frakN)\subset G_m(\bbA_{F,\bfh})\) of representatives for \(P_{m,0}(F)\backslash G_m(\bbA_{F,\bfh})/K_m(\frakN)\).
We choose each representative \(\gamma\) so that \(\gamma_v\in K_{m,v}\) if \(v\mid\frakN\),
while \(\gamma_v=t_{m,0}(c_{\gamma,v})\) with \(c_{\gamma,v}\in\GL_m(E_v)\) if \(v\nmid\frakN\).
Set \(c_{\gamma,v}=I_m\) for \(v\mid\frakN\) and write \(c_\gamma=(c_{\gamma,v})_{v\in\bfh}\).
The lattice attached to \(\gamma\) then satisfies
\[
  L_{\gamma,\frakN}^{(m)}\otimes\widehat{\calO}_E
  =\frakN\cdot c_\gamma\widehat{\calO}_E^{m}.
\]

For \(g\in G_m(\bbA_{F,\bfh})\),
put
\[
  \fraku_g(\frakN)
  =
  \{Y\in\Her_m(E):n_m(Y)\in G_m(F)\cap gK_m(\frakN)g^{-1}\},
\]
and
\[
  \Lambda_g(\frakN)
  =
  \{T\in\Her_m(E):\tr(TY)\in\frakd_F^{-1}\text{ for all }Y\in\fraku_g(\frakN)\}.
\]
Let \(F_0\) be a holomorphic automorphic form of weight \(\rho\),
and let \(g\in G_m(\bbA_{F,\bfh})\),
viewed as an adelic element with identity archimedean component.
For \(T\in\Her_m(E)\),
its normalized \(T\)-th Fourier coefficient on the component represented by \(g\) is defined by
\[
  A_{F_0}(g,T)
  =e^{2\pi\tr_\infty(T)}
  \int_{[\Her_m]}
  F_0(n_m(Y)g)\overline{\psi_F(\tr(TY))}\,dY,
\]
where we put \([\Her_m]=\Her_m(F)\backslash\Her_m(\bbA_F)\).
Here \(dY\) is the quotient measure induced by the fixed self-dual Haar measure.
With this normalization,
the corresponding holomorphic function \(F_{0,g}\) has the expansion
\[
  F_{0,g}(Z)
  =
  \sum_T A_{F_0}(g,T)e^{2\pi\sqrt{-1}\tr_\infty(TZ)}.
\]
At level \(K_m(\frakN)\),
we have \(A_{F_0}(g,T)=0\) unless \(T\in\Lambda_g(\frakN)\).

For \(\gamma\in\calC^{(n)}(\frakN)\) and \(T\in\Lambda_\gamma(\frakN)_{>0}\),
put
\[
  (L_{\gamma,\frakN}^{(n)})_T^\sharp
  =
  \left\{
  y\in E^n:
  \Tr_{E/F}(y^*Tx)\in\frakd_F^{-1}
  \text{ for all }x\in L_{\gamma,\frakN}^{(n)}
  \right\}.
\]

\begin{dfn}\label{def:primitive-lattice-poset}
  Let \(\calP_{\gamma,\frakN}^{(n)}(T)\) be the set of projective \(\calO_E\)-lattices \(M\subset E^n\) such that
  \begin{enumerate}
    \item \(L_{\gamma,\frakN}^{(n)}\subseteq M\subseteq(L_{\gamma,\frakN}^{(n)})_T^\sharp\);
    \item \(x^*Tx\in\frakd_F^{-1}\) for every \(x\in M\);
    \item \(M_v=L_{\gamma,\frakN,v}^{(n)}\) for every \(v\mid\frakN\).
  \end{enumerate}
\end{dfn}

The quotient \((L_{\gamma,\frakN}^{(n)})_T^\sharp/L_{\gamma,\frakN}^{(n)}\) is finite,
so \(\calP_{\gamma,\frakN}^{(n)}(T)\) is a finite poset.
Write \(\calP_\gamma(T)=\calP_{\gamma,\frakN}^{(n)}(T)\).

For a polynomial representation \((\rho,V_\rho)\) of \(K_{m,\infty}^{\bbC}\),
\(a\in\GL_m(E)\),
and \(Z,S\in\Her_m(E)\),
write
\[
  \rho[a]
  =
  \rho((a_v^*,{}^ta_v)_{v\in\bfa}),
  \qquad
  \rho_S[a,Z]
  =
  \psi_{F,\bfh}(\tr(SZ))\rho[a].
\]
Thus \(\rho_S[a,Z]\in\operatorname{End}(V_\rho)\) is the product of the Fourier character arising from translation by \(Z\) and the action of \(a\) on the coefficient representation.
The normalized Fourier coefficients satisfy
\begin{equation}\label{eq:Fourier-coefficient-covariance}
  A_{F_0}(\gamma',(b^*)^{-1}Sb^{-1})
  =
  \rho[b]^{-1}A_{F_0}(\gamma',S)
\end{equation}
whenever
\(t_{n,0}(b)\in G_n(F)\cap\gamma'K_n(\frakN)(\gamma')^{-1}\).

Fix \(M\in\calP_\gamma(T)\).
Choose \(h_M\in\GL_n(\bbA_{E,\bfh})\) such that
\[
  h_M(L_{\gamma,\frakN}^{(n)}\otimes\widehat{\calO}_E)
  =M\otimes\widehat{\calO}_E,
\]
with \(h_{M,v}=I_n\) for \(v\mid\frakN\).
By the definition of \(\calC^{(n)}(\frakN)\) and the Levi decomposition of \(P_{n,0}\),
we may write
\[
  t_{n,0}(h_M)\gamma
  =t_{n,0}(a_M^{-1})n_n(Z_M)\gamma[M]k_M,
\]
where \(\gamma[M]\in\calC^{(n)}(\frakN)\),
\(a_M\in\GL_n(E)\),
\(Z_M\in\Her_n(E)\),
and \(k_M\in K_n(\frakN)\).

These choices give
\[
  a_MM=L_{\gamma[M],\frakN}^{(n)}.
\]
Put
\[
  T_M=(a_M^*)^{-1}Ta_M^{-1}.
\]
Then \(T_M\in\Lambda_{\gamma[M]}(\frakN)_{>0}\).

For \(M=L_{\gamma,\frakN}^{(n)}\),
we take \(h_M=a_M=I_n\),
\(Z_M=0\),
\(\gamma[M]=\gamma\),
and \(k_M=1\).

\begin{lem}\label{lem:fourier-coefficient-transformation}
  Suppose that
  \[
    t_{n,0}(h)\gamma
    =
    t_{n,0}(a^{-1})n_n(Z)\eta k,
  \]
  with \(a\in\GL_n(E)\),
  \(Z\in\Her_n(E)\),
  \(\eta\in\calC^{(n)}(\frakN)\),
  and \(k\in K_n(\frakN)\).
  If \(S=(a^*)^{-1}Ta^{-1}\),
  then
  \[
    A_{F_0}(t_{n,0}(h)\gamma,T)
    =
    \rho_S[a,Z]A_{F_0}(\eta,S).
  \]
  In particular,
  \begin{equation}\label{eq:fourier-coefficient-transformation}
    A_{F_0}(t_{n,0}(h_M)\gamma,T)
    =
    \rho_{T_M}[a_M,Z_M]A_{F_0}(\gamma[M],T_M).
  \end{equation}
  For fixed \(F_0\),
  the common value in \eqref{eq:fourier-coefficient-transformation} depends only on \(\gamma\),
  \(T\),
  and \(M\).
\end{lem}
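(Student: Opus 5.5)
The plan is a direct computation with the integral defining \(A_{F_0}\), using left \(G_n(F)\)-invariance and right \(K_n(\frakN)\)-invariance of \(F_0\). Put \(g=t_{n,0}(h)\gamma=t_{n,0}(a^{-1})n_n(Z)\eta k\).

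First, right invariance under \(k\) gives \(F_0(n_n(Y)g)=F_0(n_n(Y)t_{n,0}(a^{-1})n_n(Z)\eta)\), since \(k\) has trivial archimedean component. Next I move the rational element \(t_{n,0}(a^{-1})\) to the left. We have \(n_n(Y)t_{n,0}(a^{-1})=t_{n,0}(a^{-1})\,n_n(aYa^*)\), because \(t_{n,0}(a^{-1})^{-1}n_n(Y)t_{n,0}(a^{-1})=n_n(aYa^*)\) when \(t(A)=\diag(A,(A^*)^{-1})\). The map \(Y\mapsto Y'=aYa^*+Z\) preserves \(\Her_n(F)\) (as \(a\in\GL_n(E)\) and \(Z\in\Her_n(E)\)), and it preserves the quotient measure on \([\Her_n]\) by the product formula \(\prod_v|\det(aa^*)|_v=1\). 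Left \(G_n(F)\)-invariance then removes the leftmost rational factor.

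The subtle point is that the identity has archimedean component \(1\), so we must track how \(t_{n,0}(a^{-1})\) acts when it is taken as a rational element. Write \(t_{n,0}(a^{-1})=t_{n,0}(a^{-1})_{\bfh}\,t_{n,0}(a^{-1})_\infty\). Removing the diagonal rational element leaves the archimedean part \(t_{n,0}(a)_\infty\) acting on the right of the archimedean variable. This archimedean Levi element does not lie in \(K_{n,\infty}\), so it should be handled through the holomorphic function \(F_{0,\eta}\) rather than through the adelic integral directly. Concretely, I would work with the classical expansion: the relation identifies \(F_{0,g}(Z_\infty)\) with \(\rho[a]^{\pm1}F_{0,\eta}(a Z_\infty a^*+Z)\), up to the automorphy factor of \(t(a)\), which is \(\rho\) evaluated on \((a^*,{}^ta)\). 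Substituting the Fourier expansion of \(F_{0,\eta}\) gives
\[
\tr(S(aZ_\infty a^*))=\tr(a^*SaZ_\infty)=\tr(TZ_\infty)
\]
precisely when \(S=(a^*)^{-1}Ta^{-1}\). The translation by \(Z\) produces the finite additive character \(\psi_{F,\bfh}(\tr(SZ))\). Its archimedean part is absorbed because the archimedean \(Z\)-translation is trivial; equivalently, \(\psi_F\) is trivial on \(F\), so the full character at rational \(Z\) contributes only the \(\psi_{F,\bfh}\) factor, with sign as in the definition of \(\rho_S\). Comparing the coefficients of \(e^{2\pi\sqrt{-1}\tr_\infty(TZ_\infty)}\), after checking that the normalizing factors \(e^{2\pi\tr_\infty(T)}\) match via the archimedean action, yields \(A_{F_0}(g,T)=\rho_S[a,Z]A_{F_0}(\eta,S)\). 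The main obstacle is keeping these conventions consistent: which of \(\rho[a]\) or \(\rho[a]^{-1}\) appears, and the sign of the character. I would fix them by matching against \eqref{eq:Fourier-coefficient-covariance}, which is the special case \(h=1\), \(Z=0\), \(\eta=\gamma'\).

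The specialization \eqref{eq:fourier-coefficient-transformation} follows by taking \(h=h_M\). For the independence claim, the left side \(A_{F_0}(t_{n,0}(h_M)\gamma,T)\) must not depend on the choice of \(h_M\). Two choices differ by right multiplication \(h_M\mapsto h_Mu\) with \(u\in\GL_n(\widehat{\calO}_E)\) stabilizing \(L_{\gamma,\frakN}^{(n)}\otimes\widehat{\calO}_E\) and \(u_v=1\) for \(v\mid\frakN\). Then \(t_{n,0}(h_Mu)\gamma=t_{n,0}(h_M)\gamma\cdot\gamma^{-1}t_{n,0}(u)\gamma\). At \(v\nmid\frakN\), \(\gamma_v=t(c_{\gamma,v})\) and \(u_v\) stabilizes \(c_{\gamma,v}\calO_{E,v}^n\), so \(\gamma_v^{-1}t(u_v)\gamma_v\in K_{n,v}\); at \(v\mid\frakN\) it is trivial. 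Right \(K_n(\frakN)\)-invariance then gives independence. The right-hand side equals the left-hand side by the first part, so it is independent of \(a_M\), \(Z_M\), \(\gamma[M]\) and \(k_M\) as well.
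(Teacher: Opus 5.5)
Your argument is correct, but for the first identity it takes a different route from the paper. The paper does no computation: it quotes the Fourier coefficient transformation formula of \cite[Section~2.2]{Takeda2026Congruences} with \(\alpha=a^{-1}\) and \(X_0=a^{-1}Z(a^*)^{-1}\), that is, after rewriting \(t_{n,0}(a^{-1})n_n(Z)=n_n(a^{-1}Z(a^*)^{-1})t_{n,0}(a^{-1})\).

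You instead rederive that formula from left \(G_n(F)\)-invariance, right \(K_n(\frakN)\)-invariance, the conjugation \(n_n(Y)t_{n,0}(a^{-1})=t_{n,0}(a^{-1})n_n(aYa^*)\), and the classical expansion at the archimedean places. This is self-contained and shows where each factor comes from:
\begin{itemize}
\item \(\rho[a]\) is \(\rho\) of the inverse of the constant automorphy factor \(((a_v^*)^{-1},({}^ta_v)^{-1})_{v\in\bfa}\) of \(t_{n,0}(a)_\infty\);
\item the normalization \(e^{2\pi\tr_\infty(T)}\) matches because \(\tr(Saa^*)=\tr(T)\);
\item the finite character comes from an archimedean translation together with the triviality of \(\psi_F\) on \(F\).
\end{itemize}
The citation is shorter but hides all of this. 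Your proof of the independence claim is the same as the paper's.

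There are two small corrections.

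First, the sign of \(Z\) is wrong. Left invariance under the rational elements \(n_n(-Z)\) and \(t_{n,0}(a)\) gives
\[
F_0(n_n(Y_\infty)g)=F_0\bigl(n_n(-Z)_\infty\,t_{n,0}(a)_\infty\,n_n(Y_\infty)\,\eta\bigr),
\]
hence \(F_{0,g}(Z_\infty)=\rho[a]F_{0,\eta}(aZ_\infty a^*-Z)\), not \(F_{0,\eta}(aZ_\infty a^*+Z)\). The resulting factor \(e^{-2\pi\sqrt{-1}\tr_\infty(SZ)}=\psi_{F,\bfh}(\tr(SZ))\) is the one appearing in \(\rho_S[a,Z]\). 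With \(+Z\) you would get the inverse character. This is also why your sentence about the archimedean translation being ``trivial'' is off: that translation is exactly what produces the character. Matching against \eqref{eq:Fourier-coefficient-covariance} cannot fix this sign, since that case has \(Z=0\). It only confirms the exponent of \(\rho[a]\), which the computation already determines.

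Second, \(u=h_M^{-1}h_M'\) need not lie in \(\GL_n(\widehat{\calO}_E)\). It stabilizes \(\frakN c_\gamma\widehat{\calO}_E^n\), so only \(c_\gamma^{-1}uc_\gamma\) is integral. That is exactly what your local argument at \(v\nmid\frakN\) uses, so the conclusion stands.
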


\begin{proof}
  The first identity follows from the Fourier coefficient transformation formula in \cite[Section~2.2]{Takeda2026Congruences},
  with \(\alpha=a^{-1}\) and \(X_0=a^{-1}Z(a^*)^{-1}\).
  Taking \((h,a,Z,\eta)=(h_M,a_M,Z_M,\gamma[M])\) gives \eqref{eq:fourier-coefficient-transformation}.

  For fixed \(h_M\),
  \eqref{eq:fourier-coefficient-transformation} shows that the value is independent of the parabolic decomposition.
  For another choice \(h_M'\),
  the element \(u=(h_M')^{-1}h_M\) preserves \(L_{\gamma,\frakN}^{(n)}\otimes\widehat{\calO}_E\) and satisfies \(u_v=I_n\) for \(v\mid\frakN\).
  Hence \(k=\gamma^{-1}t_{n,0}(u)\gamma\in K_n(\frakN)\),
  so
  \[
    A_{F_0}(t_{n,0}(h_M)\gamma,T)
    =A_{F_0}(t_{n,0}(h_M')\gamma k,T)
    =A_{F_0}(t_{n,0}(h_M')\gamma,T).
  \]
\end{proof}

For a finite poset \(P\),
let \(\mu_P\) denote its M\"obius function.

\begin{dfn}\label{def:primitive-coefficient}
  Let \(F_0\) be an automorphic form on \(G_n\) of weight \(\rho\) and level \(K_n(\frakN)\).
  We define its primitive Fourier coefficient at \((\gamma,T)\) by
  \begin{align*}
    A_{F_0}^{\prim}(\gamma,T)
     & =
    \sum_{M\in\calP_\gamma(T)}
    \mu_{\calP_\gamma(T)}(L_{\gamma,\frakN}^{(n)},M)
    A_{F_0}(t_{n,0}(h_M)\gamma,T) \\
     & =
    \sum_{M\in\calP_\gamma(T)}
    \mu_{\calP_\gamma(T)}(L_{\gamma,\frakN}^{(n)},M)
    \rho_{T_M}[a_M,Z_M]
    A_{F_0}(\gamma[M],T_M).
  \end{align*}
\end{dfn}

\begin{lem}\label{lem:primitive-reconstruction}
  For \(M\in\calP_\gamma(T)\), the map
  \[
    \begin{aligned}
      \{N\in\calP_\gamma(T):M\subseteq N\}
        & \xrightarrow{\sim}
      \calP_{\gamma[M]}(T_M), \\
      N & \longmapsto a_MN
    \end{aligned}
  \]
  is an isomorphism of posets.
  Moreover,
  \begin{equation}\label{eq:primitive-reconstruction}
    A_{F_0}(\gamma,T)
    =
    \sum_{M\in\calP_\gamma(T)}
    \rho_{T_M}[a_M,Z_M]
    A_{F_0}^{\prim}(\gamma[M],T_M).
  \end{equation}
\end{lem}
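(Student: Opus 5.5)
The argument has two parts: first the poset isomorphism, then the reconstruction formula \eqref{eq:primitive-reconstruction} by Möbius inversion over the posets \(\calP_\gamma(T)\), using the isomorphism to move between base points.

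\emph{The poset isomorphism.} Since \(a_M\in\GL_n(E)\) is a global linear map with \(a_MM=L_{\gamma[M],\frakN}^{(n)}\) and \(T_M=(a_M^*)^{-1}Ta_M^{-1}\), the map \(N\mapsto a_MN\) is an inclusion-preserving bijection on projective lattices, with inverse \(N'\mapsto a_M^{-1}N'\). It remains to check that the three conditions of Definition~\ref{def:primitive-lattice-poset} correspond.
\begin{enumerate}
\item For \(y=a_Mx\), we have \(y^*T_My=x^*Tx\), so condition~(2) is preserved.
\item Since \(\Tr_{E/F}((a_My)^*T_M(a_Mx))=\Tr_{E/F}(y^*Tx)\), we get \(a_M(M)_T^\sharp=(a_MM)_{T_M}^\sharp\). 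Here one should not use \(L^\sharp\) relative to \(L_{\gamma}\). Instead, for \(M\subseteq N\) the condition \(N\subseteq (L_{\gamma,\frakN}^{(n)})_T^\sharp\) must be compared with \(N\subseteq M_T^\sharp\). In fact, given \(M\subseteq N\), condition~(2) for \(N\) (integrality of the Hermitian form on \(N\)) implies \(N\subseteq N_T^\sharp\subseteq M_T^\sharp\), and conversely \(M_T^\sharp\subseteq L_T^\sharp\). So in the presence of (2), condition~(1) reduces to the containment of the base lattice, which is transported by \(a_M\).
\item For \(v\mid\frakN\), \(M_v=L_{\gamma,\frakN,v}^{(n)}\), and \(a_M\) carries \(M_v\) to \(L_{\gamma[M],\frakN,v}^{(n)}\). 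Hence \(N_v=M_v\) if and only if \((a_MN)_v=L_{\gamma[M],\frakN,v}^{(n)}\).
\end{enumerate}
It also has to be checked that \(T_M\in\Lambda_{\gamma[M]}(\frakN)\), as asserted before the lemma. A consequence of the isomorphism is that \(\mu_{\calP_{\gamma[M]}(T_M)}(a_MM,a_MN)=\mu_{\calP_\gamma(T)}(M,N)\).

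\emph{The reconstruction formula.} Define \(g(M)=A_{F_0}(t_{n,0}(h_M)\gamma,T)\), which is well defined by Lemma~\ref{lem:fourier-coefficient-transformation}. The key claim is
\[
\rho_{T_M}[a_M,Z_M]\,A_{F_0}^{\prim}(\gamma[M],T_M)=\sum_{N\supseteq M}\mu_{\calP_\gamma(T)}(M,N)\,g(N).
\]
Granting this, sum over \(M\). The right side becomes \(\sum_N\bigl(\sum_{L\subseteq M\subseteq N}\mu(M,N)\bigr)g(N)\), where \(L=L_{\gamma,\frakN}^{(n)}\) is the minimum. The inner sum equals \(\delta_{L,N}\) by the dual Möbius identity, so the total is \(g(L)=A_{F_0}(\gamma,T)\), which is \eqref{eq:primitive-reconstruction}.

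To prove the claim, expand \(A_{F_0}^{\prim}(\gamma[M],T_M)\) using Definition~\ref{def:primitive-coefficient}, indexing its poset by \(N'=a_MN\). This requires showing
\[
\rho_{T_M}[a_M,Z_M]\,A_{F_0}\bigl(t_{n,0}(h'_{N'})\gamma[M],T_M\bigr)=A_{F_0}\bigl(t_{n,0}(h_N)\gamma,T\bigr),
\]
where \(h'_{N'}\) carries \(L_{\gamma[M]}\otimes\widehat\calO_E\) to \(N'\otimes\widehat\calO_E\). One may take \(h'_{N'}=a_Mh_Nh_M^{-1}a_M^{-1}\), computed adelically, which is the identity at \(v\mid\frakN\).

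Then
\[
t_{n,0}(h_N)\gamma=t_{n,0}(h_Nh_M^{-1})\,t_{n,0}(a_M^{-1})\,n_n(Z_M)\,\gamma[M]\,k_M .
\]
Commuting \(t_{n,0}(a_M^{-1})n_n(Z_M)\) to the left, the Levi element becomes \(t_{n,0}(a_M^{-1})t_{n,0}(h'_{N'})\), and the unipotent part becomes \(n_n\) of a conjugated \(Z_M\). Apply the first identity of Lemma~\ref{lem:fourier-coefficient-transformation} to \(t_{n,0}(h'_{N'})\gamma[M]\) composed with the global element \(t_{n,0}(a_M^{-1})n_n(\cdot)\) on the left. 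Left multiplication by a rational element \(n_n(Z)\) after the finite-adelic twist contributes the character \(\psi_{F,\bfh}\), and \(t_{n,0}(a^{-1})\) contributes \(\rho[a]\) with \(T\mapsto S\). Together these give exactly \(\rho_{T_M}[a_M,Z_M]\), as in the cited transformation formula.

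\emph{Main obstacle.} The main difficulty is this last cocycle-type compatibility. One has to verify that the factor \(\psi_{F,\bfh}(\tr(T_M Z'))\) produced after conjugating \(n_n(Z_M)\) past \(t_{n,0}(h'_{N'})\) is the same as \(\psi_{F,\bfh}(\tr(T_MZ_M))\). This should follow from \(T_M\in\Lambda\) together with the integrality supplied by \(N\in\calP\), i.e. condition~(2): the difference \(Z'-Z_M\) pairs with \(T_M\) into \(\frakd_F^{-1}\), so the two characters agree. This is where condition~(2) is genuinely used.
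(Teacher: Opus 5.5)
Your proposal is correct and follows the paper's argument. You transport the poset via \(a_M\), expand \(A_{F_0}^{\prim}(\gamma[M],T_M)\) reindexed by \(N'=a_MN\), identify each term with \(A_{F_0}(t_{n,0}(h_N)\gamma,T)\) through the transformation formula, and conclude with \(\sum_{L\subseteq M\subseteq N}\mu_{\calP_\gamma(T)}(M,N)=\delta_{L,N}\). Your handling of condition~(1) and of the character compatibility is more explicit than the paper's; to close the latter, also use that \(k_M\in K_n(\frakN)\) forces \(Z_M\in c_{\gamma[M],v}\Her_n(\calO_{E,v})c_{\gamma[M],v}^*\) for \(v\nmid\frakN\), so \(\tr(T_MZ_M)\) and \(\tr(T_Mh'Z_Mh'^*)\) both lie in \(\frakd_{F,v}^{-1}\) by the \(T_M\)-integrality of \(L_{\gamma[M],\frakN,v}^{(n)}\) and \(N'_v\), while \(h'_v=I_n\) for \(v\mid\frakN\).
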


\begin{proof}
  Put \(L=L_{\gamma,\frakN}^{(n)}\) and \(P=\calP_\gamma(T)\).
  Since \(a_MM=L_{\gamma[M],\frakN}^{(n)}\) and every \(x\in E^n\) satisfies \((a_Mx)^*T_M(a_Mx)=x^*Tx\),
  the map \(N\mapsto a_MN\) gives the asserted poset isomorphism.
  It therefore identifies \(\mu_{\calP_{\gamma[M]}(T_M)}(a_MM,a_MN)\) with \(\mu_P(M,N)\).
  Write \(P_M=\calP_{\gamma[M]}(T_M)\),
  and choose \(h_{M,N'}\) for each lattice \(N'\in P_M\) at the cusp \(\gamma[M]\).
  Substituting the definition and then setting \(N'=a_MN\),
  we obtain
  \begin{align*}
     & \sum_{M\in P}
    \rho_{T_M}[a_M,Z_M]A_{F_0}^{\prim}(\gamma[M],T_M)               \\
     & \quad=\sum_{M\in P}\rho_{T_M}[a_M,Z_M]
    \sum_{N'\in P_M}\mu_{P_M}(a_MM,N')
    A_{F_0}(t_{n,0}(h_{M,N'})\gamma[M],T_M)                         \\
     & \quad=\sum_{M\in P}\sum_{\substack{N\in P                    \\M\subseteq N}}
    \mu_P(M,N)\rho_{T_M}[a_M,Z_M]
    A_{F_0}(t_{n,0}(h_{M,a_MN})\gamma[M],T_M)                       \\
     & \quad=\sum_{M\in P}\sum_{\substack{N\in P                    \\M\subseteq N}}
    \mu_P(M,N)A_{F_0}(t_{n,0}(h_N)\gamma,T)                         \\
     & \quad=\sum_{N\in P}
    \left(\sum_{L\subseteq M\subseteq N}\mu_P(M,N)\right)
    A_{F_0}(t_{n,0}(h_N)\gamma,T)                                   \\
     & \quad=\sum_{N\in P}\delta_{L,N}A_{F_0}(t_{n,0}(h_N)\gamma,T) \\
     & \quad=A_{F_0}(\gamma,T).
  \end{align*}
  The third equality follows by applying \eqref{eq:fourier-coefficient-transformation} first to \(M\in\calP_\gamma(T)\) and then to \(a_MN\in\calP_{\gamma[M]}(T_M)\).
  The last two equalities use the M\"obius identity and the choice \(h_L=I_n\).
\end{proof}

\subsection{Theta series and Siegel Eisenstein series}\label{subsec:theta-eisenstein}

We fix \(\gamma\in\calC^{(n)}(\frakN)\), \(T\in\Lambda_\gamma(\frakN)_{>0}\), and  \(m_0=\mu-n\),
and put \(L_\gamma=L_{\gamma,\frakN}^{(n)}\).
Choose \(\calG^{(r)}(\frakN)\subset\calC^{(r)}(\frakN)\) whose images represent \(G_r(F)\backslash G_r(\bbA_{F,\bfh})/K_r(\frakN)\).
For \(\delta\in\calG^{(r)}(\frakN)\), put \(L_\delta=L_{\delta,\frakN}^{(r)}\), \(\Xi_{\gamma,\delta}=\xi_r\iota_{n,r}(\gamma,\delta)\xi_r^{-1}\).
For \(X\in\M_{r,n}(E)\), set
\[
  R_{T,X}
  =
  \begin{pmatrix}
    T  & TX^*   \\
    XT & T[X^*]
  \end{pmatrix},
\]
\[
  P_{T,X}
  =
  P_{\mu;\bfk,\bfl}^{(n,r)}(R_{T,X}),
\]
and
\[
  \calX_{\gamma,\delta}(T)
  =
  \left\{
  X\in\M_{r,n}(E):
  X^*L_\delta\subset L_\gamma,\
  R_{T,X}\in
  \Lambda_{\Xi_{\gamma,\delta}}(\frakN)
  \right\}.
\]
For every finite place \(v\), put \(L_{\gamma,v}=L_\gamma\otimes_{\calO_E}\calO_{E,v}\) and \(L_{\delta,v}=L_\delta\otimes_{\calO_E}\calO_{E,v}\), and let \(\Lambda_{\Xi_{\gamma,\delta},v}(\frakN)\) denote the localization of \(\Lambda_{\Xi_{\gamma,\delta}}(\frakN)\).  Define
\[
  \calX_{\gamma,\delta,v}(T)
  =
  \left\{
  X\in\M_{r,n}(E_v):
  X^*L_{\delta,v}\subset L_{\gamma,v},
  R_{T,X}\in
  \Lambda_{\Xi_{\gamma,\delta},v}(\frakN)
  \right\}.
\]

For each \(v\in\bfa\), choose integers \(a_v,b_v\ge0\) such that \(a_v+b_v=n\), \(a_v\le\nu_v\), and \(b_v\le\kappa_v\),
and set \(a=(a_v)_{v\in\bfa}\) and \(b=(b_v)_{v\in\bfa}\).
Choose a splitting character \(\chi_T:E^\times\backslash\bbA_E^\times \rightarrow\bbC^\times\) for the Hermitian space \((E^n,T)\) such that \(\chi_T|_{\bbA_F^\times}=\chi_{E/F}^{n}\),
and \(\chi_{T,v}(z)=\left(z/|z|\right)^{a_v-b_v}\) for every \(v\in\bfa\);
see \cite{Kudla1994Splitting}.
Let \(\omega_T=\bigotimes_v\omega_{T,v}\) be the corresponding Weil representation,
and put \(\chi^{[n]}=\chi\chi_T\), \(\kappa_v^{[n]}=\kappa_v-b_v\), and \(\nu_v^{[n]}=\nu_v-a_v\).

Fix \(\delta\in\calG^{(r)}(\frakN)\).
For \(v\in\bfh\),
put
\[
  K_{\delta,v}
  =
  G_r(F_v)\cap
  \delta_vK_{r,v}(\frakN_v)\delta_v^{-1},
\]
and
\[
  \phi_{\delta,v}
  =
  \mathbbm1_{\calX_{\gamma,\delta,v}(T)}.
\]
If \(c=t_{r,0}(a)n_r(B) \in P_{r,0}(F_v)\cap K_{\delta,v}\),
then
\begin{equation}\label{eq:local-weil-transformation}
  \omega_{T,v}(c)\phi_{\delta,v}
  =
  \chi_{T,v}(\det a)\phi_{\delta,v}.
\end{equation}
Indeed, the Levi action preserves the two conditions defining \(\calX_{\gamma,\delta,v}(T)\),
and hence preserves its characteristic function.
For the unipotent part, the Schr\"odinger model gives
\[
  \bigl(\omega_{T,v}(n_r(B))\phi_{\delta,v}\bigr)(X)
  =
  \psi_{F_v}\!\left(\tr(T[X^*]B)\right)
  \phi_{\delta,v}(X).
\]
If \(X\in\calX_{\gamma,\delta,v}(T)\), then
\(R_{T,X}\in\Lambda_{\Xi_{\gamma,\delta},v}(\frakN)\).
Since \(n_r(B)\) belongs to the unipotent subgroup determined by
\(K_{\delta,v}\), the defining duality for
\(\Lambda_{\Xi_{\gamma,\delta},v}(\frakN)\) gives
\[
  \psi_{F_v}\!\left(\tr(T[X^*]B)\right)=1.
\]
The standard formula for the Levi action in the Weil representation
therefore gives
\[
  \omega_{T,v}(c)\phi_{\delta,v}
  =
  \chi_{T,v}(\det a)\phi_{\delta,v},
\]

Put
\[
  W_{\delta,v}
  =
  \Span_{\bbC}
  \left\{
  \omega_{T,v}(k)\phi_{\delta,v}:
  k\in K_{\delta,v}
  \right\},
\]
and \(d_v'=\dim_\bbC W_{\delta,v}\).
Choose a basis \(\phi_{\delta,v,1},\ldots,\phi_{\delta,v,d_v'}\) and its dual basis \(\lambda_{\delta,v,1},\ldots, \lambda_{\delta,v,d_v'}\).

For \(u\in\bbC\),
define \(\varepsilon_{\delta,v,i}(\,\cdot\,,u)\in I_{r,v}(u,\chi_v^{[n]})\) by
\[
  \varepsilon_{\delta,v,i}
  \bigl(
  t_{r,0}(a)n_r(B)k,u
  \bigr)
  =
  |\det(a^*a)|_v^u
  \chi_v^{[n]}(\det a^*)
  \lambda_{\delta,v,i}
  \bigl(
  \omega_{T,v}(k^{-1})
  \phi_{\delta,v}
  \bigr)
\]
for \(a\in\GL_r(E_v)\), \(B\in\Her_r(E_v)\) and \(k\in K_{\delta,v}\),
and set it equal to \(0\) outside \(P_{r,0}(F_v)K_{\delta,v}\).
Equation~\eqref{eq:local-weil-transformation} shows that this is
well defined.  At \(u=m_0/2\),
\begin{equation}\label{eq:dual-basis-identity}
  \sum_{i=1}^{d_v'}
  \varepsilon_{\delta,v,i}(k,m_0/2)
  \omega_{T,v}(k)\phi_{\delta,v,i}
  =
  \phi_{\delta,v}
\end{equation}
for \(k\in K_{\delta,v}\).

We write \(\mathbf i=(i_v)_{v\in\bfh}\) for a tuple satisfying \(1\le i_v\le d_v'\) for every \(v\in\bfh\).
Since \(d_v'=1\) for almost all \(v\), only finitely many choices of \(\mathbf i\) occur.
For \(\mathbf i=(i_v)_{v\in\bfh}\), set
\[
  \varepsilon_{\delta,\mathbf i}^{\bfh}(g_\bfh,u)
  =
  \prod_{v\in\bfh}
  \varepsilon_{\delta,v,i_v}(g_v,u).
\]
At the archimedean places, define
\[
  \varepsilon_\infty^{[n]}(g_\infty,u)
  =
  \prod_{v\in\bfa}
  \det(g_v)^{\nu_v^{[n]}}
  |\delta_r(g_v)|^{
  \kappa_v^{[n]}+\nu_v^{[n]}-2u
  }
  \delta_r(g_v)^{
  -\kappa_v^{[n]}-\nu_v^{[n]}
  },
\]
and
\[
  \varepsilon_{\delta,\mathbf i}^{[n]}(g,u)
  =
  \varepsilon_{\delta,\mathbf i}^{\bfh}(g_\bfh,u)
  \varepsilon_\infty^{[n]}(g_\infty,u).
\]
The associated Siegel Eisenstein series is
\[
  E_{\delta,\mathbf i}^{[n]}(g,u)
  =
  \sum_{\eta\in
    P_{r,0}(F)\backslash G_r(F)}
  \varepsilon_{\delta,\mathbf i}^{[n]}(\eta g,u),
\]
and we write
\[
  E_{\delta,\mathbf i}^{[n]}(g)
  =
  E_{\delta,\mathbf i}^{[n]}(g,m_0/2).
\]
When \(u=m_0/2\) lies outside the region of absolute convergence,
the right-hand side is defined by holomorphic continuation.
Since \(m_0=\mu-n\ge r\),
\cite[Proposition~17.7]{Shimura2000Arithmeticity} gives regularity at \(u=m_0/2\),
except possibly when \((\mu,F,\chi_0)=(n+r+1,\bbQ,\chi_{E/F}^{n+r+1})\) which is excluded in Theorem~\ref{thm:global-pullback}.

For \(X_\infty=(X_v)_{v\in\bfa}\in\prod_{v\in\bfa}\M_{r,n}(\bbC)\),
put
\[
  \varphi_{T,\infty}(X_\infty)
  =
  P_{\mu;\bfk,\bfl}^{(n,r)}
  (R_{T,X_\infty})
  \exp\!
  \left(
  -2\pi
  \tr_\infty(T[X_\infty^*])
  \right).
\]
This is a
\(V_{\sigma_n}\otimes V_{\sigma_r'}\)-valued Schwartz function.
For \(h=(h_\bfh,h_\infty)\in G_r(\bbA_F)\) and \(\phi_{\delta,\mathbf i}=\bigotimes_{v\in\bfh}\phi_{\delta,v,i_v}\), define
\begin{equation}\label{eq:general-r-theta-definition}
  \Theta_{\gamma,T,\delta}^{(r)}
  (h;\phi_{\delta,\mathbf i})
  =
  \sum_{X\in\M_{r,n}(E)}
  \bigl(
  \omega_{T,\bfh}(h_\bfh)
  \phi_{\delta,\mathbf i}
  \bigr)(X)
  \bigl(
  \omega_{T,\infty}(h_\infty)
  \varphi_{T,\infty}
  \bigr)(X),
\end{equation}
where the Weil representation acts trivially on the coefficient space
\(V_{\sigma_n}\otimes V_{\sigma_r'}\).

The pluriharmonicity and covariance of
\(P_{\mu;\bfk,\bfl}^{(n,r)}\) in
Proposition~\ref{prop:differential-criterion} imply
\[
  \frakp_{r,\infty}^{-}
  \Theta_{\gamma,T,\delta}^{(r)}
  (\,\cdot\,;\phi_{\delta,\mathbf i})
  =0
\]
and
\[
  \Theta_{\gamma,T,\delta}^{(r)}
  (hk_\infty;\phi_{\delta,\mathbf i})
  =
  \bigl(
  \tau_{b,a}\otimes\sigma_r'
  \bigr)(k_\infty)^{-1}
  \Theta_{\gamma,T,\delta}^{(r)}
  (h;\phi_{\delta,\mathbf i})
\]
for \(k_\infty\in K_{r,\infty}\).

Finally, for \(u\in\bbC\), define
\[
  \Phi_{\gamma,T,\delta}(\,\cdot\,,u)
  =
  \sum_{\mathbf i}
  \Theta_{\gamma,T,\delta}^{(r)}
  (\,\cdot\,;\phi_{\delta,\mathbf i})
  E_{\delta,\mathbf i}^{[n]}(\,\cdot\,,u),
\]
and put
\[
  \Phi_{\gamma,T}(\,\cdot\,,u)
  =
  \bigl(
  \Phi_{\gamma,T,\delta}(\,\cdot\,,u)
  \bigr)_{\delta\in\calG^{(r)}(\frakN)},
  \qquad
  \Phi_{\gamma,T}
  =
  \bigl(
  \Phi_{\gamma,T,\delta}(\,\cdot\,,m_0/2)
  \bigr)_{\delta\in\calG^{(r)}(\frakN)}.
\]
Since \((\tau_{b,a}\otimes\sigma_r')\otimes\tau_{\kappa^{[n]},\nu^{[n]}}=\tau_{\kappa,\nu}\otimes\sigma_r'=\rho_r'\) and
\eqref{eq:dual-basis-identity} gives the required invariance at the finite places, we have
\[
  \Phi_{\gamma,T}
  \in
  V_{\sigma_n}\otimes_{\bbC}
  \calA_r(\rho_r',\frakN).
\]

For later use, we record the Fourier expansion of \(\Phi_{\gamma,T}\).
For \(Q_1\in\Her_r(E)\) with \(Q_1\ge0\), put
\[
  B_{\gamma,T,\delta,\mathbf i}(Q_1)
  =
  \sum_{\substack{X\in\M_{r,n}(E)\\T[X^*]=Q_1}}
  P_{T,X}\phi_{\delta,\mathbf i}(X).
\]
Then, for \(Q>0\),
\begin{equation}\label{eq:general-r-Fourier-convolution}
  A_{\Phi_{\gamma,T}}(\delta,Q)
  =
  \sum_{\mathbf i}
  \sum_{\substack{Q_1+Q_2=Q\\Q_1,Q_2\ge0}}
  B_{\gamma,T,\delta,\mathbf i}(Q_1)
  A_{E_{\delta,\mathbf i}^{[n]}}(Q_2).
\end{equation}
The sums are finite.

\subsection{Primitive Fourier coefficient formula}\label{subsec:primitive-formula}

For a finite-adelic cusp representative \(g\in G_n(\bbA_{F,\bfh})\) and a positive definite Fourier index \(S\),
write \(A_{\Kl}(g,S)\) and \(A_{\Si}(g,S)\) for the Fourier coefficients of \([f]_{\kappa,\nu,\frakN}^{n}(\,\cdot\,;\chi)\) and \(\widetilde E_{n,\kappa,\nu,\frakN}(\,\cdot\,;\chi)\),
respectively.
Their primitive coefficients in the sense of Definition~\ref{def:primitive-coefficient} are denoted by \(A_{\Kl}^{\prim}(\gamma,S)\) and \(A_{\Si}^{\prim}(\gamma,S)\).
Recall also that \(c_f=C_{n+r,\kappa,\nu}^{(\frakN)}(f;\chi)/(f^\dagger,f^\dagger)_{G_r}\).

\begin{thm}\label{thm:main-petersson-formula}
  Assume \(\mu\ge n+r\).
  If \(\mu=n+r+1\) and \(F=\bbQ\), assume \(\chi_0\ne\chi_{E/F}^{n+r+1}\).
  If \(\mu=n+r\) and \(n=r+1\), assume that \(L^{(\frakN)}(1,f,\St)\) is holomorphic and nonzero.
  Then, for every \(\gamma\in\calC^{(n)}(\frakN)\) and
  \(T\in\Lambda_\gamma(\frakN)_{>0}\),
  \begin{equation}\label{eq:general-r-arithmetic-primitive}
    c_fA_{\Kl}^{\prim}(\gamma,T)
    =
    A_{\Si}^{\prim}(\gamma,T)
    \frac{(\Phi_{\gamma,T},f^\dagger)_{G_r}}
    {(f^\dagger,f^\dagger)_{G_r}}.
  \end{equation}
\end{thm}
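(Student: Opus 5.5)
Take the \(T\)-th Fourier coefficient in the first variable \(g\) of both sides of Theorem~\ref{thm:global-pullback}, at the cusp \(t_{n,0}(h_M)\gamma\), and apply the M\"obius sum of Definition~\ref{def:primitive-coefficient}. The right-hand side becomes \(C_{n+r,\kappa,\nu}^{(\frakN)}(f;\chi)A_{\Kl}^{\prim}(\gamma,T)\). On the left, the Fourier coefficient and the Petersson integral over \(h\) commute by absolute convergence: \(f^\dagger\) is rapidly decreasing and the kernel has moderate growth. We are thus reduced to computing the partial Fourier coefficient
\[
  \int_{[\Her_n]}\calE_{\frakN}(n_n(Y)g,h)\,\overline{\psi_F(\tr(TY))}\,dY
\]
as a function of \(h\in G_r(\bbA_F)\). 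We first work for \(\Re(s)\) large, where the degree-\((n+r)\) Siegel Eisenstein series is absolutely convergent and its Fourier expansion is available. We then continue to \(s=\mu/2\) using the regularity results of \cite[Proposition~17.7]{Shimura2000Arithmeticity}, invoked exactly as in Theorem~\ref{thm:global-pullback} and in the definition of \(E^{[n]}_{\delta,\mathbf i}\).

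\emph{Key step: the Fourier--Jacobi type expansion of the kernel.} Expand \(E_{n+r,\kappa,\nu,\frakN}(\,\cdot\,\theta_{n,r}^{-1})\) at the cusp \(\Xi_{\gamma,\delta}\) in its Fourier--Jacobi form with respect to the Klingen-type parabolic \(P_{n+r,r}\). The coefficients indexed by \(\bigl(\begin{smallmatrix}T&*\\ *&*\end{smallmatrix}\bigr)\) with \(T>0\) of size \(n\) then decompose via the decomposition \(R=R_{T,X}+\mathrm{diag}(0,Q_2)\). This identifies the \(T\)-th partial coefficient, restricted to \(G_r\), as a theta series in \(X\) times a Siegel Eisenstein series of degree \(r\) with character \(\chi\chi_T\) and weight shifted by \((a,b)\), evaluated at \(m_0/2\). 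This is the standard doubling/Fourier--Jacobi identity for the Siegel Eisenstein series. At the archimedean places, applying \(\bbD_{\bfk,\bfl}\) produces exactly the polynomial \(P_{T,X}\), by the exponential identity in Subsection~\ref{subsec:differential-operators}; this gives \(\varphi_{T,\infty}\).

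\emph{Finite places and the main obstacle.} The coefficient \(A_{\Si}\) of the full degree-\(n\) series at index \(T\) appears with lattice conditions. Those \(X\) with \(X^*L_\delta\not\subset L_\gamma\) correspond to cusps \(M\supsetneq L_\gamma\), namely \(M=L_\gamma+X^*L_\delta\). The M\"obius inversion over \(\calP_\gamma(T)\), using Lemma~\ref{lem:primitive-reconstruction} and the covariance \eqref{eq:Fourier-coefficient-covariance}, cancels these contributions. What remains factors as \(A_{\Si}^{\prim}(\gamma,T)\) (the normalization \(\bbL^{(\frakN)}(n,\mu;\chi)\) being precisely the degree-\(n\) one) times the theta series with test function \(\mathbbm1_{\calX_{\gamma,\delta,v}(T)}\). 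The local Siegel series at level \(\frakN\) factors through the support computation of Lemma~\ref{lem:level-big-cell-criterion}, analogous to Proposition~\ref{prop:principal-local-pullback}. The dual-basis identity \eqref{eq:dual-basis-identity} then rewrites the product of the finite-place theta kernel and the residual section as \(\sum_{\mathbf i}\Theta^{(r)}_{\gamma,T,\delta}(\,\cdot\,;\phi_{\delta,\mathbf i})E^{[n]}_{\delta,\mathbf i}\), that is, as \(\Phi_{\gamma,T,\delta}\).

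The main obstacle is this lattice bookkeeping: showing that after M\"obius inversion exactly the condition \(X^*L_\delta\subset L_\gamma\) survives, uniformly over the non-free projective lattices, and that the leftover Eisenstein factor is independent of \(M\). I would first verify it place by place, where all lattices are free, and then glue. Finally, integrating against \(f^\dagger\) over \(G_r(F)\backslash G_r(\bbA_F)\), summing over \(\delta\in\calG^{(r)}(\frakN)\), and dividing by \((f^\dagger,f^\dagger)_{G_r}\) gives \eqref{eq:general-r-arithmetic-primitive}.
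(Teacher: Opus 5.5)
Your lattice bookkeeping, with the enlarged lattice \(M+X^*L_\delta\) and the M\"obius cancellation leaving exactly \(X^*L_\delta\subseteq L_\gamma\), agrees with the paper. Your key step, however, is false once the differential operator is genuinely mixed. You identify the \(T\)-th partial Fourier coefficient of the \emph{differentiated} kernel, as a function on \(G_r\), with theta series carrying \(P_{T,X}\) times degree-\(r\) Eisenstein series. But the exponential identity of Subsection~\ref{subsec:differential-operators} acts on each full Fourier term \(e^{2\pi\sqrt{-1}\tr_\infty(RZ)}\) of the degree-\((n+r)\) series. Write \(R=R_{T,X}+\diag(0,Q_2)\) with \(Q_2=Q-T[X^*]\). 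That term acquires \(P^{(n,r)}_{\mu;\bfk,\bfl}(R)\), which equals \(P_{T,X}\) only when \(Q_2=0\). Equivalently, the components of \(\bbD_{\bfk,\bfl}\) along the \(G_r\)-block also differentiate the Eisenstein factor.

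In general \(P^{(n,r)}_{\mu;\bfk,\bfl}\) does depend on the lower-right block. In the degree-two case with \(\bfk=\bfl=(w-m)\), \(P_{T,x}\) involves the entry \(T[x^*]\) of \(R_{T,x}\) through \(C_{T,x}\). So the \(Q\)-th coefficient of your partial coefficient carries \(P^{(n,r)}_{\mu;\bfk,\bfl}(R_{T,X}+\diag(0,Q-T[X^*]))\), whereas \eqref{eq:general-r-Fourier-convolution} for \(\Phi_{\gamma,T}\) carries \(P_{T,X}\) in every term. Hence the M\"obius-summed partial coefficient is not \(A_{\Si}^{\prim}(\gamma,T)\Phi_{\gamma,T}\) as a form. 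The two only have the same Petersson product with \(f^\dagger\), and that is exactly what must be proved.

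Separately, your step presupposes a Fourier--Jacobi factorization of the higher-rank coefficients of the level-\(\frakN\) series. It also needs degree-\(r\) sections matching \(\varepsilon_{\delta,v,i}\), including their support, at \(v\mid\frakN\). Calling this standard does not establish it, and Remark~\ref{rem:siegel-fourier-coefficients} deliberately avoids needing it.

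The missing idea is to pair with \(f^\dagger\) \emph{before} comparing, and then show that only the terms with \(Q_2=0\) survive. The paper does this by unfolding the pullback integral to the open orbit of \(\xi_r\). Fourier orthogonality in \(U\), in the open-orbit coordinate \(C\), and in \(V\) forces \(A=T\), \(Q=T[X^*]=Q'\) and \(\rank_E X=r\). So only the rank-\(n\) indices \(R_{T,X}\) occur, as in \eqref{eq:primitive-after-Fourier-orthogonality}, and for these the operator really does produce \(P_{T,X}\). The remaining steps are then:
\begin{itemize}
\item Shimura's singular-coefficient formula gives \eqref{eq:primitive-singular-coefficient} with \(N=M+X^*L_\delta\).
\item The M\"obius sum gives \eqref{eq:primitive-mobius-cancellation}.
\item \((\Phi_{\gamma,T},f^\dagger)_{G_r}\) is computed independently by unfolding \(E^{[n]}_{\delta,\mathbf i}\). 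There \eqref{eq:dual-basis-identity} collapses \(\sum_{\mathbf i}\) to \(\mathbbm1_{\calX_{\gamma,\delta}(T)}\), and \(V\)-orthogonality again leaves \(Q'=T[X^*]\).
\end{itemize}
Comparing the two unfolded expressions gives the theorem without identifying the kernel's Fourier--Jacobi coefficient as an automorphic form.

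If one of \(\bfk,\bfl\) vanishes, covariance forces \(P^{(n,r)}_{\mu;\bfk,\bfl}\) to depend only on the upper-right block. In that case a B\"ocherer-type route like yours could work, but the local matching at \(v\mid\frakN\) would then be the real content of the proof.
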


\begin{proof}
  Put \(L=L_{\gamma,\frakN}^{(n)}\) and
  \(\calP=\calP_\gamma(T)\).
  For \(\Re(s)>n+r\), set
  \[
    \widetilde E_{n+r,\kappa,\nu,\frakN}^{[n],\theta}(g,s;\chi)
    =
    \bbL^{(\frakN)}(n,\mu;\chi)
    E_{n+r,\kappa,\nu,\frakN}
    (g\theta_{n,r}^{-1},s;\chi)
  \]
  and
  \[
    \calE_{\frakN}(g,h;s)
    =
    (2\pi\sqrt{-1})^{-d_{\bfk,\bfl}}
    \bbD_{\bfk,\bfl}
    \widetilde E_{n+r,\kappa,\nu,\frakN}^{[n],\theta}
    (\iota_{n,r}(g,h),s;\chi).
  \]
  Thus
  \(\calE_{\frakN}(g,h;\mu/2)=\calE_{\frakN}(g,h)\).

  By Theorem~\ref{thm:global-pullback} and
  Definition~\ref{def:primitive-coefficient},
  \begin{align}
     & C_{n+r,\kappa,\nu}^{(\frakN)}(f;\chi)
    A_{\Kl}^{\prim}(\gamma,T)
    \notag                                   \\
     & =
    e^{2\pi\tr_\infty(T)}
    \sum_{M\in\calP}
    \mu_\calP(L,M)
    \int_{[\Her_n]}
    \bigl(
    \calE_{\frakN}
    (n_n(U)t_{n,0}(h_M)\gamma,*;\mu/2),
    f^\dagger
    \bigr)_{G_r}
    \overline{\psi_F(\tr(TU))}
    \,dU.
    \label{eq:primitive-pullback-sum}
  \end{align}

  We use the global pullback identity after continuation to \(s=\mu/2\).
  At this point the differentiated pullback kernel is holomorphic and of moderate growth.
  Since \(f^\dagger\) is cuspidal, the Petersson integral converges absolutely.
  The \(U\)-integration is over a compact quotient,
  and the Fourier expansions below may be inserted first on truncated Siegel sets and then integrated term by term;
  the limit follows from the rapid decay of \(f^\dagger\).

  For \(\delta\in\calG^{(r)}(\frakN)\), put
  \[
    \fraku_\delta
    =
    \fraku_\delta(\frakN)
    =
    \{V\in\Her_r(E):
    n_r(V)\in G_r(F)\cap
    \delta K_r(\frakN)\delta^{-1}\}.
  \]
  Let
  \[
    \calA_\delta
    =
    \{a\in\GL_r(E):
    t_{r,0}(a)\in
    G_r(F)\cap\delta K_r(\frakN)\delta^{-1}\},
  \]
  let \(Z_\delta\) be the kernel of the action
  \(Y\mapsto aYa^*\) of \(\calA_\delta\) on
  \((\Her_r(\bbC)_{>0})^\bfa\), put
  \(e_\delta=\#Z_\delta\), and choose a fundamental domain
  \(\calF_\delta\) for \(\calA_\delta/Z_\delta\).

  Fix \(M\in\calP\), and put
  \[
    \Xi_{M,\delta}
    =
    \xi_r\iota_{n,r}
    \bigl(t_{n,0}(h_M)\gamma,\delta\bigr)\xi_r^{-1}.
  \]

  By \cite[Fact~19 and Proposition~20]{Takeda2026PullbackFormula},
  after unfolding and pairing with \(f^\dagger\),
  only the open orbit represented by \(\xi_r\) contributes.
  We first describe the additive quotient occurring in this orbit.
  Put
  \(J=\widetilde I_r
  =
  \begin{pmatrix}
    0 \\
    I_r
  \end{pmatrix}
  \in\M_{n,r}(E)\).
  For \(U_0\in\Her_n\) and \(V_0\in\Her_r\), set
  \[
    S=
    \begin{pmatrix}
      0   & J \\
      J^* & 0
    \end{pmatrix},
    \qquad
    W=
    \begin{pmatrix}
      U_0 & 0   \\
      0   & V_0
    \end{pmatrix}.
  \]
  Direct multiplication gives
  \[
    \xi_r
    \iota_{n,r}\bigl(n_n(U_0),n_r(V_0)\bigr)
    \xi_r^{-1}
    =
    \begin{pmatrix}
      I-WS & W    \\
      -SWS & I+SW
    \end{pmatrix},
  \]
  where \(SWS
  =
  \begin{pmatrix}
    JV_0J^* & 0       \\
    0       & J^*U_0J
  \end{pmatrix}\).
  Hence
  \[
    \xi_r
    \iota_{n,r}\bigl(n_n(U_0),n_r(V_0)\bigr)
    \xi_r^{-1}
    \in P_{n+r,0}
  \]
  if and only if \(V_0=0\), \(J^*U_0J=0\).
  Writing
  \[
    U_0=
    \begin{pmatrix}
      U_{11}   & U_{12} \\
      U_{12}^* & U_{22}
    \end{pmatrix},
  \]
  the condition \(J^*U_0J=0\) is equivalent to \(U_{22}=0\).
  Therefore the map \(n_n(U_0)\longmapsto U_{22}\)
  identifies \((N_n\cap P_{n,r})\backslash N_n\simeq\Her_r\).
  We write \(C\) for this quotient coordinate.
  Accordingly, the corresponding adelic Fourier integration is over \(C\in{[\Her_r]}\).

  Write \(Z=V+\sqrt{-1}Y\in\prod_{v\in\bfa}\calH_r\) with \(V\in(\Her_r(\mathbf R))^\bfa\) and \(Y\in(\Her_r(\mathbf R)_{>0})^\bfa\),
  and
  \[
    R=
    \begin{pmatrix}
      A   & B \\
      B^* & Q
    \end{pmatrix}
    \in\Her_{n+r}(E).
  \]
  On the component represented by \(\delta\), we have
  \[
    f_\delta^\dagger(V+\sqrt{-1}Y)
    =
    \sum_{Q'>0}
    A_{f^\dagger}(\delta,Q')\,
    e^{2\pi\sqrt{-1}\tr_\infty(Q'V)}
    e^{-2\pi\tr_\infty(Q'Y)}.
  \]
  The \(R\)-th Fourier term of the differentiated
  degree-\((n+r)\) Siegel Eisenstein series is
  \[
    A_{\widetilde E_{n+r,\kappa,\nu,\frakN}^{[n],\theta}}
    (\Xi_{M,\delta},R)\,
    P_{\mu;\bfk,\bfl}^{(n,r)}(R)
    e^{2\pi\sqrt{-1}\tr_\infty(AU+QV)}
    e^{-2\pi\tr_\infty(A+QY)},
  \]
  together with its character on the remaining open-orbit additive
  quotient.

  For \(R=
  \begin{pmatrix}
    A   & B \\
    B^* & Q
  \end{pmatrix}\),
  put \(X=B^*T^{-1}\).
  For \(C\in\Her_r(\bbA_E)\),
  define
  \[
    \calC_{T,B}(C)
    =
    \begin{pmatrix}
      X^*CX & -X^*C \\
      -CX   & C
    \end{pmatrix}.
  \]
  Then
  \begin{align}
    \tr\bigl(R\calC_{T,B}(C)\bigr)
     & =
    \tr\!\left(
    \bigl(
    Q-XTX^*+X(A-T)X^*
    \bigr)C
    \right)                                      \notag \\
     & =
    \tr\!\left(
    \bigl(
    Q-B^*T^{-1}B
    +B^*T^{-1}(A-T)T^{-1}B
    \bigr)C
    \right).
    \label{eq:open-orbit-C-character}
  \end{align}
  Substituting these Fourier expansions into
  \eqref{eq:primitive-pullback-sum}, and decomposing the Petersson
  integral into its \(\delta\)-components, gives
  \begin{align}
     & C_{n+r,\kappa,\nu}^{(\frakN)}(f;\chi)
    A_{\Kl}^{\prim}(\gamma,T)
    \notag                                   \\
     & =
    \sum_{M\in\calP}
    \mu_\calP(L,M)
    \sum_{\delta\in\calG^{(r)}(\frakN)}
    \frac{\vol(K_r(\frakN))}{e_\delta}
    \sum_R\sum_{Q'>0}
    e^{2\pi\tr_\infty(T)}
    e^{-2\pi\tr_\infty(A)}
    \notag                                   \\
     & \quad\times
    A_{\widetilde E_{n+r,\kappa,\nu,\frakN}^{[n],\theta}}
    (\Xi_{M,\delta},R)
    P_{\mu;\bfk,\bfl}^{(n,r)}(R)
    \notag                                   \\
     & \quad\times
    \int_{{[\Her_n]}}
    \int_{{[\Her_r]}}
    \psi_F\!\left(
    \tr\bigl((A-T)(U+X^*CX)\bigr)
    +
    \tr\bigl((Q-XTX^*)C\bigr)
    \right)
    \,dC\,dU
    \notag                                   \\
     & \quad\times
    \int_{\calF_\delta}
    \int_{\fraku_\delta\backslash(\Her_r(\bbC))^\bfa}
    e^{2\pi\sqrt{-1}\tr_\infty((Q-Q')V)}
    e^{-2\pi\tr_\infty((Q+Q')Y)}
    \notag                                   \\
     & \qquad\qquad\times
    \left(
    1\otimes
    \left\langle
    \rho_r'
    \bigl((Y_v,{}^tY_v)_{v\in\bfa}\bigr)\,\cdot,
    A_{f^\dagger}(\delta,Q')
    \right\rangle_{\sigma_r'}
    \right)
    \,dV\,d\mu_r(Y),
    \label{eq:primitive-full-Fourier-substitution}
  \end{align}
  where \(d\mu_r(Y) = \prod_{v\in\bfa} \det(Y_v)^{-2r}\,dY_v\).

  For fixed \(C\), the translation \(U\mapsto U+X^*CX\) preserves the Haar measure on \({[\Her_n]}\).
  Hence Fourier orthogonality in \(U\) gives
  \[
    A=T.
  \]
  The remaining \(C\)-character is therefore \(C\mapsto \psi_F\!\left(\tr\bigl((Q-XTX^*)C\bigr)\right)\),
  and Fourier orthogonality in \(C\) gives
  \[
    Q=T[X^*].
  \]

  Finally, Fourier orthogonality in \(V\) forces
  \[
    Q'=Q=T[X^*].
  \]
  Since \(Q'>0\) and \(T>0\), this is equivalent to \(\rank_E(X)=r\).

  Hence, if we put
  \[
    c_\delta
    =
    \frac{\vol(K_r(\frakN))}{e_\delta}
    \covol_{dV}(\fraku_\delta),
  \]
  then \eqref{eq:primitive-full-Fourier-substitution} becomes
  \begin{align}
     & C_{n+r,\kappa,\nu}^{(\frakN)}(f;\chi)
    A_{\Kl}^{\prim}(\gamma,T)
    \notag                                   \\
     & =
    \sum_{M\in\calP}\mu_\calP(L,M)
    \sum_{\delta\in\calG^{(r)}(\frakN)}
    c_\delta
    \sum_{\substack{
    X\in\M_{r,n}(E)                          \\
        \rank_E(X)=r
      }}
    A_{\widetilde E_{n+r,\kappa,\nu,\frakN}^{[n],\theta}}
    (\Xi_{M,\delta},R_{T,X})
    \notag                                   \\
     & \quad\times
    \int_{\calF_\delta}
    e^{-4\pi\tr_\infty(T[X^*]Y)}
    \left(
    1\otimes
    \left\langle
    \rho_r'
    \bigl((Y_v,{}^tY_v)_{v\in\bfa}\bigr)\,\cdot,
    A_{f^\dagger}\bigl(\delta,T[X^*]\bigr)
    \right\rangle_{\sigma_r'}
    \right)
    P_{T,X}\,
    d\mu_r(Y).
    \label{eq:primitive-after-Fourier-orthogonality}
  \end{align}

  Moreover,
  \[
    (2\pi\sqrt{-1})^{-d_{\bfk,\bfl}}
    \bbD_{\bfk,\bfl}
    e^{2\pi\sqrt{-1}\tr_\infty(R_{T,X}Z)}
    =
    P_{T,X}
    e^{2\pi\sqrt{-1}\tr_\infty(R_{T,X}Z)}.
  \]

  Fix \(M\in\calP\), \(\delta\in\calG^{(r)}(\frakN)\),
  and a Fourier index \(R_{T,X}\) occurring in \eqref{eq:primitive-after-Fourier-orthogonality}.
  Put \(L_\delta=L_{\delta,\frakN}^{(r)}\).
  Since \(R_{T,X}
    =
    {}^t\!\begin{pmatrix}
    I_n & X
  \end{pmatrix}
  T
  \begin{pmatrix}
    I_n & X^*
  \end{pmatrix}\),
  we have
  \[
    \begin{pmatrix}x^*&y^*\end{pmatrix}
    R_{T,X}
    \begin{pmatrix}x'\\y'\end{pmatrix}
    =
    (x+X^*y)^*T(x'+X^*y')
  \]
  for \(x,x'\in M\) and \(y,y'\in L_\delta\).
  Hence the map
  \begin{eqnarray*}
    M\oplus L_\delta&\longrightarrow& N,\\(x,y)&\longmapsto& x+X^*y
  \end{eqnarray*}
  induces
  \begin{equation}\label{eq:primitive-quotient-lattice}
    \frac{M\oplus L_\delta}
    {(M\oplus L_\delta)\cap\ker R_{T,X}}
    \cong
    M+X^*L_\delta,
  \end{equation}
  under which the nondegenerate Hermitian form induced by
  \(R_{T,X}\) is identified with \(T\).

  We therefore put
  \[
    N=M+X^*L_\delta.
  \]
  If the corresponding finite-adelic Fourier coefficient is nonzero,
  then \(R_{T,X}\) is integral on \(M\oplus L_\delta\),
  and hence \(\Tr_{E/F}(z^*Tz')\in\frakd_F^{-1}\) for \(z,z'\in N\).
  Since \(L\subseteq M\subseteq N\),
  this implies \(N\subseteq L_T^\sharp\).
  For \(v\mid\frakN\), we have \(M_v=L_v\) by the definition of \(\calP_\gamma(T)\).
  Moreover,
  the local Fourier coefficient attached to \(R_{T,X,v}\) vanishes unless \(X^*L_{\delta,v}\subseteq L_v\).
  Hence \(N_v=M_v+X^*L_{\delta,v}=L_v\) for \(v\mid\frakN\).
  Consequently,
  \[
    N\in\calP_\gamma(T).
  \]

  By the singular Fourier coefficient formula \cite[\S\S18--19]{Shimura1997EulerProducts},
  applied to \eqref{eq:primitive-quotient-lattice},
  the nondegenerate quotient is the lattice \(N\) equipped with the Hermitian form \(T\).
  With the normalization of \(\widetilde E_{n+r,\kappa,\nu,\frakN}^{[n],\theta}\) fixed above,
  the formula gives
  \begin{equation}\label{eq:primitive-singular-coefficient}
    A_{\widetilde E_{n+r,\kappa,\nu,\frakN}^{[n],\theta}}
    \left(
    \Xi_{M,\delta},
    R_{T,X}
    \right)
    =
    A_{\Si}
    \bigl(t_{n,0}(h_N)\gamma,T\bigr).
  \end{equation}

  We next sum over \(M\).
  Put \(K=X^*L_\delta\).
  For \(M\in\calP\) with \(M+K\in\calP\),
  Lemma~\ref{lem:primitive-reconstruction},
  applied to the upper interval above \(M+K\),
  gives
  \begin{equation}\label{eq:primitive-upper-reconstruction}
    A_{\Si}
    \bigl(t_{n,0}(h_{M+K})\gamma,T\bigr)
    =
    \sum_{\substack{N'\in\calP\\M+K\subseteq N'}}
    (\tau_{\kappa,\nu})_{T_{N'}}[a_{N'},Z_{N'}]
    A_{\Si}^{\prim}(\gamma[N'],T_{N'}).
  \end{equation}
  Hence
  \begin{align}
     & \sum_{\substack{M\in\calP \\M+K\in\calP}}
    \mu_\calP(L,M)
    A_{\Si}
    \bigl(t_{n,0}(h_{M+K})\gamma,T\bigr)
    \notag                       \\
     & =
    \sum_{N'\in\calP}
    (\tau_{\kappa,\nu})_{T_{N'}}[a_{N'},Z_{N'}]
    A_{\Si}^{\prim}(\gamma[N'],T_{N'})
    \mathbf1_{K\subseteq N'}
    \sum_{L\subseteq M\subseteq N'}
    \mu_\calP(L,M)
    \notag                       \\
     & =
    \mathbf1_{K\subseteq L}
    (\tau_{\kappa,\nu})_{T_L}[a_L,Z_L]
    A_{\Si}^{\prim}(\gamma[L],T_L)
    \notag                       \\
     & =
    \mathbf1_{K\subseteq L}
    A_{\Si}^{\prim}(\gamma,T),
    \label{eq:primitive-mobius-cancellation}
  \end{align}
  where \(\mathbf1_{K_1\subseteq K_2}\) denotes the indicator of the condition \(K_1\subseteq K_2\).
  Here the second equality uses
  \[
    \sum_{L\subseteq M\subseteq N'}
    \mu_\calP(L,M)
    =
    \delta_{L,N'}.
  \]
  Thus the M\"obius sum leaves the condition \(X^*L_\delta\subseteq L\).
  Together with the condition \(R_{T,X}\in\Lambda_{\Xi_{\gamma,\delta}}(\frakN)\) already imposed on the Fourier index,
  this gives \(X\in\calX_{\gamma,\delta}(T)\).
  Therefore \eqref{eq:primitive-after-Fourier-orthogonality} becomes
  \begin{align}
     & C_{n+r,\kappa,\nu}^{(\frakN)}(f;\chi)
    A_{\Kl}^{\prim}(\gamma,T)
    \notag                                   \\
     & =
    A_{\Si}^{\prim}(\gamma,T)
    \sum_{\delta\in\calG^{(r)}(\frakN)}
    c_\delta
    \sum_{\substack{
    X\in\calX_{\gamma,\delta}(T)             \\
        \rank_E(X)=r
      }}
    \int_{\calF_\delta}
    e^{-4\pi\tr_\infty(T[X^*]Y)}
    \notag                                   \\
     & \qquad\qquad\times
    \left(
    1\otimes
    \left\langle
    \rho_r'
    \bigl((Y_v,{}^tY_v)_{v\in\bfa}\bigr)\,\cdot,
    A_{f^\dagger}\bigl(\delta,T[X^*]\bigr)
    \right\rangle_{\sigma_r'}
    \right)
    P_{T,X}\,
    d\mu_r(Y).
    \label{eq:primitive-after-mobius}
  \end{align}

  We now show that
  \begin{align}
    (\Phi_{\gamma,T},f^\dagger)_{G_r}= & \sum_{\delta\in\calG^{(r)}(\frakN)}
    c_\delta
    \sum_{\substack{
    X\in\calX_{\gamma,\delta}(T)                                             \\
        \rank_E(X)=r
      }}
    \int_{\calF_\delta}
    e^{-4\pi\tr_\infty(T[X^*]Y)}
    \notag                                                                   \\
                                       & \qquad\times
    \left(
    1\otimes
    \left\langle
    \rho_r'
    \bigl((Y_v,{}^tY_v)_{v\in\bfa}\bigr)\,\cdot,
    A_{f^\dagger}\bigl(\delta,T[X^*]\bigr)
    \right\rangle_{\sigma_r'}
    \right)
    P_{T,X}\,
    d\mu_r(Y).
    \label{eq:primitive-open-sum-after-mobius}
  \end{align}

  Put \(u_0=(\mu-n)/2\).
  For \(\Re(u)>r\), the Eisenstein series \(E_{\delta,\mathbf i}^{[n]}(\,\cdot\,,u)\) is absolutely convergent.
  Decomposing the Petersson pairing into its \(\delta\)-components and unfolding \(E_{\delta,\mathbf i}^{[n]}(\,\cdot\,,u)\),
  we obtain
  \begin{align}
     & \bigl(
    \Phi_{\gamma,T}(\,\cdot\,,u),
    f^\dagger
    \bigr)_{G_r}
    \notag    \\
     & =
    \sum_{\delta\in\calG^{(r)}(\frakN)}
    \frac{\vol(K_r(\frakN))}{e_\delta}
    \sum_{\mathbf i}
    \int_{P_{r,0}(F)\backslash G_r(\bbA_F)}
    \left\langle
    \Theta_{\gamma,T,\delta}^{(r)}
    (h;\phi_{\delta,\mathbf i})
    \varepsilon_{\delta,\mathbf i}^{[n]}(h,u),
    f^\dagger(h)
    \right\rangle_r
    \,dh.
    \label{eq:primitive-Phi-unfolding}
  \end{align}

  The identity obtained for \(\Re(u)>r\) continues to \(u=u_0\).
  Indeed, after pairing with the cusp form \(f^\dagger\),
  rapid decay gives absolute convergence at \(u_0\),
  and the unfolded integral represents the same continuation.
  At this point, using the Iwasawa decomposition on the
  \(\delta\)-component and the definition
  \eqref{eq:general-r-theta-definition}, the right-hand side becomes
  \begin{align}
     & \bigl(
    \Phi_{\gamma,T},
    f^\dagger
    \bigr)_{G_r}
    \notag          \\
     & =
    \sum_{\delta\in\calG^{(r)}(\frakN)}
    \frac{\vol(K_r(\frakN))}{e_\delta}
    \int_{\calF_\delta}
    \int_{\fraku_\delta\backslash(\Her_r(\bbC))^\bfa}
    \sum_{X\in\M_{r,n}(E)}
    \Biggl(
    \sum_{\mathbf i}
    \varepsilon_{\delta,\mathbf i}^{\bfh}
      (k_\bfh,u_0)
    \bigl(
      \omega_{T,\bfh}(k_\bfh)
      \phi_{\delta,\mathbf i}
      \bigr)(X)
    \Biggr)
    \notag          \\
     & \qquad\times
    e^{2\pi\sqrt{-1}\tr_\infty(T[X^*]V)}
    e^{-2\pi\tr_\infty(T[X^*]Y)}
    \notag          \\
     & \qquad\times
    \left\langle
    \rho_r'
    \bigl((Y_v,{}^tY_v)_{v\in\bfa}\bigr)
    P_{T,X},
    f_\delta^\dagger(V+\sqrt{-1}Y)
    \right\rangle_{\sigma_r'}
    \,dV\,d\mu_r(Y).
    \label{eq:primitive-Phi-Iwasawa}
  \end{align}
  By \eqref{eq:dual-basis-identity},
  \[
    \sum_{\mathbf i}
    \varepsilon_{\delta,\mathbf i}^{\bfh}
    (k_\bfh,u_0)
    \bigl(
    \omega_{T,\bfh}(k_\bfh)
    \phi_{\delta,\mathbf i}
    \bigr)(X)
    =
    \mathbbm1_{\calX_{\gamma,\delta}(T)}(X).
  \]
  Hence
  \begin{align}
     & \bigl(
    \Phi_{\gamma,T},
    f^\dagger
    \bigr)_{G_r}
    \notag          \\
     & =
    \sum_{\delta\in\calG^{(r)}(\frakN)}
    \frac{\vol(K_r(\frakN))}{e_\delta}
    \sum_{X\in\calX_{\gamma,\delta}(T)}
    \int_{\calF_\delta}
    \int_{\fraku_\delta\backslash(\Her_r(\bbC))^\bfa}
    e^{2\pi\sqrt{-1}\tr_\infty(T[X^*]V)}
    e^{-2\pi\tr_\infty(T[X^*]Y)}
    \notag          \\
     & \qquad\times
    \left\langle
    \rho_r'
    \bigl((Y_v,{}^tY_v)_{v\in\bfa}\bigr)
    P_{T,X},
    f_\delta^\dagger(V+\sqrt{-1}Y)
    \right\rangle_{\sigma_r'}
    \,dV\,d\mu_r(Y) \\
     & =
    \sum_{\delta\in\calG^{(r)}(\frakN)}
    \frac{\vol(K_r(\frakN))}{e_\delta}
    \sum_{X\in\calX_{\gamma,\delta}(T)}
    \sum_{Q'>0}
    \int_{\calF_\delta}
    \int_{\fraku_\delta\backslash(\Her_r(\bbC))^\bfa}
    e^{2\pi\sqrt{-1}
        \tr_\infty((T[X^*]-Q')V)}
    \notag          \\
     & \qquad\times
    e^{-2\pi\tr_\infty((T[X^*]+Q')Y)}
    \left(
    1\otimes
    \left\langle
    \rho_r'
    \bigl((Y_v,{}^tY_v)_{v\in\bfa}\bigr)\,\cdot,
    A_{f^\dagger}(\delta,Q')
    \right\rangle_{\sigma_r'}
    \right)
    P_{T,X}
    \,dV\,d\mu_r(Y).
    \label{eq:primitive-Phi-Fourier-substitution}
  \end{align}
  Fourier orthogonality in \(V\) gives
  \[
    Q'=T[X^*].
  \]
  Since \(Q'>0\) and \(T>0\), this is possible exactly when \(\rank_E(X)=r\).
  Moreover,
  \[
    \int_{\fraku_\delta\backslash(\Her_r(\bbC))^\bfa}
    1\,dV
    =
    \covol_{dV}(\fraku_\delta).
  \]
  Therefore,
  \begin{align}
    \bigl(
    \Phi_{\gamma,T},
    f^\dagger
    \bigr)_{G_r}
     & =
    \sum_{\delta\in\calG^{(r)}(\frakN)}
    c_\delta
    \sum_{\substack{
    X\in\calX_{\gamma,\delta}(T) \\
        \rank_E(X)=r
      }}
    \int_{\calF_\delta}
    e^{-4\pi\tr_\infty(T[X^*]Y)}
    \notag                       \\
     & \qquad\times
    \left(
    1\otimes
    \left\langle
    \rho_r'
    \bigl((Y_v,{}^tY_v)_{v\in\bfa}\bigr)\,\cdot,
    A_{f^\dagger}
    \bigl(\delta,T[X^*]\bigr)
    \right\rangle_{\sigma_r'}
    \right)
    P_{T,X}\,
    d\mu_r(Y).
    \label{eq:primitive-Phi-pairing}
  \end{align}

  Comparing \eqref{eq:primitive-Phi-pairing} with
  \eqref{eq:primitive-after-mobius}, we obtain
  \begin{equation}\label{eq:primitive-before-normalization}
    C_{n+r,\kappa,\nu}^{(\frakN)}(f;\chi)
    A_{\Kl}^{\prim}(\gamma,T)
    =
    A_{\Si}^{\prim}(\gamma,T)
    \bigl(
    \Phi_{\gamma,T},
    f^\dagger
    \bigr)_{G_r}.
  \end{equation}

  Finally,
  division by \((f^\dagger,f^\dagger)_{G_r}\) gives
  \[
    c_fA_{\Kl}^{\prim}(\gamma,T)
    =
    A_{\Si}^{\prim}(\gamma,T)
    \frac{
      (\Phi_{\gamma,T},f^\dagger)_{G_r}
    }{
      (f^\dagger,f^\dagger)_{G_r}
    }.
  \]
\end{proof}

\section{Arithmetic applications}\label{sec:arithmetic-applications}

\subsection{Integrality}\label{subsec:arithmetic-integrality}

Fix a number field \(L\subset\bbC\),
stable under complex conjugation,
which contains the coefficient fields of the forms and characters under consideration and the values of \(\psi_{F,\bfh}\) occurring below.
We assume that the orthogonal projection onto the cuspidal subspace with respect to the Petersson inner product is defined over \(L\).
We also assume that the linear functional
\[
  h\longmapsto
  \frac{(h,f^\dagger)_{G_r}}
  {(f^\dagger,f^\dagger)_{G_r}}
\]
is defined over \(L\).
Let \(\frakP\) be a prime of \(L\) above the rational prime \(p\),
and write \(V_{\rho_n,\calO_{L,\frakP}}\) and \(V_{\rho_r',\calO_{L,\frakP}}\) for the \(\calO_{L,\frakP}\)-spans of the standard bideterminant bases in the corresponding representation spaces.
Let \(\calA_n(L)\subset\calA_n(\rho_n,\frakN)\) and \(\calA_r(L)\subset\calA_r(\rho_r',\frakN)\) be the corresponding \(L\)-structures,
and let \(\calA_n(\calO_{L,\frakP})\) and \(\calA_r(\calO_{L,\frakP})\) be the submodules of forms whose Fourier coefficients at the chosen cusps belong to these integral lattices.
Put
\[
  \calS_r(L)=\calA_r(L)\cap\calS_r(\rho_r',\frakN),
  \qquad
  \calS_r(\calO_{L,\frakP})
  =
  \calA_r(\calO_{L,\frakP})\cap\calS_r(\rho_r',\frakN).
\]
Let \(\pr:\calA_r(L)\rightarrow\calS_r(L)\) be the projection characterized by
\[
  (\pr(h),s)_{G_r}=(h,s)_{G_r}
  \qquad
  (h\in\calA_r(L),\ s\in\calS_r(L)).
\]
Define ideals of \(\calO_{L,\frakP}\) by
\[
  \frakd_{\cusp}(r)
  =
  \left\{
  a\in\calO_{L,\frakP}
  \;\middle|\;
  a\,\pr(\calA_r(\calO_{L,\frakP}))
  \subset\calS_r(\calO_{L,\frakP})
  \right\},
\]
\[
  \frakA(f^\dagger)
  =
  \left\{
  a\in\calO_{L,\frakP}
  \;\middle|\;
  a\,
  \frac{(h,f^\dagger)_{G_r}}
  {(f^\dagger,f^\dagger)_{G_r}}
  \in\calO_{L,\frakP}
  \text{ for every }h\in\calS_r(\calO_{L,\frakP})
  \right\}.
\]
By the preceding rationality assumption and finite dimensionality,
both are nonzero ideals of \(\calO_{L,\frakP}\).
Put
\[
  D_{F,E}(\frakN)
  =
  N_{F/\bbQ}(\frakd_F\frakN\frakD_{E/F}),
  \qquad
  d_{\max}
  =
  \max_{v\in\bfa}(|\bfk_v|+|\bfl_v|).
\]

\begin{prop}\label{prop:integral-pullback-normalization}
  Assume \(p\nmid D_{F,E}(\frakN)\),
  \(p>2\mu+d_{\max}-4\),
  and \(p>\mu+1\).
  Then for every cusp representative \(\gamma\) and every Fourier index \(S\),
  \begin{equation}\label{eq:normalized-Klingen-integrality}
    c_fA_{\Kl}(\gamma,S)
    \in
    (\frakd_{\cusp}(r)\frakA(f^\dagger))^{-1}
    V_{\rho_n,\calO_{L,\frakP}}.
  \end{equation}
  Suppose in addition that \((\gamma,T)\) satisfies
  \(M_v=L_{\gamma,\frakN,v}^{(n)}\) for every \(M\in\calP_\gamma(T)\) and every \(v\mid p\),
  and that
  \[
    \frakd_{\cusp}(r)
    =
    \frakA(f^\dagger)
    =
    \calO_{L,\frakP}.
  \]
  Then
  \begin{equation}\label{eq:normalized-primitive-Klingen-integrality}
    c_fA_{\Kl}^{\prim}(\gamma,T)
    \in V_{\rho_n,\calO_{L,\frakP}}.
  \end{equation}
\end{prop}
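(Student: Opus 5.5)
The plan is to read both statements off the global pullback formula, Theorem~\ref{thm:global-pullback}. Fix the cusp \(\gamma\) and the index \(S\), and take the \(S\)-th Fourier coefficient in the \(g\)-variable on both sides of
\[
  (\calE_{\frakN}(g,*),f^\dagger)_{G_r}
  =C_{n+r,\kappa,\nu}^{(\frakN)}(f;\chi)[f]^n_{\kappa,\nu,\frakN}(g;\chi).
\]
On the left we expand the kernel in \(g\), namely \(\calE_{\frakN}(\iota_{n,r}(g,h))=\sum_S e^{2\pi\sqrt{-1}\tr_\infty(SZ)}\,\calE_S(h)\). Here \(\calE_S\) is a \(V_{\sigma_n}\otimes\calA_r(\rho_r',\frakN)\)-valued holomorphic form on \(G_r\). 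Its Fourier coefficients at \(\delta\) are finite sums
\[
  \sum_{B} P_{\mu;\bfk,\bfl}^{(n,r)}\!\begin{pmatrix}S&B\\B^*&Q\end{pmatrix}
  A_{\widetilde E^{[n],\theta}_{n+r,\kappa,\nu,\frakN}}\!\left(\xi_r\iota_{n,r}(\gamma,\delta)\xi_r^{-1},\begin{pmatrix}S&B\\B^*&Q\end{pmatrix}\right).
\]
This gives \(C\,A_{\Kl}(\gamma,S)=(\calE_S,f^\dagger)_{G_r}\), and after dividing by \((f^\dagger,f^\dagger)_{G_r}\) we get \(c_fA_{\Kl}(\gamma,S)=(\calE_S,f^\dagger)/(f^\dagger,f^\dagger)\).

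\textbf{Step 1: integrality of \(\calE_S\).} We show \(\calE_S\in V_{\rho_n,\calO_{L,\frakP}}\otimes\calA_r(\calO_{L,\frakP})\). This needs two inputs.

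\begin{itemize}
  \item The normalized Siegel coefficients of \(\bbL^{(\frakN)}(n,\mu;\chi)E_{n+r}\) must be \(\frakP\)-integral at every cusp, including singular indices. We use Remark~\ref{rem:siegel-fourier-coefficients}: the local factors are polynomials in \(q_v^{-s}\) with integral coefficients, and they are evaluated at places prime to \(p\) because \(p\nmid D_{F,E}(\frakN)\). The archimedean confluent-hypergeometric factor at \(s=\mu/2\) contributes \(\pi\)-powers and factorials of size at most \(\mu\), and these are units since \(p>\mu+1\). For singular \(R\) of rank \(n\) we use the singular-coefficient reduction already invoked in \eqref{eq:primitive-singular-coefficient}. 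There the degree-\(n\) normalization \(\bbL^{(\frakN)}(n,\mu;\chi)\) is exactly the factor needed to cancel the denominators coming from the Siegel series.
  \item The polynomial \(P_{\mu;\bfk,\bfl}^{(n,r)}\) must have coefficients in \(\calO_{L,\frakP}\) with respect to the bideterminant bases. The normalized polynomial of \cite[Theorem~5.13]{Takeda2025DifferentialOperators} has denominators built from hook-type products. These are bounded by \(2\mu+d_{\max}-4\), which explains the hypothesis \(p>2\mu+d_{\max}-4\).
\end{itemize}

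\textbf{Step 2: the pairing.} Since \(f^\dagger\) is cuspidal, \((\calE_S,f^\dagger)=(\pr\calE_S,f^\dagger)\). By the definition of \(\frakd_{\cusp}(r)\), \(a\,\pr\calE_S\in\calS_r(\calO_{L,\frakP})\) for every \(a\in\frakd_{\cusp}(r)\). Then \(\frakA(f^\dagger)\) clears the denominator of the normalized functional. Together these give \eqref{eq:normalized-Klingen-integrality}.

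\textbf{Step 3: primitive coefficients.} By Definition~\ref{def:primitive-coefficient}, \(c_fA_{\Kl}^{\prim}(\gamma,T)\) is a finite \(\bbZ\)-combination of the vectors \(\rho_{T_M}[a_M,Z_M]\,c_fA_{\Kl}(\gamma[M],T_M)\). By the first part, with unit ideals, each \(c_fA_{\Kl}(\gamma[M],T_M)\) is integral. It remains to see that \(\rho[a_M]\) preserves \(V_{\rho_n,\calO_{L,\frakP}}\) and that the character values are units.

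\begin{itemize}
  \item The hypothesis \(M_v=L_v\) for \(v\mid p\) lets us choose \(h_{M,v}=I_n\) at \(v\mid p\). Then \(a_M\) lies in \(\GL_n(\calO_{E,v})\) at all \(v\mid p\), so its archimedean image acts integrally on the bideterminant lattice. The \(\det\)-powers in \(\tau_{\kappa,\nu}\) are \(\frakP\)-units.
  \item The values \(\psi_{F,\bfh}\) are roots of unity in \(L\), hence units.
\end{itemize}

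This gives \eqref{eq:normalized-primitive-Klingen-integrality}.

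The main obstacle is Step 1. We must check uniformly in \(S\), including the singular indices \(R\) and the twisted cusps \(\xi_r\iota_{n,r}(\gamma,\delta)\xi_r^{-1}\), that \(\bbL^{(\frakN)}(n,\mu;\chi)\) suffices to make all Siegel coefficients \(\frakP\)-integral. The naive normalization would use \(\bbL^{(\frakN)}(n+r,\mu;\chi)\). The first thing to try is to reduce, through the rank of \(R\) and Shimura's singular formula, to nondegenerate coefficients in degree \(\mathrm{rank}(R)\ge n\). The extra \(L\)-factors for \(a\ge n\) would then have to be shown to be \(\frakP\)-units or absent, using \(p>\mu+1\) and von Staudt-type control of the corresponding \(L\)-values.
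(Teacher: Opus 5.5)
Your outline matches the paper's: take the \(S\)-th coefficient of Theorem~\ref{thm:global-pullback}, show \(\calE_S\) is integral, pass through \(\pr\), \(\frakd_{\cusp}(r)\) and \(\frakA(f^\dagger)\), and finish with the M\"obius sum, using that \(\rho_n[a_M]\) preserves \(V_{\rho_n,\calO_{L,\frakP}}\) and that the \(\psi_{F,\bfh}\)-values are roots of unity. Steps~2 and~3 agree with the paper. The gap is Step~1. You leave it open, and the route you suggest for closing it cannot work.

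For an index \(R\) of rank \(t\), the coefficient of \(\bbL^{(\frakN)}(n,\mu;\chi)E_{n+r,\kappa,\nu,\frakN}\) equals \(\bbL^{(\frakN)}(n,\mu;\chi)/\bbL^{(\frakN)}(t,\mu;\chi)\) times \(\frakP\)-integral local data. This holds once the archimedean \(\pi\)-powers and factorials are absorbed into \(L(1-\mu+a,\cdot)\) by the functional equation; they are not units on their own.

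\begin{itemize}
\item For \(t\le n\) the ratio is \(\prod_{a=t}^{n-1}L^{(\frakN)}(1-\mu+a,\chi_0^{-1}\chi_{E/F}^a)\). It is integral by Deligne--Ribet, since \(1\le\mu-a<p-1\). This is where the paper uses \(p>\mu+1\), not for archimedean factorials. You need this step because singular \(S\) are allowed.
\item For \(t>n\) you are left dividing by \(\prod_{a=n}^{t-1}L^{(\frakN)}(1-\mu+a,\chi_0^{-1}\chi_{E/F}^a)\). Such indices are not absent from \(\calE_S\): any \(B\) with \(Q-B^*S^{-1}B>0\) gives rank \(n+r\).
\item Von Staudt and Deligne--Ribet bound the denominators of these values, not their numerators, so nothing makes them \(\frakP\)-units. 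For example, take \(n=2\), \(r=1\), \(\chi=1\), \(\mu=14\). The factor is \(\zeta(-11)=691/32760\), and \(p=691\) satisfies \(p>\mu+1\) and, for \(d_{\max}<667\), \(p>2\mu+d_{\max}-4\).
\end{itemize}

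The paper's proof does not fill this step either. It shows \(\bbL^{(\frakN)}(n,\mu;\chi)\in\calO_{L,\frakP}\) and invokes integrality of the local Siegel series. It also invokes integrality of \((2\pi\sqrt{-1})^{-d_{\bfk,\bfl}}\bbD_{\bfk,\bfl}\), which is where \(p>2\mu+d_{\max}-4\) enters, as you guessed. It then asserts integrality of all coefficients of the differentiated kernel without separating by the rank of \(R\).

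The leftover factor \(\prod_{a=n}^{n+r-1}L^{(\frakN)}(1-\mu+a,\chi_0^{-1}\chi_{E/F}^a)\) is exactly the arithmetic normalizer of the degree-\(r\) Eisenstein series \(E^{[n]}_{\delta,\mathbf i}\) inside \(\Phi_{\gamma,T}\). That series has weight \(\mu-n\) and character \(\chi\chi_T\), with \(\chi_T|_{\bbA_F^\times}=\chi_{E/F}^n\). In degree two the factor is \(\zeta(3-2m)\), which the paper assumes to be a unit from Lemma~\ref{lem:degree-two-rank-criterion} on.

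So Step~1 can be completed only in one of two ways: add the hypothesis that this product is a \(\frakP\)-unit, or move it to the left-hand side (equivalently, normalize the kernel by \(\bbL^{(\frakN)}(n+r,\mu;\chi)\)). Under the stated hypotheses, neither your argument nor the paper's controls it.

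A smaller correction: places \(v\mid p\) are not avoided. The unramified Siegel series there genuinely occurs, and its integrality comes from the structure of the Siegel series (its functional equation), as in the paper's citations of Shimura and Ikeda. The hypothesis \(p\nmid D_{F,E}(\frakN)\) only ensures that the places with non-spherical data are prime to \(p\).
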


\begin{proof}
  For
  \(
  \psi_a^\chi=\chi_0^{-1}\chi_{E/F}^{a}
  \)
  with \(0\le a<n\),
  the assumptions give
  \(1\le\mu-a<p-1\).
  Since \(p\nmid D_{F,E}(\frakN)\),
  the characters \(\psi_a^\chi\) are unramified at the places above \(p\).
  The integrality theorem for critical values in \cite[Theorem~0.4]{DeligneRibet1980Values} therefore gives
  \[
    L^{(\frakN)}(1-\mu+a,\psi_a^\chi)
    \in\calO_{L,\frakP}.
  \]
  Indeed, the factor occurring in the Deligne--Ribet integrality theorem may be chosen to be a \(\frakP\)-adic unit since \(0<\mu-a<p-1\).
  Hence
  \[
    \bbL^{(\frakN)}(n,\mu;\chi)
    \in\calO_{L,\frakP}.
  \]
  By the local formulas for Hermitian Siegel series \cite[\S\S13--16]{Shimura1997EulerProducts} (cf. \cite[\S2]{Ikeda2008Lifting}),
  the local Fourier coefficients of \(\widetilde E_{n+r,\kappa,\nu,\frakN}^{[n],\theta}\) at every finite place \(v\) are integral at \(\frakP\).
  Moreover,
  \cite[(4.8), Theorem~5.11, Example~5.12]{Takeda2025DifferentialOperators} shows,
  under the assumed bound on \(p\),
  that \((2\pi\sqrt{-1})^{-d_{\bfk,\bfl}}\bbD_{\bfk,\bfl}\) maps integral Fourier expansions to integral Fourier expansions.
  Thus the Fourier coefficients of the differentiated Eisenstein series occurring in the pullback formula are \(\frakP\)-integral.

  Taking the Petersson pairing of \(\calE_{\frakN}(g,\cdot)\) with \(f^\dagger\) in the \(G_r\)-variable and using the definitions of \(\frakd_{\cusp}(r)\) and \(\frakA(f^\dagger)\) gives \eqref{eq:normalized-Klingen-integrality}.

  Assume now the additional hypotheses in the second part of the proposition.
  For \(M\in\calP_\gamma(T)\),
  \eqref{eq:fourier-coefficient-transformation} gives
  \[
    A_{\Kl}(t_{n,0}(h_M)\gamma,T)
    =
    (\rho_n)_{T_M}[a_M,Z_M]
    A_{\Kl}(\gamma[M],T_M).
  \]
  Since \(M_v=L_{\gamma,\frakN,v}^{(n)}\) for every \(v\mid p\),
  the operator \((\rho_n)_{T_M}[a_M,Z_M]\) preserves
  \(V_{\rho_n,\calO_{L,\frakP}}\):
  the matrix \(\rho_n[a_M]\) preserves this lattice and
  \(\psi_{F,\bfh}(\tr(T_MZ_M))\) is a root of unity, hence a \(\frakP\)-adic unit.
  As \(\frakd_{\cusp}(r)=\frakA(f^\dagger)=\calO_{L,\frakP}\),
  equation \eqref{eq:normalized-Klingen-integrality} therefore implies
  \[
    c_f
    A_{\Kl}(t_{n,0}(h_M)\gamma,T)
    \in
    V_{\rho_n,\calO_{L,\frakP}}
  \]
  for every \(M\in\calP_\gamma(T)\).
  Taking the finite M\"obius sum in
  Definition~\ref{def:primitive-coefficient} gives
  \[
    c_fA_{\Kl}^{\prim}(\gamma,T)
    \in
    V_{\rho_n,\calO_{L,\frakP}},
  \]
  which is \eqref{eq:normalized-primitive-Klingen-integrality}.
\end{proof}

\subsection{The degree-two case}\label{subsec:degree-two-arithmetic}

Assume
\(F=\bbQ\),
\(n=2\),
\(r=1\),
\(\frakN=(1)\),
and \(\chi=1\),
and write \(D_E=|\operatorname{disc}(E)|\).
Let \(m\ge3\) and \(w>m\),
and put \(\kappa=\nu=m\), \(\bfk=\bfl=(w-m)\), \(\mu=2m\).
Let \(f_0(q)=\sum_{n\ge1}a(n;f_0)q^n\in S_{2w}(\SL_2(\bbZ))\) be a Hecke eigenform normalized by \(a(1;f_0)=1\),
and let \(f\) be the corresponding full-level degree-one Hermitian cusp form.

\begin{prop}\label{prop:degree-two-integral-pullback}
  Assume that \(p>2w\) and \(p\nmid D_E\).
  For every cusp representative \(\gamma\) and every Fourier index \(S\),
  \[
    c_fA_{\Kl}(\gamma,S)
    \in
    (\frakd_{\cusp}(1)\frakA(f^\dagger))^{-1}
    V_{\rho_2,\calO_{L,\frakP}}.
  \]
\end{prop}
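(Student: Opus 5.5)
The plan is to deduce this from the first part of Proposition~\ref{prop:integral-pullback-normalization}, by checking that its hypotheses hold in the degree-two setting. Here the hypotheses of Proposition~\ref{prop:integral-pullback-normalization} read as follows. First, \(p\nmid D_{F,E}(\frakN)\). Second, \(p>2\mu+d_{\max}-4\). Third, \(p>\mu+1\). In our case \(F=\bbQ\) and \(\frakN=(1)\), so \(D_{F,E}(\frakN)=N(\frakD_{E/\bbQ})=D_E\), and the first condition is exactly \(p\nmid D_E\).

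With \(\mu=2m\) and \(\bfk=\bfl=(w-m)\) we have \(d_{\max}=2(w-m)\), hence
\[
  2\mu+d_{\max}-4=4m+2w-2m-4=2w+2m-4 .
\]
This is \emph{not} bounded by \(2w\) once \(m\ge3\). So a direct citation fails, and this is the main obstacle. The degree-two proof must therefore sharpen the one step where the bound on \(p\) enters, while the remaining steps carry over unchanged.

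For the Hecke \(L\)-values I would argue as follows. The factor \(\bbL^{(1)}(2,2m;1)\) involves \(\zeta(1-2m)\) and \(L(2-2m,\chi_{E/\bbQ})\). By Kummer/von Staudt type congruences (Deligne--Ribet), these are \(\frakP\)-integral as soon as \(p-1\nmid 2m\) and \(p-1\nmid 2m-1\), respectively. Both conditions hold because \(p>2w>2m+1\). The local Siegel series coefficients are integral exactly as in Proposition~\ref{prop:integral-pullback-normalization}.

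The crux is the differential operator. Here \(n=2\), \(r=1\), and \(P^{(2,1)}_{2m;(w-m),(w-m)}\) is an explicit Gegenbauer-type polynomial in the entries of \(R\). I would write it out explicitly, for instance via \cite[Example~5.12]{Takeda2025DifferentialOperators}, in the form \(\sum_j c_j\,(\text{monomials})\). Its coefficients are ratios of factorials of size at most about \(k+l+\mu-2\) with \(k=l=w-m\), that is, of size at most about \(2w-2\). I would then check directly that every denominator is a product of integers \(<2w\). Under \(p>2w\) these are \(\frakP\)-units, so \((2\pi\sqrt{-1})^{-d_{\bfk,\bfl}}\bbD_{\bfk,\bfl}\) preserves integral Fourier expansions. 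The bound \(2\mu+d_{\max}-4\) of the general case is presumably a crude uniform estimate that can be improved in this explicit case. If that sharper check fails, I would fall back on verifying the denominators of the specific polynomial coefficients by hand.

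Finally, the Petersson-pairing step is identical to the general proof. The pullback kernel \(\calE_{(1)}(g,\cdot)\) has \(\frakP\)-integral Fourier expansion in the \(G_1\)-variable. Pairing with \(f^\dagger\) and using \(\frakd_{\cusp}(1)\) and \(\frakA(f^\dagger)\) then yields the stated inclusion via Theorem~\ref{thm:global-pullback}. Theorem~\ref{thm:global-pullback} applies because \(\mu=2m\ge6>3=n+r\), \(\mu\ne n+r+1\), and \(\mu\neq n+r\).
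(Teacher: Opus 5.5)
Your outline follows the same route as the paper. The Hecke \(L\)-values and the local Siegel series are treated as in Proposition~\ref{prop:integral-pullback-normalization}. The only new input is a sharper integrality statement for the differential polynomial when \((n,r)=(2,1)\) and \(\bfk=\bfl=(d)\), \(d=w-m\). The cuspidal-projection and Petersson-pairing step is unchanged. You also correctly see why a direct citation fails, since \(2\mu+d_{\max}-4=2w+2m-4>2w\).

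\textbf{The gap.} At the one step that needs proof, you only say you "would check" the denominators, with a fallback if the check fails. As written, the crux is not established.

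Your heuristic, that the coefficients are ratios of factorials of size at most \(k+l+\mu-2=2w-2\), also overlooks a second source of denominators. The polynomial entering \(\bbD_{\bfk,\bfl}\) is the \emph{normalized} one of \cite[Theorem~5.13]{Takeda2025DifferentialOperators}. The paper obtains it in two explicit steps. First, specializing \cite[Example~4.15]{Takeda2025DifferentialOperators} to \(a_1=a_2=d\) and \(\kappa=2m\) gives
\[
P^{M}_{\nu_d,2m}(S)=\sum_{i=0}^{d}(-1)^i\frac{\binom di^2}{\binom{2w-2}{i}}\,s_{11}^{i}s_{12}^{d-i}s_{21}^{d-i}s_{22}^{i}.
\]
Its denominators involve only primes \(\le 2w-2\). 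Second, \cite[Proposition~5.10]{Takeda2025DifferentialOperators} gives the normalizing constant
\[
c_{m,w}=(2m-1)(2w-1)\Bigl(\prod_{r=0}^{d-2}(2m+r)\Bigr)^2,
\]
so that \(P_0=c_{m,w}^{-1}P^{M}_{\nu_d,2m}\). Because \(c_{m,w}\) is inverted, its prime factors become denominators of the normalized operator. It contains the factor \(2w-1\), which lies beyond your "\(2w-2\)" estimate. All of its factors are still \(<2w\), and this is exactly why \(p>2w\) is the right hypothesis.

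To complete your argument you need these two explicit facts, or an equivalent direct computation of the normalized coefficients. With them, the remaining steps of your proposal go through as you describe.

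\textbf{A minor point.} For \(L(2-2m,\chi_{E/\bbQ})\), the condition \(p-1\nmid 2m-1\) is vacuous for odd \(p\). The relevant input is that \(\chi_{E/\bbQ}\) has conductor prime to \(p\), i.e.\ \(p\nmid D_E\), together with \(0<\mu-a<p-1\) in \cite{DeligneRibet1980Values}. The conclusion is unaffected.
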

\begin{proof}
  Put \(d=w-m\) and \(\nu_d=\begin{pmatrix}0&d\\d&0\end{pmatrix}\).
  Under the stated assumptions,
  the critical Hecke \(L\)-values and the local factors arising from the
  Siegel series at finite places are \(\frakP\)-integral,
  by the same argument as in the proof of
  Proposition~\ref{prop:integral-pullback-normalization}.
  It remains to consider the differential operator.

  Specializing \cite[Example~4.15]{Takeda2025DifferentialOperators}
  to \(a_1=a_2=d\) and \(\kappa=2m\), we obtain
  \[
    P_{\nu_d,2m}^{M}(S)
    =
    \sum_{i=0}^{d}
    (-1)^i
    \frac{\binom di^2}{\binom{2w-2}{i}}
    s_{11}^{i}s_{12}^{d-i}s_{21}^{d-i}s_{22}^{i},
  \]
  where \(S=(s_{ij})\).

  Applying \cite[Proposition~5.10]{Takeda2025DifferentialOperators}
  to \(P_{\nu_d,2m}^{M}\), we have
  \[
    \phi_{2m}\!\left(P_{\nu_d,2m}^{M}\right)(T)
    =
    c_{m,w}\,t_{12}^{d}t_{21}^{d},
  \]
  where evaluation at
  \(T=\left(\begin{smallmatrix}0&1\\1&0\end{smallmatrix}\right)\)
  gives
  \[
    c_{m,w}
    =
    (2m-1)(2w-1)
    \left(
    \prod_{r=0}^{d-2}(2m+r)
    \right)^2,
  \]
  with the product interpreted as \(1\) when \(d=1\).
  Hence the polynomial \(P_0\) in \cite[Theorem~5.13]{Takeda2025DifferentialOperators} is \(P_0=c_{m,w}^{-1}P_{\nu_d,2m}^{M}\).

  Every prime dividing a denominator of a coefficient of \(P_{\nu_d,2m}^{M}\) is at most \(2w-2\).
  Moreover, every prime dividing \(c_{m,w}\) is smaller than \(2w\).
  Since \(p>2w\),
  both \(P_{\nu_d,2m}^{M}\) and \(c_{m,w}^{-1}\) are \(\frakP\)-integral.
  Thus the normalized differential operator preserves the integral coefficient lattice.
  The assertion follows from the same cuspidal-projection and normalized Petersson-pairing argument as in Proposition~\ref{prop:integral-pullback-normalization}.
\end{proof}

Specializing \eqref{eq:C-explicit-formula} and applying the functional equations gives
\begin{equation}\label{eq:degree-two-cf-standard-L-relation}
  \frac{c_f}
  {L^{\mathrm{alg}}(m-\tfrac12,f,\St)}
  \in
  \calO_{L,\frakP}^{\times}
\end{equation}
when \(p>2w\) and \(p\nmid D_E\),
where
\[
  L^{\mathrm{alg}}\!\left(m-\tfrac12,f,\St\right)
  :=
  \frac{
  L(m-\tfrac12,f,\St)
  }{
  D_E^{1/2}\,
  \pi^{2w+2m-2}\,
  (f,f)_{\U_{1,1}}
  }
\]
is the algebraically normalized critical value of the standard \(L\)-function,
as in \cite[Corollary~5.14]{Takeda2026Congruences}.

For
\(x\in V_{\rho_2,\calO_{L,\frakP}}\otimes_{\calO_{L,\frakP}}L\),
put
\[
  v_{\frakP}(x)
  =
  \max\left\{
  a\in\bbZ
  \;\middle|\;
  x\in(\frakP\calO_{L,\frakP})^a
  V_{\rho_2,\calO_{L,\frakP}}
  \right\},
  \qquad
  v_{\frakP}(0)=+\infty.
\]

\begin{cor}\label{cor:degree-two-primitive-nonvanishing}
  Assume that \(p>2w\), \(p\nmid D_E\), and \(\frakd_{\cusp}(1)=\frakA(f^\dagger)=\calO_{L,\frakP}\).
  Let \((\gamma,T)\) satisfy \(M_v=L_{\gamma,\frakN,v}^{(2)}\) for every \(M\in\calP_\gamma(T)\) and \(v\mid p\).
  Then
  \[
    v_{\frakP}\!\left(
    A_{\Si}^{\prim}(\gamma,T)
    \frac{(\Phi_{\gamma,T},f^\dagger)_{G_1}}
    {(f^\dagger,f^\dagger)_{G_1}}
    \right)
    \ge0.
  \]
  If equality holds,
  then there is an \(M\in\calP_\gamma(T)\) such that
  \begin{equation}\label{eq:degree-two-klingen-nonvanishing}
    v_{\frakP}\!\left(
    L^{\mathrm{alg}}(m-\tfrac12,f,\St)
    A_{\Kl}(\gamma[M],T_M)
    \right)
    =0.
  \end{equation}
\end{cor}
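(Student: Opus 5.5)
The plan is to combine Theorem~\ref{thm:main-petersson-formula} with the integrality statement of Proposition~\ref{prop:degree-two-integral-pullback}, upgraded to primitive coefficients exactly as in the second half of Proposition~\ref{prop:integral-pullback-normalization}. The hypotheses of Theorem~\ref{thm:main-petersson-formula} hold here. We have \(\mu=2m\ge6>n+r=3\), so none of the exceptional cases occurs. Hence
\[
  c_fA_{\Kl}^{\prim}(\gamma,T)
  =
  A_{\Si}^{\prim}(\gamma,T)
  \frac{(\Phi_{\gamma,T},f^\dagger)_{G_1}}{(f^\dagger,f^\dagger)_{G_1}},
\]
and both assertions reduce to statements about the left-hand side.

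For the inequality, I would take the bound of Proposition~\ref{prop:degree-two-integral-pullback} with \(\frakd_{\cusp}(1)=\frakA(f^\dagger)=\calO_{L,\frakP}\). This gives \(c_fA_{\Kl}(\gamma',S)\in V_{\rho_2,\calO_{L,\frakP}}\) for every cusp \(\gamma'\) and every index \(S\). Next, use Definition~\ref{def:primitive-coefficient} to write
\[
  c_fA_{\Kl}^{\prim}(\gamma,T)=\sum_{M\in\calP_\gamma(T)}\mu_{\calP_\gamma(T)}(L,M)(\rho_2)_{T_M}[a_M,Z_M]\,c_fA_{\Kl}(\gamma[M],T_M).
\]
The Möbius values are integers. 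The operators \((\rho_2)_{T_M}[a_M,Z_M]\) preserve \(V_{\rho_2,\calO_{L,\frakP}}\), by the same argument as in Proposition~\ref{prop:integral-pullback-normalization}. That argument rests on two points. First, \(M_v=L_v\) for \(v\mid p\), so \(a_M\) is a \(p\)-adic unit at the relevant places and \(\rho_2[a_M]\) is \(\frakP\)-integral. Second, the character value is a root of unity. The left-hand side is therefore integral, so \(v_\frakP\ge0\).

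For the equality case, suppose the valuation is \(0\). Then \(c_fA_{\Kl}^{\prim}(\gamma,T)\not\equiv0\) modulo \(\frakP V_{\rho_2,\calO_{L,\frakP}}\). Since this quantity is a finite \(\bbZ\)-linear combination of the terms \((\rho_2)_{T_M}[a_M,Z_M]c_fA_{\Kl}(\gamma[M],T_M)\), and each term is integral, at least one term must be nonzero modulo \(\frakP\). For that \(M\), the step that needs care is to show that the operator \((\rho_2)_{T_M}[a_M,Z_M]\) is an automorphism of the lattice, not merely an endomorphism. I would argue via \(\det a_M\). At \(v\mid p\) we have \(a_ML_v=L_{\gamma[M],v}\), so \(a_M\) lies in \(\GL_2(\calO_{E,v})\) up to the fixed cusp data. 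Then \(\rho_2[a_M]\) has inverse \(\rho_2[a_M^{-1}]\), which is also integral. It follows that \(v_\frakP(c_fA_{\Kl}(\gamma[M],T_M))=0\).

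Finally, \eqref{eq:degree-two-cf-standard-L-relation} says \(c_f\) and \(L^{\mathrm{alg}}(m-\tfrac12,f,\St)\) differ by a \(\frakP\)-adic unit. Replacing \(c_f\) by \(L^{\mathrm{alg}}\) therefore gives \eqref{eq:degree-two-klingen-nonvanishing}. I expect the main obstacle to be the integrality and invertibility of \(\rho_2[a_M]\) at the places above \(p\). This depends on the choice of cusp representatives \(c_\gamma\) being compatible at \(p\), namely \(L_{\gamma,v}\) and \(L_{\gamma[M],v}\) both corresponding to standard lattices. If needed, I would adjust \(h_M\) by a \(p\)-integral unit, using Lemma~\ref{lem:fourier-coefficient-transformation}'s independence of choices.
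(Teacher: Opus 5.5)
Your proposal is correct and follows the paper's proof. Like the paper, you use integrality of \(c_fA_{\Kl}\) from Proposition~\ref{prop:degree-two-integral-pullback}, the M\"obius expansion with lattice-preserving operators \((\rho_2)_{T_M}[a_M,Z_M]\), Theorem~\ref{thm:main-petersson-formula}, a unit summand in the equality case, and \eqref{eq:degree-two-cf-standard-L-relation}. The invertibility of \((\rho_2)_{T_M}[a_M,Z_M]\) that you flag as the delicate step is not needed: an operator preserving \(V_{\rho_2,\calO_{L,\frakP}}\) maps \(\frakP V_{\rho_2,\calO_{L,\frakP}}\) into itself, so it cannot lower \(v_{\frakP}\), and a unit summand already forces \(v_{\frakP}(c_fA_{\Kl}(\gamma[M],T_M))=0\).
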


\begin{proof}
  Proposition~\ref{prop:degree-two-integral-pullback} gives
  \(
  c_fA_{\Kl}(\gamma',S)
  \in V_{\rho_2,\calO_{L,\frakP}}
  \)
  for every Fourier index \((\gamma',S)\),
  and the local condition on \((\gamma,T)\) gives the same integrality for
  \(c_fA_{\Kl}^{\prim}(\gamma,T)\).
  By \eqref{eq:degree-two-cf-standard-L-relation},
  the same assertions hold with \(c_f\) replaced by
  \(L^{\mathrm{alg}}(m-\tfrac12,f,\St)\).
  Equation~\eqref{eq:general-r-arithmetic-primitive} therefore gives the first assertion.

  If equality holds,
  \eqref{eq:general-r-arithmetic-primitive} and
  \eqref{eq:degree-two-cf-standard-L-relation} give
  \[
    v_{\frakP}\!\left(
    L^{\mathrm{alg}}(m-\tfrac12,f,\St)
    A_{\Kl}^{\prim}(\gamma,T)
    \right)
    =0.
  \]
  Expanding the primitive coefficient by M\"obius inversion,
  all summands are integral and every
  \((\rho_2)_{T_M}[a_M,Z_M]\) preserves the integral coefficient lattice at \(\frakP\).
  Hence at least one summand has valuation zero,
  which gives \eqref{eq:degree-two-klingen-nonvanishing}.
\end{proof}

\begin{rem}
  Suppose \(h_E=1\).
  Then the full-level degree-one space has a single adelic component.
  By the degree-one Fourier expansion in \cite[Lemma~18.7(2)]{Shimura1997EulerProducts},
  the constant term of the degree-one Eisenstein series used above is a \(\frakP\)-adic unit multiple of \(\zeta(3-2m)\),
  and its positive Fourier coefficients are \(\frakP\)-integral.
  Thus
  \(v_{\frakP}(\zeta(3-2m))=0\)
  implies
  \[
    \frakd_{\cusp}(1)
    =
    \calO_{L,\frakP}.
  \]
  If in addition \(f_0\) is not congruent modulo \(\frakP\) to any other normalized primitive eigenform of weight \(2w\),
  the integral Hecke projector onto the subspace spanned by \(f_0\) gives
  \[
    \frakA(f^\dagger)
    =
    \calO_{L,\frakP}.
  \]
\end{rem}

We next give a criterion involving only finitely many Fourier coefficients for the equality case in
Corollary~\ref{cor:degree-two-primitive-nonvanishing}.
Put \(d=w-m\), \(B=\left\lfloor w/6\right\rfloor\), and \(h=\dim S_{2w}(\SL_2(\bbZ))\).
For \(T=(t_{ab})\in\Her_2(E)_{>0}\) and \(x=(x_1,x_2)\in E^2\),
introduce indeterminates \(u_1,u_2,v_1,v_2\) and put
\[
  A_x=u_1\overline{x}_1+u_2\overline{x}_2,
  \qquad
  B_x=v_1x_1+v_2x_2,
\]
\[
  D_T=\sum_{a,b=1}^2u_at_{ab}v_b,
  \qquad
  C_{T,x}=T[x^*]D_T-A_xB_x.
\]
Define
\begin{equation}\label{eq:degree-two-integral-differential-polynomial}
  \widehat P_{m,w}(T,x)
  =
  \sum_{c=0}^{d}
  (-1)^c
  \binom dc
  \binom{m+w-2}{d-c}
  A_x^{d-c}B_x^{d-c}C_{T,x}^{c}.
\end{equation}
By Theorem~5.13 of \cite{Takeda2025DifferentialOperators},
the polynomial occurring in the differential operator is a nonzero scalar multiple of
\(\widehat P_{m,w}\),
and that scalar is a \(\frakP\)-adic unit when \(p>2w\).

Let \(T_1,\ldots,T_s\) be positive definite integral Hermitian matrices.
For \(1\le i\le s\) and \(1\le q\le B\),
write
\[
  A_{\Si}^{\prim}(1,T_i)
  \sum_{\substack{x\in\calO_E^2\\T_i[x^*]=q}}
  \widehat P_{m,w}(T_i,x)
  =
  \sum_{a,b=0}^{d}
  c_{q,i}(a,b)
  u_1^a u_2^{d-a}v_1^b v_2^{d-b}.
\]
We put
\[
  \calM(T_i)
  =
  \bigl(c_{q,i}(a,b)\bigr)_{
      \substack{1\le q\le B\\0\le a,b\le d}}
  \in
  M_{B,(d+1)^2}(L),
\]
and
\begin{equation}\label{eq:degree-two-general-finite-matrix}
  \calM(T_1,\ldots,T_s)
  =
  \bigl(
  \calM(T_1)
  \mid\cdots\mid
  \calM(T_s)
  \bigr)
  \in
  M_{B,s(d+1)^2}(L).
\end{equation}

\begin{lem}\label{lem:degree-two-rank-criterion}
  Assume that \(p>2w\), \(p\nmid D_E\), \(v_{\frakP}(\zeta(3-2m))=0\)
  and \(\frakd_{\cusp}(1)=\frakA(f^\dagger)=\calO_{L,\frakP}\).
  If
  \begin{equation}\label{eq:degree-two-general-rank-condition}
    \rank_{\calO_{L,\frakP}/\frakP}
    \bigl(
    \calM(T_1,\ldots,T_s)\bmod\frakP
    \bigr)
    =h,
  \end{equation}
  then
  \[
    v_{\frakP}\!\left(
    A_{\Si}^{\prim}(1,T_i)
    \frac{(\Phi_{1,T_i},f^\dagger)_{G_1}}
    {(f^\dagger,f^\dagger)_{G_1}}
    \right)
    =0
  \]
  for some \(1\le i\le s\).
\end{lem}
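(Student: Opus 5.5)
We argue by contradiction. Suppose that for every \(1\le i\le s\) the normalized quantity
\[
  x_i=A_{\Si}^{\prim}(1,T_i)\frac{(\Phi_{1,T_i},f^\dagger)_{G_1}}{(f^\dagger,f^\dagger)_{G_1}}
\]
has positive \(\frakP\)-adic valuation. By Corollary~\ref{cor:degree-two-primitive-nonvanishing}, each \(x_i\) is at least integral, so the assumption means \(x_i\equiv0\pmod\frakP\). The aim is to turn this into a linear dependence among the rows of \(\calM(T_1,\ldots,T_s)\bmod\frakP\), which contradicts \eqref{eq:degree-two-general-rank-condition}.

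\textbf{Step 1: reduce to a weight-\(2w\) elliptic form.} Since \(h_E=1\) is not assumed but the level is full, first reduce to \(\delta=1\) and identify \(\Phi_{1,T}\) with a \(V_{\sigma_2}\)-valued elliptic modular form of weight \(2w\) for \(\SL_2(\bbZ)\). Here \(\Phi_{1,T}=\Theta\cdot E\) is a product of the degree-one Hermitian theta series attached to \((\calO_E^2,T)\) with harmonic polynomial \(\widehat P_{m,w}(T,\cdot)\) and a degree-one Eisenstein series. The weights add up to \(2w\): the Eisenstein series has weight \(2m-2\) and the theta series has weight \(2+2d\). From \eqref{eq:general-r-Fourier-convolution}, the \(q\)-th Fourier coefficient of \(A_{\Si}^{\prim}(1,T)\Phi_{1,T}\) is
\[
  \sum_{q_1+q_2=q}A_{\Si}^{\prim}(1,T)B(q_1)A_E(q_2).
\]
Expanding in the basis \(u_1^au_2^{d-a}v_1^bv_2^{d-b}\) of \(V_{\sigma_2}\), up to the unit scalar relating the differential polynomial to \(\widehat P_{m,w}\), the coefficient for \(q\le B\) is a unitriangular combination of the entries \(c_{q',i}(a,b)\) with \(q'\le q\). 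This uses that the constant term of \(E\) is a unit multiple of \(\zeta(3-2m)\), and \(v_{\frakP}(\zeta(3-2m))=0\).

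\textbf{Step 2: project to cusp forms.} Apply the integral cuspidal projection \(\pr\), which is integral because \(\frakd_{\cusp}(1)=\calO_{L,\frakP}\). Each coordinate \(A_{\Si}^{\prim}(1,T_i)\Phi_{1,T_i}^{(a,b)}\) then gives an integral cusp form in \(\calS_1(\calO_{L,\frakP})\). The cusp forms of weight \(2w\) are determined, integrally and modulo \(\frakP\), by their first \(B\) coefficients via the Miller basis; this holds for \(p>2w\). Moreover, the non-cuspidal part is a multiple of the Eisenstein series \(E_{2w}\), whose effect on the first \(B\) coefficients is controlled modulo \(\frakP\).

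\textbf{Step 3: obtain the contradiction.} The functional \(h\mapsto (h,f^\dagger)/(f^\dagger,f^\dagger)\) is integral, since \(\frakA(f^\dagger)=\calO_{L,\frakP}\). On \(S_{2w}\otimes\bbF\) it is given by a nonzero vector \(\lambda\) pairing against the first \(B\) coefficients; this vector is nonzero because it takes the value \(1\) on \(f\). Each assumption \(x_i\equiv0\) says that \(\lambda\) annihilates the column block \(\calM(T_i)\) after the unitriangular change of Step 1. So \(\lambda^t\,\calM(T_1,\ldots,T_s)\equiv0\). Composing with the Miller-basis isomorphism, the column space of \(\calM\bmod\frakP\) lies in the \(h\)-dimensional space of reductions of cusp-form coefficient vectors. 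Hence a nonzero \(\lambda\) annihilating all columns forces \(\rank<h\), which contradicts \eqref{eq:degree-two-general-rank-condition}.

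The main obstacle is making Step 2 precise, namely that the pairing with \(f^\dagger\) is a functional of the first \(B\) coefficients modulo \(\frakP\) that is not killed by the projection. The first approach to try is to use the Hecke projector onto \(f_0\), which the \(\frakA(f^\dagger)\) hypothesis provides, to write the functional as an integral linear combination of \(a(1),\dots,a(B)\) via the Miller basis. Its nonvanishing modulo \(\frakP\) would then come from its value \(1\) on \(f\).
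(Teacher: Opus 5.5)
Your strategy is the paper's. The first $B$ Fourier coefficients of the coordinate forms $F_{i,a,b}$ of $A_{\Si}^{\prim}(1,T_i)\Phi_{1,T_i}$ come from the columns of $\calM(T_1,\ldots,T_s)$ via a lower-triangular matrix with unit diagonal, because $e(0)$ is a unit. The Sturm bound then turns the rank condition into spanning of the cuspidal space modulo $\frakP$, and $\lambda_f$ is nonzero modulo $\frakP$ because $\lambda_f(f^\dagger)=1$. Arguing by contradiction rather than directly changes nothing.

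One step is not justified as written. In Step~3 you use that the transformed columns of $\calM$ lie in the $h$-dimensional space $W$ of reductions of coefficient vectors of cusp forms. This requires the forms $F_{i,a,b}$ themselves to be cuspidal. Your Step~2 instead passes to $\pr(F_{i,a,b})=F_{i,a,b}-c_{i,a,b}E_{2w}$, whose first $B$ coefficients are no longer the columns of $\calM$. Saying the Eisenstein part is ``controlled modulo $\frakP$'' does not close this. Since $\lambda_f(\pr F)=\lambda_f(F)$, if the $c_{i,a,b}$ were nonzero, the coefficient vectors of the $F_{i,a,b}$ could span an $h$-dimensional space while those of the $\pr(F_{i,a,b})$ all lie in the $(h-1)$-dimensional kernel of $\lambda_f$ on $W$. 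No contradiction would then arise.

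The missing observation, which the paper uses, is that $d=w-m>0$ makes each $F_{i,a,b}$ a cusp form already. The constant term of the theta factor comes only from $X=0$, since $T>0$. Moreover $\widehat P_{m,w}(T,0)=0$, because each summand of \eqref{eq:degree-two-integral-differential-polynomial} contains either $A_x^{d-c}B_x^{d-c}$ with $d-c>0$ or $C_{T,x}^{d}$ with $d>0$, and $A_0=B_0=C_{T,0}=0$. Hence the constant term of the product of theta series and Eisenstein series vanishes, and at full level this is cuspidality. With this, Step~2 is unnecessary and your Step~3 goes through as written.

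A minor point: do not cite Corollary~\ref{cor:degree-two-primitive-nonvanishing} for the integrality of $x_i$. Its first assertion uses the local hypothesis at $p$ on $(\gamma,T)$, which this lemma does not assume. Integrality should instead come from the integrality of the $F_{i,a,b}$, which your reduction modulo $\frakP$ presupposes anyway, together with $\frakA(f^\dagger)=\calO_{L,\frakP}$.
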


\begin{proof}
  Write
  \[
    A_{\Si}^{\prim}(1,T_i)\Phi_{1,T_i}
    =
    \sum_{a,b=0}^{d}
    F_{i,a,b}
    u_1^a u_2^{d-a}v_1^b v_2^{d-b}.
  \]
  Since \(d>0\),
  each \(F_{i,a,b}\) is a cusp form in \(S_{2w}(\SL_2(\bbZ))\).
  Write the degree-one Eisenstein series occurring in
  \(\Phi_{1,T_i}\) as
  \[
    E^{(1)}(z)
    =
    \sum_{n\ge0}e(n)q^n.
  \]
  The assumption on \(\zeta(3-2m)\) gives
  \(e(0)\in\calO_{L,\frakP}^{\times}\).
  The first \(B\) Fourier coefficients of the forms \(F_{i,a,b}\)
  are therefore obtained from the columns of
  \(\calM(T_1,\ldots,T_s)\)
  by left multiplication by a lower-triangular matrix whose diagonal entries are \(\frakP\)-adic units.
  Hence \eqref{eq:degree-two-general-rank-condition} implies that the reductions of the \(F_{i,a,b}\) span
  \(S_{2w}(\SL_2(\bbZ))\) modulo \(\frakP\),
  by the Sturm bound.

  Since \(\frakA(f^\dagger)=\calO_{L,\frakP}\), the functional
  \[
    \lambda_f(h)
    =
    \frac{(h,f^\dagger)_{G_1}}
    {(f^\dagger,f^\dagger)_{G_1}}
  \]
  maps \(\calS_1(\calO_{L,\frakP})\) to \(\calO_{L,\frakP}\).
  Since \(f_0\) is normalized by \(a(1;f_0)=1\),
  the corresponding form \(f^\dagger\) belongs to \(\calS_1(\calO_{L,\frakP})\).
  Hence \(\lambda_f(f^\dagger)=1\),
  so the reduction of \(\lambda_f\) modulo \(\frakP\) is nonzero.
  Since the reductions of the \(F_{i,a,b}\) span the cuspidal space,
  \(\lambda_f(F_{i,a,b})\) is a \(\frakP\)-adic unit for at least one triple \((i,a,b)\).
  The corresponding coordinate of
  \[
    A_{\Si}^{\prim}(1,T_i)
    \frac{(\Phi_{1,T_i},f^\dagger)_{G_1}}
    {(f^\dagger,f^\dagger)_{G_1}}
  \]
  is therefore a \(\frakP\)-adic unit.
\end{proof}

\subsection{Examples}\label{subsec:harder-examples}

For the application to Harder's conjecture,
take \(m=j/2+2\) and \(w=k+j/2-1\).
Then \(2w=2k+j-2\) and \(d=w-m=k-3\).
Combined with Corollary~\ref{cor:degree-two-primitive-nonvanishing},
the rank criterion below verifies the Fourier-coefficient condition in \cite[Theorem~5.11]{katsurada2026harder} from finitely many Fourier coefficients.
We specialize to \(E=\bbQ(\sqrt{-3})\),
as in the numerical examples of \cite[Section~6]{katsurada2026harder}.
We choose \(\calC^{(2)}(1)\) so that it contains the identity element of \(G_2(\bbA_{\bbQ,\bfh})\),
and write \(1\) for this representative.
For an elliptic eigenform \(f_0\) of weight \(2w\),
the present full-level Klingen--Eisenstein series agrees with the lift denoted by \([f_0]^2_{2m}\) in \cite[Subsection~3.3]{katsurada2026harder},
where \(f_0\) is identified with its full-level Hermitian adelization on \(G_1\).
We use the algebraic normalization of the elliptic critical values from \cite[Section~2]{katsurada2026harder}.

Write \(\calO_E=\bbZ[\omega]\),
where \(\omega^2+\omega+1=0\),
and put \(T_1=I_2\) and \(T_2=\diag(1,2)\).
In this specialization,
\begin{equation}\label{eq:example-integral-differential-polynomial}
  \widehat P_{k,j}(T,x)
  =
  \sum_{c=0}^{d}
  (-1)^c
  \binom dc
  \binom{d+j+2}{d-c}
  A_x^{d-c}B_x^{d-c}C_{T,x}^{c}.
\end{equation}
By \cite[Theorem~5.13]{Takeda2025DifferentialOperators},
this differs from the normalized differential polynomial by a nonzero rational scalar whose numerator and denominator involve only primes at most \(2w\).
Hence the two normalizations differ by a \(p\)-adic unit whenever \(p>2w\).

Put \(N=(d+1)^2\).
For \(q\ge1\),
let \(\mathbf t_q(T)\in\bbZ^N\) denote,
in the monomial basis \(u_1^a u_2^{d-a}v_1^b v_2^{d-b}\) with \(0\le a,b\le d\),
the coordinate vector of the following polynomial.
\[
  \sum_{\substack{x\in\calO_E^2\\T[x^*]=q}}
  \widehat P_{k,j}(T,x).
\]

Let \(F_\ell(T,X)\) denote the normalized degree-two local Hermitian Siegel polynomial.
Its functional equation \cite[Corollary~3.2]{Ikeda2008Lifting} gives the following values.
\[
  F_3(T_1,X)=1-3^2X,
  \qquad
  F_2(T_2,X)=1-2^2X,
  \qquad
  F_3(T_2,X)=1+3^2X,
\]
and the remaining local factors are \(1\).
The poset \(\calP_1(T_1)\) consists only of \(\calO_E^2\).
For \(T_2\), there is no enlargement at \(2\), while at \(3\) there are exactly two larger lattices in \(\calP_1(T_2)\),
corresponding to the two isotropic lines of \(\diag(1,-1)\) over \(\bbF_3\).
Thus M\"obius inversion gives the following formulas.
\[
  A_{\Si}^{\prim}(1,T_1)
  \sim_{\frakP}
  1-3^{j+2},
  \qquad
  A_{\Si}^{\prim}(1,T_2)
  \sim_{\frakP}
  (1-2^{j+2})
  \left(1+3^{j+2}-2\cdot3^{j/2+2}\right).
\]
Put \(s_1(j)=1-3^{j+2}\) and \(s_2(j)=(1-2^{j+2})(1+3^{j+2}-2\cdot3^{j/2+2})\).

We put \(B=\lfloor w/6\rfloor\) and \(h=\dim S_{2w}(\SL_2(\bbZ))\).
Define \(\calM_{k,j}\) by the following formula.
\[
  \calM_{k,j}
  =
  \left(
  s_1(j)\mathbf t_q(T_1)
  \ \middle|\
  s_2(j)\mathbf t_q(T_2)
  \right)_{1\le q\le B}
  \in M_{B,2N}(\bbZ).
\]
Define \(\Delta_{k,j}\) by
\[
  \Delta_{k,j}
  =
  \gcd\left\{
  \det M
  \;\middle|\;
  M\text{ is an }h\times h\text{ minor of }\calM_{k,j}
  \right\},
\]
with \(\Delta_{k,j}=0\) if all such minors vanish.

\begin{cor}\label{cor:degree-two-Delta-criterion}
  Assume \(p>2w\),
  \(p\nmid D_E\),
  \(v_p(\zeta(-j-1))=0\),
  and \(\frakd_{\cusp}(1)=\frakA(f^\dagger)=\calO_{L,\frakP}\).
  If \(p\nmid\Delta_{k,j}\),
  then \(\rank_{\bbF_p}(\calM_{k,j}\bmod p)=h\),
  and for some \(i\in\{1,2\}\),
  \[
    v_{\frakP}\!\left(
    A_{\Si}^{\prim}(1,T_i)
    \frac{(\Phi_{1,T_i},f^\dagger)_{G_1}}
    {(f^\dagger,f^\dagger)_{G_1}}
    \right)
    =0.
  \]
\end{cor}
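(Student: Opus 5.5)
The plan is to reduce Corollary~\ref{cor:degree-two-Delta-criterion} to Lemma~\ref{lem:degree-two-rank-criterion} with \(s=2\), \(F=\bbQ\), \(\frakN=(1)\), \(m=j/2+2\), \(w=k+j/2-1\).

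\textbf{Step 1: the rank statement.} First I would show \(\rank_{\bbF_p}(\calM_{k,j}\bmod p)=h\). The matrix \(\calM_{k,j}\) has integer entries, since \(\widehat P_{k,j}\) has integer binomial coefficients and \(A_x,B_x,C_{T,x},T[x^*]\) lie in \(\calO_E[u,v]\). This needs a check: the coordinates lie in \(\calO_E\), and one must confirm they are rational, as implicitly asserted by \(\mathbf t_q(T)\in\bbZ^N\). Also \(s_1(j),s_2(j)\in\bbZ\).

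If \(p\nmid\Delta_{k,j}\), then some \(h\times h\) minor is not divisible by \(p\). Its reduction is therefore a nonzero \(h\times h\) minor of \(\calM_{k,j}\bmod p\), so the rank mod \(p\) is at least \(h\).

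For the upper bound, I would use that \(\calM_{k,j}\) has \(B\) rows. For even weight \(2w\), \(h=\dim S_{2w}(\SL_2(\bbZ))\le\lfloor 2w/12\rfloor=B\). Moreover the rank cannot exceed \(h\) at all. The Fourier-coefficient columns lie in the image of \(S_{2w}\) under the first \(B\) coefficients (Sturm bound), up to a triangular unit change. So the upper bound is automatic whenever the minor condition is meaningful. If \(h>B\), no \(h\times h\) minor exists and \(\Delta_{k,j}=0\), so \(p\mid\Delta_{k,j}\) and the hypothesis fails. Hence the rank equals \(h\).

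\textbf{Step 2: matching \(\calM_{k,j}\) with \(\calM(T_1,T_2)\).} Next I would identify \(\calM_{k,j}\) with the matrix \(\calM(T_1,T_2)\) of \eqref{eq:degree-two-general-finite-matrix} up to \(\frakP\)-adic units. There are two inputs.

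(a) By the paragraph after \eqref{eq:example-integral-differential-polynomial}, \(\widehat P_{k,j}\) equals \(\widehat P_{m,w}\). To see this, substitute \(m+w-2=d+j+2\) into \eqref{eq:degree-two-integral-differential-polynomial}, using \(d=k-3\).

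(b) The displayed formulas give \(A_{\Si}^{\prim}(1,T_i)\sim_{\frakP}s_i(j)\). So \(A_{\Si}^{\prim}(1,T_i)=\epsilon_i s_i(j)\) with \(\epsilon_i\in\calO_{L,\frakP}^\times\).

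Therefore the block \(\calM(T_i)\) equals \(\epsilon_i\) times the block \(s_i(j)(\mathbf t_q(T_i))_q\). Scaling each block by a unit does not change the rank mod \(\frakP\). Reduction mod \(\frakP\) of an integer matrix has the same rank over \(\calO_{L,\frakP}/\frakP\supseteq\bbF_p\) as over \(\bbF_p\). This yields \eqref{eq:degree-two-general-rank-condition} with \(s=2\).

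\textbf{Step 3: applying the lemma.} Finally, the hypothesis \(v_p(\zeta(-j-1))=0\) is exactly \(v_{\frakP}(\zeta(3-2m))=0\), since \(3-2m=-j-1\). The remaining hypotheses match those of Lemma~\ref{lem:degree-two-rank-criterion}, which gives the valuation-zero conclusion for some \(i\in\{1,2\}\).

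\textbf{Main obstacle.} The main obstacle is the bookkeeping in Step 2. I must confirm that the unit \(\sim_{\frakP}\) relation for \(A_{\Si}^{\prim}\) holds under \(p>2w\), which requires the Eisenstein normalization \(\bbL\) factors to be units. I must also confirm that the normalizations of \(\widehat P\) agree up to units. Both are asserted in the text preceding the corollary, so I would cite them rather than recompute.
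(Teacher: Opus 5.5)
Your proposal is correct and follows the paper's route: you reduce to Lemma~\ref{lem:degree-two-rank-criterion} with \(s=2\), using \(A_{\Si}^{\prim}(1,T_i)\sim_{\frakP}s_i(j)\), the agreement of \(\widehat P_{k,j}\) with \(\widehat P_{m,w}\) (hence with the normalized polynomial up to a unit), and \(3-2m=-j-1\). Your explicit argument for the upper bound \(\rank\le h\) is a detail the paper's proof leaves implicit; it uses the fact that, after a unit lower-triangular change, the columns are Fourier-coefficient vectors of \(\frakP\)-integral forms in \(S_{2w}(\SL_2(\bbZ))\).
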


\begin{proof}
  By the formulas above,
  the primitive Siegel coefficients for \(T_1\) and \(T_2\) differ from \(s_1(j)\) and \(s_2(j)\),
  respectively,
  by \(\frakP\)-adic units.
  The scalar relating \(\widehat P_{k,j}\) to the normalized differential polynomial is also a \(\frakP\)-adic unit because \(p>2w\),
  and \(3-2m=-j-1\) identifies the remaining hypothesis with the unit condition for the constant term of the degree-one Eisenstein series.
  Hence \(p\nmid\Delta_{k,j}\) implies that the matrix in Lemma~\ref{lem:degree-two-rank-criterion} has rank \(h\) modulo \(\frakP\),
  and the result follows.
\end{proof}

\begin{ex}\label{ex:7-12-73}
  Take \((k,j,p)=(7,12,73)\).
  Let
  \[
    f_0=q+a_0q^2+\cdots\in S_{24}(\SL_2(\bbZ))
  \]
  be the normalized primitive eigenform for which \(a_0\) is a root of \(X^2-1080X-20468736\).
  Put \(K_0=\bbQ(a_0)\), and \(\frakp=(73,a_0+2)\subset\calO_{K_0}\).
  Choose \(\frakP\) above \(\frakp\) in \(L\),
  and let \(f\) be the corresponding full-level degree-one Hermitian cusp form.
  Then \(m=8\), \(w=12\), \(d=4\), and \(B=h=2\).
  A direct computation gives
  \[
    \Delta_{7,12}=6912=2^8\cdot3^3.
  \]
  We have \(73>2w\), \(73\nmid D_E\), and
  \[
    \zeta(-13)=-\frac1{12}\in\bbZ_{(73)}^\times.
  \]
  The other normalized primitive eigenform in \(S_{24}(\SL_2(\bbZ))\) is the Galois conjugate of \(f_0\),
  whose \(q^2\)-coefficient is \(1080-a_0\).
  Modulo \(\frakp\),
  \[
    a_0\equiv71,
    \qquad
    1080-a_0\equiv60.
  \]
  Hence \(f_0\) is not congruent modulo \(\frakp\) to any other normalized primitive eigenform of weight \(24\).
  Since \(h_E=1\), the preceding remark gives
  \[
    \frakd_{\cusp}(1)
    =
    \frakA(f^\dagger)
    =
    \calO_{L,\frakP}.
  \]
  Therefore Corollary~\ref{cor:degree-two-Delta-criterion} gives an \(i\in\{1,2\}\) such that
  \[
    v_{\frakP}\!\left(
    A_{\Si}^{\prim}(1,T_i)
    \frac{(\Phi_{1,T_i},f^\dagger)_{G_1}}
    {(f^\dagger,f^\dagger)_{G_1}}
    \right)
    =0.
  \]
  The poset \(\calP_1(T_1)\) consists only of \(\calO_E^2\),
  and every \(M\in\calP_1(T_2)\) with \(M\ne\calO_E^2\) differs from \(\calO_E^2\) only at \(3\).
  Hence the local hypothesis at \(73\) in Corollary~\ref{cor:degree-two-primitive-nonvanishing} holds for both \(T_1\) and \(T_2\).
  Applying that corollary with \((\gamma,T)=(1,T_i)\), there exists \(M\in\calP_1(T_i)\).
  Put \(\gamma_0=\gamma[M]\), and \(S_0=T_M\),
  where \(\gamma=1\).
  Then
  \[
    v_{\frakP}\!\left(
    L^{\mathrm{alg}}(15/2,f,\St)
    A_{\Kl}(\gamma_0,S_0)
    \right)
    =0.
  \]
  By \cite[Remark~2.7]{katsurada2026harder},
  \[
    L^{\mathrm{alg}}(15/2,f,\St)
    \sim_{\frakP}
    L^{\mathrm{alg}}(19,f_0)
    L^{\mathrm{alg}}(19,f_0\otimes\chi_{E/\bbQ}).
  \]
  A computation with PARI/GP gives \(v_{\frakP}\!\left(
  L^{\mathrm{alg}}(19,f_0\otimes\chi_{E/\bbQ})
  \right)
  =0\).
  Hence
  \[
    L^{\mathrm{alg}}(15/2,f,\St)
    \sim_{\frakP}
    L^{\mathrm{alg}}(19,f_0),
  \]
  and therefore
  \[
    v_{\frakP}\!\left(
    L^{\mathrm{alg}}(19,f_0)
    A_{[f_0]^2_{16}}(\gamma_0,S_0)
    \right)
    =0.
  \]
\end{ex}

The condition \(p\nmid\Delta_{k,j}\) is sufficient,
but not necessary.

\begin{ex}\label{ex:4-24-97}
  Take \((k,j,p)=(4,24,97)\).
  Then \(m=14\),
  \(w=15\),
  \(d=1\),
  and \(B=h=2\).
  A direct computation gives
  \[
    \Delta_{4,24}
    =
    2^{10}\cdot3^3\cdot47\cdot97\cdot383\cdot2731\cdot8191\cdot181967.
  \]
  Thus \(97\mid\Delta_{4,24}\),
  and Corollary~\ref{cor:degree-two-Delta-criterion} does not apply.
  Let
  \[
    f_0=q+a_0q^2+\cdots\in S_{30}(\SL_2(\bbZ))
  \]
  be the normalized primitive eigenform with \(a_0=4320+96\sqrt{51349}\).
  Put \(K_0=\bbQ(\sqrt{51349})\),
  and \(\frakp=(97,\sqrt{51349}-6)\subset\calO_{K_0}\).
  Choose \(\frakP\) above \(\frakp\) in \(L\),
  and let \(f\) be the full-level Hermitian cusp form on \(G_1\) corresponding to \(f_0\).
  Modulo \(\frakp\),
  \[
    a_0\equiv46.
  \]
  Since \(\dim S_{30}(\SL_2(\bbZ))=2\),
  the other normalized primitive eigenform is the Galois conjugate of \(f_0\).
  Its \(q^2\)-coefficient satisfies
  \[
    4320-96\sqrt{51349}
    \equiv58\pmod{\frakp},
  \]
  so \(f_0\) is not congruent modulo \(\frakp\) to any other normalized primitive eigenform of weight \(30\).
  Moreover,
  \[
    \zeta(-25)
    =
    -\frac{8553103}{156}
    \in\bbZ_{(97)}^\times.
  \]
  Since \(h_E=1\),
  the preceding remark gives \(\frakd_{\cusp}(1)=\frakA(f^\dagger)=\calO_{L,\frakP}\).

  Let \(H\in S_{30}(\SL_2(\bbZ))\) be the \(u_1v_1\)-coordinate of \(\Phi_{1,I_2}\).
  Direct calculation gives
  \[
    H\equiv59q+95q^2\pmod{\frakp}.
  \]
  Since \(95\cdot59^{-1}\equiv46\pmod{97}\),
  the first two Fourier coefficients of \(H-59f_0\) vanish modulo \(\frakp\).
  The Sturm bound in weight \(30\) is \(2\),
  and hence
  \[
    H\equiv59f_0\pmod{\frakp}.
  \]
  Since \(\frakA(f^\dagger)=\calO_{L,\frakP}\), the map
  \[
    h\longmapsto
    \frac{(h,f^\dagger)_{G_1}}
    {(f^\dagger,f^\dagger)_{G_1}}
  \]
  is integral on the cuspidal lattice.
  Under this identification, the congruence above therefore gives
  \[
    \frac{(H,f^\dagger)_{G_1}}
    {(f^\dagger,f^\dagger)_{G_1}}
    \equiv59\pmod{\frakP}.
  \]
  Thus the \(u_1v_1\)-coordinate of \((\Phi_{1,I_2},f^\dagger)_{G_1}/(f^\dagger,f^\dagger)_{G_1}\) is a \(\frakP\)-adic unit.
  Moreover,
  \[
    A_{\Si}^{\prim}(1,I_2)
    \sim_{\frakP}
    1-3^{26}
    \equiv10\not\equiv0\pmod{\frakP}.
  \]
  Therefore
  \[
    v_{\frakP}\!\left(
    A_{\Si}^{\prim}(1,I_2)
    \frac{(\Phi_{1,I_2},f^\dagger)_{G_1}}
    {(f^\dagger,f^\dagger)_{G_1}}
    \right)
    =0.
  \]
  Since \(\calP_1(I_2)\) consists only of \(\calO_E^2\),
  Corollary~\ref{cor:degree-two-primitive-nonvanishing} therefore gives
  \[
    v_{\frakP}\!\left(
    L^{\mathrm{alg}}(27/2,f,\St)
    A_{[f_0]^2_{28}}(1,I_2)
    \right)
    =0.
  \]
  As in the preceding example, we have
  \[
    L^{\mathrm{alg}}(27/2,f,\St)
    \sim_{\frakP}
    L^{\mathrm{alg}}(28,f_0).
  \]
  Thus
  \[
    v_{\frakP}\!\left(
    L^{\mathrm{alg}}(28,f_0)
    A_{[f_0]^2_{28}}(1,I_2)
    \right)
    =0.
  \]
\end{ex}

\bibliographystyle{plain}
\bibliography{primitive_hermitian_klingen}

\end{document}